\documentclass[a4paper,10pt]{article}

\usepackage{amssymb}
\usepackage{fancyhdr}		
\fancyhead[L]{{\footnotesize\textsc{\leftmark}}}
\fancyhead[R]{{\footnotesize\textsc{\rightmark}}}
\setlength{\headheight}{15pt}
\widowpenalty=10000 
\clubpenalty=10000  
\pagestyle{fancy}
\fancyhead[L]{{\footnotesize\textsc{\leftmark}}}
\fancyhead[R]{{\footnotesize\textsc{\rightmark}}}
\setlength{\headheight}{15pt}

\newcommand{\fin}{\hfill$\square$}

\usepackage[T1]{fontenc}
\usepackage[latin1]{inputenc}
\usepackage[a4paper,textwidth=14cm,textheight=22cm]{geometry}
\usepackage{url}	
\usepackage{graphicx}	
\usepackage{amssymb}
\usepackage{amsmath}
\usepackage{amsthm}
\usepackage[T1]{fontenc}
\usepackage{ae}


\newtheorem{theo}{Theorem}[section]
\newtheorem{cor}[theo]{Corollary }
\newtheorem{lemm}[theo]{Lemma}

\newtheorem{defi}{Definition}[section]
\newtheorem{bei}{Example}[section]
\newtheorem{prop}[theo]{Proposition}

\newtheorem{rmq}[theo]{Remark}

\newtheorem{conj}{Conjecture}

\title{Asymptotic behavior of Cauchy hypersurfaces in constant curvature space-times}
\author{Mehdi Belraouti}
\begin{document}
\maketitle

\noindent{\bf Abstract.}
We study the asymptotic behavior of convex Cauchy hypersurfaces on maximal globally hyperbolic spatially compact space-times of constant curvature. We generalise the result of \cite{mehdi1} to the (2+1) de Sitter and anti de Sitter cases. We prove that in these cases the level sets of quasi-concave times converge in the Gromov equivariant topology, when time goes to $0$, to a real tree. Moreover, this limit does not depend on the choice of the time function. We also consider the problem of asymptotic behavior in the flat $(n+1)$ dimensional case. We prove that the level sets of quasi-concave times converge in the Gromov equivariant topology, when time goes to $0$, to a $CAT(0)$ metric space. Moreover, this limit  does not depend on the choice of the time function.

\tableofcontents

\section{Introduction}
Space-times of constant curvature occupy an important place in Lorentzian geometry.  Despite their trivial local geometry, these spaces have a very rich global geometry  and constitute an important family of space-times in which we  hope to understand many fundamental questions. The existence of time functions with levels of prescribed geometry constitutes one of these questions both from the geometrical and the physical point of view. We refer to these functions as geometric time functions. This question was amply studied in the literature in the works of Andersson, Barbot, B\'eguin, Benedetti, Bonsante, Fillastre, Galloway, Guadignini, Howard, Moncrief, Seppi, Zeghib (we cite for example \cite{anderson1}, \cite{anderson6}, \cite{anderson2}, \cite{BBZ1}, \cite{bonsante1}, \cite{BBZ2}, \cite{barbot1}, \cite{beneguadinini}, \cite{Fillastre1}, \cite{Seppi}). The main object of this article is to study the asymptotic behavior of geometric time functions levels. 

Recall that a Lorentzian manifold is a differentiable manifold endowed with a pseudo-Riemannian metric of signature $(-,+,...,+)$. A space-time is an oriented and chronologically oriented Lorentzian manifold. A space-time is said to be globally hyperbolic ($GH$) if it possesses a function, called Cauchy time function, which is strictly increasing along causal curves (curves for which the norms of the tangent vectors are non positive) and surjective on inextensible causal curves. The levels of such function are called Cauchy hypersurfaces. If in addition the Cauchy time function is proper then we say that the space-time is globally hyperbolic spatially compact and we write $GHC$. By a classical result of Geroch \cite{Geroch30}, every $GH$ space-time is diffeomorphic to the product of a Cauchy  hypersurface $S$ by an interval $I$ of $\mathbb{R}$. 
A globally hyperbolic spatially compact space-time, solution of the Einstein equation, is said to be maximal if it doesn't extend to a constant curvature $GHC$ space-time which is also solution of the Einstein equation. A maximal globally hyperbolic spatially compact space-time is denoted by $MGHC$. A space-time is said to be of constant curvature if it is endowed with a $(G,X)$ structure where $X$ is a constant model space and $G$ his isometry group. Recall that the models of constant curvature space-times are the Minkowski space $\mathbb{R}^{1,n}$, the anti de Sitter space $AdS_{n}$ and the de Sitter space $dS_{n}$.\\

In \cite{mess1}, Mess gives a full classification of $MGHC$ space-times in the $2+1$ flat and anti de Sitter cases giving rise in the same time  to a particular interest for $MGHC$ space-times of constant curvature. Following Mess work's  Scannell, Barbot, B\'eguin, Bonsante and Zeghib (\cite{scannell1}, \cite{barbot1},\cite{bonsante1}, \cite{BBZ1}) complete this classification in all constant curvature and all dimension cases. In the $2+1$ special case Mess \cite{mess1}, Benedetti and Bonsante \cite{benebonsan1} proved that there is a one to one correspondence between measured geodesic laminations on a given closed hyperbolic surface $S$ and $MGHC$ constant curvature space-times admitting a Cauchy surface diffeomorphic to $S$. 
\\

The $MGHC$ space-times of constant curvature have the particularity to possess remarkable geometric time functions:\\ 
1) The cosmological time, which is defined at a point $p$ as the supremum of length of past causal curves starting at $p$. It gives  a simple and important first example of quasi-concave time functions i.e those which the levels are convex, to which all other time functions can be compared (see \cite{bonsante1}, \cite{scannell1}).\\
2) The $CMC$ time function i.e a time function where the levels have constant mean curvature. The existence and uniqueness of such function in a given space-time was studied by Andersson, Barbot, Béguin and Zeghib in the flat, de Sitter and anti de Sitter cases \cite{bbz4}, \cite{bbz5}, \cite{BBZ1}. These functions  define  a regular foliation and play an important role in physics. In the flat case they have the particularity to be quasi-concave.\\
3) The $k$-time (dimension $2+1$) i.e a time function where the levels have constant Gauss curvature.  The existence and uniqueness of such function in a given space-time was done by Barbot, Béguin and Zeghib \cite{BBZ2}. They are by definition quasi-concaves.

Up to inversion of time orientation, these space-times have also  the particularity to be geodesically complete in the future (or in the past), but on the other hand often incomplete in the past (or in the future); we say that they admit an initial singularity. Giving a mathematical sense to this notion constitutes an important problem in general relativity (see \cite{Penrose12}, \cite{Hanwking14}, \cite{Hawking15}, \cite{Penrose13}, \cite{hawking}, \cite{oneill}). There are in the literature different ways to attach a boundary to a  space-time; we cite for example the Penrose boundary \cite{Penrose1}, the b-boundary \cite{Schmidt}. However, these constructions are not unique in general and all have disadvantages. We hope, through the study of  asymptotic behavior of Cauchy hypersurfaces, to give a more intrinsic meaning to this notion of initial singularity.

Let $M$ be a $MGHC$ space-time of constant curvature. A Cauchy time function $T:M\rightarrow \mathbb{R}$ defines naturally a $1$-parameter family  $(T^{-1}(a),g_{a})_{a\in \mathbb{R}}$ of Riemannian manifolds  or equivalently  a $1$-parameter family $(T^{-1}(a),d_{a})_{a\in \mathbb{R}}$ of metric spaces. One can ask the natural important question of asymptotic behavior of this family  with respect to the time in the following two cases: when time goes to $0$ and when it goes towards infinity. In our case we consider the equivalent equivariant problem: the asymptotic behavior of the $\pi_{1}(M)$-equivariant family $(\pi_{1}(M),\tilde{T}^{-1}(a),\tilde{d}_{a})_{a\in \mathbb{R}}$. Several notions of topology appear when we deal with the convergence of equivariant metric spaces. In this article our favorite convergences will be the compact open convergence and the Gromov equivariant convergence \cite{paulin1}, \cite{paulin2}.   

The study of such problem was first initiated by Benedetti-Guadagnini \cite{beneguadinini}. They noticed that the cosmological levels of $MGHC$ flat space-times converge, when time goes to $0$, to the real tree dual to the measured geodesic lamination associated to $M$. This problem was finally treated by Bonsante, Benedetti in \cite{bonsante1}, \cite{benebonsan1}. In the case of the $CMC$ time Benedetti-Guadagnini \cite{beneguadinini} conjectured that in a flat globally hyperbolic spatially compact non elementary maximal space-time $M$ of dimension $2+1$, the level sets of the $CMC$ time converge when time goes to $0$ to the real tree dual to the measured geodesic lamination associated to $M$ and when time goes to the infinity to the hyperbolic structure associated to $M$. In \cite{anderson3} Andersson gives a positive answer to the Benedetti-Guadagnini conjecture in the case of simplicial flat space-time. A complete positive answer to this conjecture is given in \cite{mehdi1}.

In the $2+1$ case, one can formulate the asymptotic problem in the Teichmüller space. Let $S$ be a closed hyperbolic surface and $M$ be a constant curvature $MGHC$ space-time admitting a Cauchy surface diffeomorphic to $S$. A Cauchy time function $T:M\rightarrow ]0,+\infty[$ defines naturally
a curve $(S,g^{T}_{a})_{a}$ in the space $\operatorname{Met}(S)$ of Riemannian metrics of $S$. This allows us to study the
behavior of the projection  curve $(S,[g^{T}_{a}])_{a}$  of $(S,g^{T}_{a})_{a}$ in the Teichmüller space $\operatorname{Teich}(S)$ which is, as a
topological space, much more pleasant to study than $\operatorname{Met}(S)$. In the flat case and  thanks to the work of Benedetti and Bonsante \cite{benebonsan1}, one can identify the curve $(S,[g^{T_{cos}}_{a}])_{a}$ in $\operatorname{Teich}(S)$ associated to the cosmological time $T_{cos}$. It corresponds to the grafting curve $(\operatorname{gra}_{\frac{\lambda}{a}}(S))_{a}$ defined by the measured geodesic lamination $(\lambda,\mu)$ associated to $M$. The curve $(\operatorname{gra}_{\frac{\lambda}{a}}(S))_{a}$ is real analytic and converges when
time goes to $0$ to the point, in the Thurston
boundary of the Teichmüller space $\operatorname{Teich}(S)$, corresponding to the measured geodesic lamination $(\lambda,\mu)$, and when time goes to $+\infty$, to the hyperbolic
structure $\mathbb{H}^{2}/\pi_{1}(M)$.

In the case of the $CMC$ time $T_{cmc}$, Moncrief \cite{Moncrief2} proved that the curve $(S,[g^{T}_{a}])_{a}$ is the projection in $\operatorname{Teich}(S)$ of a trajectory of an non-autonomous Hamiltonian flow on $T^{*}\operatorname{Teich}(S)$: we call this flow the Moncrief flow, and the curves  the Moncrief lines. It is natural to ask whether the curve defined by the $CMC$ time converges when
time goes to $0$ to the point, in the Thurston
boundary of the Teichmüller space $\operatorname{Teich}(S)$, corresponding to the measured geodesic lamination, and when time goes to $+\infty$, to the hyperbolic
structure $\mathbb{H}^{2}/\pi_{1}(M)$. One also can ask this question for the curve defined by the $k$-time.

\section{Backgrounds and statement of results}
\subsection{Generalities on geometric metric spaces}
Let $(X,d)$ be a metric space. The length $L_{d}(\alpha)$ of a path $\alpha:[a,b]\rightarrow X$ is defined to be the supremum, on finite subdivion of $[a,b]$, of $\sum d(\alpha(t_{i}),\alpha(t_{i+1}))$. 
 The length  distance $d_{L}(x,y)$ between two points $x$ and $y$ is the infimum of the length of paths  joining $x$ and $y$. The metric space $(X,d_{L})$ is then called a length metric space. A path $\alpha$ joining two points $x$ and $y$ is a geodesic of the length metric space $(X,d_{L})$ if $L_{d}(\alpha)=d_{L}(x,y)$. A length metric space such that every two points are joined by a geodesic is called geodesic metric space. 

Let $(X,d)$ be a geodesic metric space. Let $\Delta(x,y,z)$ be a geodesic triangle in $X$. A comparison triangle of $\Delta(x,y,z)$ in the model space $(\mathbb{R}^{2},d_{\mathbb{R}^{2}})$ is the unique (up to isometry) triangle $\bar{\Delta}(\bar{x},\bar{y},\bar{z})$ of $(\mathbb{R}^{2},d_{\mathbb{R}^{2}})$ such that $d(x,y)=d_{euc}(\bar{x},\bar{y})$, $d(y,z)=d_{euc}(\bar{y},\bar{z})$ and $d(x,z)=d_{euc}(\bar{x},\bar{z})$. The comparison map  from $\Delta(x,y,z)$ to $\bar{\Delta}(\bar{x},\bar{y},\bar{z})$ is the unique map which sends the points $x$, $y$, $z$ to the points $\bar{x}$, $\bar{y}$, $\bar{z}$ and the geodesic segments $[x,y]$, $[x,z]$, $[y,z]$ to the geodesic segments  $[\bar{x},\bar{y}]$, $[\bar{x},\bar{z}]$, $[\bar{y},\bar{z}]$.
\begin{defi}
A geodesic metric space $(X,d)$ is $\operatorname{CAT}(0)$ if every comparison map is $1$-Lipschitz. 
\end{defi} 

A length metric space $(X,d)$ is said to possess the \textit{approximative midpoints property} if: for every $x$, $y$ in $X$ and  $\epsilon>0$ there exists $z$ in  $X$ such that $d(x,z)\leq \frac{1}{2}d(x,y)+\epsilon$ and $d(y,z)\leq \frac{1}{2}d(x,y)+\epsilon$. The length metric space $X$ satisfies the \textit{$\operatorname{CAT}(0)$ $4$-points condition} if for any $4$-tuple of points $(x_{1},y_{1},x_{2},y_{2})$ there exists a $4$-tuple of points $(\bar{x}_{1},\bar{y}_{1},\bar{x}_{2},\bar{y}_{2})$  in $\mathbb{R}^{2}$ such that: $d(x_{i},y_{j})=d(\bar{x}_{i},\bar{y}_{j})$ for $i,j\in\left\{1,2\right\}$, and $d(x_{1},x_{2})\leq d(\bar{x}_{1},\bar{x}_{2})$, $d(y_{1},y_{2})\leq d(\bar{y}_{1},\bar{y}_{2})$. Note that a $\operatorname{CAT}(0)$ metric space  satisfies the \textit{$\operatorname{CAT}(0)$ $4$-points condition} and have the \textit{approximative midpoints property}. The converse is true in the complete case:

\begin{prop}{\em (\cite[Proposition~II.1.11]{bridson})}
\label{prop.cat000000}
Let $(X,d)$ be a complete metric space. The following conditions are equivalent: 
\begin{itemize}
\item1) $X$ is a $\operatorname{CAT}(0)$ metric space;
\item2) $X$ possesses the \textit{approximative midpoints property} and satisfies the \textit{$\operatorname{CAT}(0)$ $4$-points condition}.
\end{itemize}
\end{prop}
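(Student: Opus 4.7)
This equivalence is classical (Bridson--Haefliger \cite{bridson}); I sketch the approach I would take. The implication $(1)\Rightarrow(2)$ is essentially formal: a CAT(0) space is geodesic, so midpoints exist exactly and the approximative midpoints property is immediate. For the 4-points condition, given $(x_1,y_1,x_2,y_2)$, I would build the comparison 4-tuple by gluing two planar comparison triangles $\bar\Delta(\bar x_1,\bar x_2,\bar y_1)$ and $\bar\Delta(\bar x_1,\bar x_2,\bar y_2)$ along the common side $[\bar x_1,\bar x_2]$, with $\bar y_1$ and $\bar y_2$ on opposite sides. The distances $d(\bar x_i,\bar y_j)$ and $d(\bar x_1,\bar x_2)$ then match their counterparts in $X$ by construction, and the Aleksandrov subembedding inequality---a direct consequence of CAT(0) applied to the two triangles sharing the common side---yields $d(y_1,y_2)\leq d(\bar y_1,\bar y_2)$.

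For $(2)\Rightarrow(1)$ the principal task is to extract exact geodesics from approximate midpoints. Given $x,y\in X$, choose for each $n$ an approximate midpoint $z_n$ with error $\epsilon_n\to 0$. Applying the 4-points condition to the tuple $(z_n,x,z_m,y)$ produces a planar comparison configuration in which $\bar z_n$ and $\bar z_m$ are both $\epsilon$-close to being Euclidean midpoints of $\bar x,\bar y$. An elementary planar computation (essentially the parallelogram identity) shows that any such $\bar z$ lies within distance $O(\sqrt{\epsilon\, d(x,y)})$ of the true Euclidean midpoint, so $d(\bar z_n,\bar z_m)\to 0$; the 4-points inequality $d(z_n,z_m)\leq d(\bar z_n,\bar z_m)$ then forces $(z_n)$ to be Cauchy, and completeness produces an exact midpoint $m(x,y)$. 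Iterating dyadically and combining the bounds telescopically, one obtains midpoints at every dyadic parameter of $[0,1]$ and extends by uniform continuity to a geodesic $\gamma\colon[0,1]\to X$ joining $x$ and $y$.

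Once $X$ is known to be a geodesic space, the full CAT(0) comparison inequality follows from the 4-points condition via Reshetnyak's argument: given a geodesic triangle $\Delta(x,y,z)$ and points $p\in[x,y]$, $q\in[x,z]$, one applies the 4-points condition to well-chosen quadruples involving $p,q$ and the vertices, and deduces $d(p,q)\leq d_{euc}(\bar p,\bar q)$ from planar geometry. I expect the main obstacle to be Step~1 above---upgrading approximative midpoints to exact ones---since that is where completeness, the approximation hypothesis, and the 4-points condition must be combined simultaneously, and where careless Euclidean estimates would fail to yield the Cauchy property of $(z_n)$.
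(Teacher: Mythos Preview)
The paper does not give its own proof of this proposition; it simply records it as a citation of \cite[Proposition~II.1.11]{bridson}. Your sketch follows the standard Bridson--Haefliger argument and is correct in outline, so there is nothing to compare against here.
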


A geodesic metric space $(X,d)$ is a real tree if any two points are joined by a unique path. Clearly a real tree is a $\operatorname{CAT}(0)$ metric space. An important example of real tree is the one given by a measured geodesic lamination (see for example \cite{MORGAN}, \cite{Otal}).
\subsection{Flat Regular domain, initial Singularity and Horizon}
Let $\mathbb{R}^{1,n}$ be the Minkowski space. An hyperplane $P$ is said to be lightlike if it is orthogonal to a lightlike direction. Let $\mathfrak{P}$ be the space of all lightlike hyperplanes in $\mathbb{R}^{1,n}$. Let $\Lambda$ be a closed subset of $\mathfrak{P}$ and consider $\Omega:=\bigcap_{P\in\Lambda} I^{+}(P)$. By \cite{barbot1}, the subset $\Omega$ is an open convex domain of $\mathbb{R}^{1,n}$. It is non empty as soon as $\Lambda$ is compact.
If $\Lambda$ contains more than two elements, then the open convex domain $\Omega$, if not empty, is called a future complete regular domain. In the same way one can define a past complete regular domain.

Let $\Omega$ be a future complete regular domain. The boundary $\partial\Omega$ of $\Omega$ is convex. By \cite{bonsante1}, it is the graph of a $1$-Lipschitz convex function $f:\mathbb{R}^{n}\rightarrow \mathbb{R}$.

Let $\mathfrak{L}$ be the set of Lipschitz curves contained in $\partial\Omega$. For every $\alpha\in \mathfrak{L}$, consider $l(\alpha):=\int\left|\dot{\alpha}(t)\right|dt$ the Lorentzian length of $\alpha$. Let $d$ be the pseudo-distance defined on $\partial\Omega$ by: $$d_{\partial \Omega}(p,q)=\inf\left\{l(\alpha),\mbox{~where~} \alpha \mbox{~is a curve in $\mathfrak{L}$ joining p and q~}\right\}.$$
The cleaning $(\partial \Omega/\sim, \bar{d}_{\partial \Omega})$ i.e the quotient of the pseudo metric space $(\partial \Omega^{+}, d_{\partial \Omega})$ by the equivalence relation $p\sim q$ if and only if $d_{\partial \Omega}(p,q)=0$, is a length metric space (see for instance \cite[Corollaire~2.2.14]{mehdi2} ).
\begin{defi}
The metric space $(\partial \Omega/\sim, \bar{d}_{\partial \Omega})$ is the Horizon associated to $\Omega$.
\end{defi}

An hyperplane $P$ is a support hyperplane of $\Omega$ if $\Omega\subset J^{+}(P)$. Note that if $\Omega$ admits two lightlike support hyperplanes then it admits a spacelike support hyperplane. Let $\Sigma$ be the set of points $p\in\partial \Omega$ such that $\Omega$ have a  spacelike support hyperplane passing through $p$. By a result of Bonsante (see \cite[Proposition~7.8]{bonsante1} ), the restriction of the pseudo-distance  $ d_{\partial \Omega}$ to $\Sigma$ is a distance denoted by $d_{\Sigma}$. 
\begin{defi}
The metric space $(\Sigma,d_{\Sigma})$ is  the initial Singularity associated to $\Omega$.
\end{defi}
\begin{bei}
The solid cone $\mathbf{C}$ is a typical example of regular domain. In this case the metric spaces $(\partial \mathbf{C}/\sim, \bar{d}_{\partial \mathbf{C}})$ and $(\Sigma,d_{\Sigma})$ are identified with the trivial metric space $(\{0\},d=0)$.
\end{bei}

In \cite{bonsante1}, Bonsante shows that to each point $p$ in $\Omega$ corresponds a unique point $r(p)$ in $\partial \Omega$ realizing the cosmological time i.e such that $T_{cos}(p)=\left|p-r(p)\right|^{2}$. He proved also that the application $r:\Omega\rightarrow \partial \Omega$, called retraction map, is continuous and that $r(\Omega)=\Sigma$. Moreover, the cosmological time $T_{cos}$ of $\Omega$ is a $C^{1,1}$ regular Cauchy time whose Lorentzian gradient is given by $N_{p}=-\nabla_{p} T_{cos}=\frac{1}{T_{cos}(p)}(p-r(p))$. By \cite[Lemma~4.15]{bonsante1} and \cite[Lemma~3.12, Corollary~4.5]{bonsante1}, the normal application $N:\Omega\rightarrow \mathbb{H}^{n}$, when restricted to each cosmological level $S^{T_{cos}}_{a}$, is a surjective proper function. Every point $p$ in $\Omega$ can be decomposed as $$p=r(p)+T_{cos}(p)N_{p}.$$
Actually all this remain true in any future complete convex domain of $\mathbb{R}^{1,n}$.

\subsection{Flat space-times and flat regular domains}
Let $\Gamma$ be a torsion free uniform lattice of $SO^{+}(1,n)$. A cocycle of $\Gamma$ is an application $\tau:\Gamma\rightarrow \mathbb{R}^{1,n}$ such that $\tau(\gamma_{1}.\gamma_{2})=\gamma_{1}\tau(\gamma_{2})+\tau(\gamma_{1})$. An affine  deformation of $\Gamma$ associated to $\tau$ is the morphism $\rho_{\tau}:\Gamma\rightarrow SO^{+}(1,n)\ltimes\mathbb{R}^{1,n}$ defined by  $\rho_{\tau}(\gamma).x=\gamma.x+\tau(\gamma)$ for every $\gamma\in \Gamma$ and $x\in\mathbb{R}^{1,n}$. By a result of Bonsante \cite{bonsante1}, to every affine deformation of $\Gamma$ corresponds a unique (up to reorientation) flat future complete regular domain $\Omega$ on which $\Gamma_{\tau}=\rho_{\tau}(\Gamma)$ acts freely properly discontinuously. In this case, the cosmological normal application $N$ and the retraction map $r$ of $\Omega$ are equivariant under the action of $\Gamma$. This means that $N_{\gamma_{\tau}.p}=\gamma.N_{p}$ and $r(\gamma_{\tau}.p)=\gamma_{\tau}.r(p)$ for every $p$ in $\Omega$ and $\gamma$ in $\Gamma$. The space-time $M_{[\tau]}:=\Omega/\Gamma_{\tau}$ is then called a standard flat space-time. In the special case of the trivial cocycle the space-time $M_{[0]}:=\mathbf{C}/\Gamma$ is the static flat space-time.

A future complete $MGHC$ flat space-time $M$ is said to be non elementary if $L(\pi_{1}(M))$ is a non elementary subgroup of $SO^{+}(1,n)$, where $L:\pi_{1}(M)\rightarrow SO^{+}(1,n)$ is the linear part of the holonomy morphism $\rho:\pi_{1}(M)\rightarrow SO^{+}(1,n)\ltimes\mathbb{R}^{1,n}$ of $M$. The following theorem gives a full classification of $MGHC$ flat non elementary space-times.
\begin{theo}{\em (\cite[Theorem~4.11]{barbot1})}
Every future complete $MGHC$ flat non elementary space-time $M$ is up to finite cover the quotient of a future  complete regular domain by a discrete subgroup of $SO^{+}(1,n)\ltimes\mathbb{R}^{1,n}$.
\end{theo}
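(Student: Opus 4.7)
The plan is to exploit the $(G,X)$-structure of $M$, where $X = \mathbb{R}^{1,n}$ and $G = SO^+(1,n) \ltimes \mathbb{R}^{1,n}$. Let $D: \tilde M \to \mathbb{R}^{1,n}$ be the developing map and $\rho: \pi_1(M) \to G$ its holonomy morphism, with linear part $L$. The goal is to identify $D(\tilde M)$ with a future complete regular domain $\Omega$, and to show that after passing to a finite index subgroup (hence to a finite cover of $M$) the linear holonomy $L(\pi_1(M))$ becomes a torsion-free uniform lattice of $SO^+(1,n)$.

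First I would establish that $D$ is a diffeomorphism of $\tilde M$ onto a convex open subset $\Omega$ of $\mathbb{R}^{1,n}$. Using global hyperbolicity, one picks a smooth spacelike Cauchy hypersurface $S$ in $M$ and lifts it to a $\pi_1(M)$-invariant Cauchy hypersurface $\tilde S$ in $\tilde M$. A Mess-type convexity argument, approximating $\tilde S$ by convex spacelike hypersurfaces in Minkowski space, each of which bounds a convex Cauchy development, shows that $D$ is globally injective and that $\Omega := D(\tilde M)$ is convex. One then orients time so that $\Omega$ is future complete.

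Next I would pin down the structure of $L(\pi_1(M))$. Restricting the Gauss map of a convex equivariant spacelike hypersurface in $\Omega$ gives a proper equivariant map to $\mathbb{H}^n$, in the spirit of the cosmological normal application $N$ described above. Properness, combined with spatial compactness of $M$, forces $L(\pi_1(M))$ to act properly discontinuously and cocompactly on $\mathbb{H}^n$, so it is a uniform lattice in $SO^+(1,n)$; non-elementarity ensures the action is faithful modulo a finite kernel. Selberg's lemma then produces a torsion-free finite index subgroup, and the corresponding finite cover of $M$ has the required holonomy.

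Finally I would identify $\Omega$ as a future complete regular domain. Every support hyperplane of the convex set $\Omega$ must be either spacelike or lightlike, since a timelike support hyperplane would contradict future completeness of $\Omega$. Let $\Lambda$ be the set of lightlike support hyperplanes; it is closed and invariant under $\rho(\pi_1(M))$. Non-elementarity of $L(\pi_1(M))$ forces $\Lambda$ to contain more than two elements, and an extremal-point argument on $\partial \Omega$ shows $\Omega = \bigcap_{P \in \Lambda} I^+(P)$. The main obstacle is the first step: proving that the developing map is an embedding with convex image. This is precisely the delicate part of the Mess--Barbot analysis, requiring one to propagate convexity from a spacelike Cauchy hypersurface to its full Cauchy development in a possibly geodesically incomplete flat manifold.
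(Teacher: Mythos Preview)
The paper does not prove this theorem: it is quoted verbatim from \cite[Theorem~4.11]{barbot1} as a background classification result, with no argument given here. So there is no ``paper's own proof'' to compare against.

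That said, your outline is a fair high-level sketch of the Barbot--Mess strategy, and you correctly identify the crux: showing the developing map is a global embedding onto a convex domain. A few remarks. First, your step~1 as written is too vague to count as a proof: the phrase ``Mess-type convexity argument, approximating $\tilde S$ by convex spacelike hypersurfaces'' does not explain why an arbitrary smooth Cauchy hypersurface can be replaced by a convex one, nor why convexity of one slice propagates to injectivity of $D$ on the whole of $\tilde M$. In Barbot's actual argument this goes through the cosmological time of the image domain and a maximality argument, not through approximation by convex hypersurfaces. Second, in step~3, the assertion that non-elementarity of $L(\pi_1(M))$ forces $\Lambda$ to contain more than two lightlike support hyperplanes is correct in spirit but needs justification: one must rule out the possibility that $\Omega$ is bounded only by spacelike hyperplanes (which would force $\Omega$ to be a translate of the solid cone, contradicting the presence of a nontrivial cocycle only after further work). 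Third, your step~2 is essentially right, though the Gauss map argument you invoke is cleaner once one already knows $\Omega$ is a regular domain and can use the cosmological normal map $N$ directly, as the paper does just before this theorem.

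In short: the architecture is right, but the load-bearing step (injectivity and convexity of the developing image) is asserted rather than argued, and you yourself flag this. Since the present paper takes the theorem as input from \cite{barbot1}, a full proof is not expected here.
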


\subsection{(2+1)-de Sitter space-times} 
Let $S$ be a simply connected Möbius manifold. That is a manifold equipped with a $(G,X)$-structure, where $G=O^{+}(1,n)$ and $X=\mathbb{S}^{n}$ is the Riemannian sphere. A  Möbius manifold is elliptic (respectively parabolic) if it is conformally equivalent to $\mathbb{S}^{n}$ (respectively  $\mathbb{S}^{n}$ minus a point). A non elliptic neither parabolic Möbius manifold is called hyperbolic Möbius manifold.

Let $d:S\rightarrow \mathbb{S}^{n}$ be a developing map of $S$. A round ball of $S$ is an open convex set $U$ of $S$ on which $d$ is an homeomorphism. It is said to be proper if $d(\bar{U})$ is a closed round ball of $\mathbb{S}^{n}$. Let $B(S)$ be the space of proper round ball of $S$. By a result of \cite{BBZ1}, there is a natural topology on $B(S)$ making it locally homeomorphic to $\mathbb{D}\mathbb{S}_{n+1}$. By \cite{BBZ1}, the space $B(S)$ endowed with the pull back metric of $\mathbb{D}\mathbb{S}_{n+1}$ is a simply connected future complete globally hyperbolic locally de Sitter space-time called $dS$-standard space-time.

In general $B(S)$ is not isometric to a part of $\mathbb{D}\mathbb{S}_{n+1}$. However, there are some regions in $B(S)$ which embedd isometrically in $\mathbb{D}\mathbb{S}_{n+1}$. Indeed, let $x$ in $S$ and let $U(x)$ be the union of all round containing $x$. Then by \cite{BBZ1}, the $dS$-standard spacetime $B(U(x))$ is isometric to an open domain of  $\mathbb{D}\mathbb{S}_{n+1}$. Moreover, for every proper round ball $V$ containing $x$, the causal past of $V$ in $B(S)$ is contained in $B(U(x))$. 

In the case of $dS$-standard space-time of hyperbolic type, the cosmological time  is regular (see \cite{BBZ1}). One can attach to each hyperbolic type $dS$-standard space-time $B(S)$ a past boundary $\partial B(S)$, which can be seen locally as a convex hypersurface of $\mathbb{D}\mathbb{S}_{n+1}$. Moreover, to every point $p$ in $B(S)$ corresponds a unique point $r(p)$ on $\partial B(S)$ realizing the cosmological time. Actually the point $r(p)$ is the limit point in $B(S)\cup\partial B(S)$ of the past timelike geodesique starting at $p$ with initial velocity $-N_{p}$, where $N_{p}$ is the future oriented cosmological normal vector at $p$. The application $N$ is the cosmological normal application and $r$ is the retraction map.

We have the following classification theorem:
\begin{theo}{\em (\cite[Theorem~1.1]{scannell1})}
Every $MGHC$ future complete de Sitter space-time is, up to reorientation, the quotient of a standard $dS$ space-time by a free torsion discret subgroup of $SO^{+}(1,n+1)$.
\end{theo}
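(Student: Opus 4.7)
The plan is to reconstruct a M\"obius manifold $S$ from the given $MGHC$ future complete de Sitter space-time $M$ and then identify its universal cover $\tilde{M}$ with the standard $dS$ space-time $B(S)$ in a $\pi_{1}(M)$-equivariant fashion. The key geometric input is the classical duality between points of $\mathbb{D}\mathbb{S}_{n+1}$ and proper round balls of its conformal sphere $\mathbb{S}^{n}$: each $q\in\mathbb{D}\mathbb{S}_{n+1}$ corresponds to the round ball cut out by the past lightcone of $q$ on the ideal boundary.

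First I would lift to the universal cover $\tilde{M}$ and fix a developing map $D:\tilde{M}\to \mathbb{D}\mathbb{S}_{n+1}$ together with a holonomy representation $\rho:\pi_{1}(M)\to SO^{+}(1,n+1)$. I would then build the M\"obius manifold $S$ from past-inextendible null geodesics in $\tilde{M}$: around any $p\in \tilde{M}$, the space of past null directions at $p$ is a round sphere, and $D$ identifies it with a small neighborhood in $\mathbb{S}^{n}$. Using global hyperbolicity to ensure these local charts patch coherently, one obtains a genuine M\"obius atlas on $S$, and the holonomy $\rho$ descends through the natural projection $SO^{+}(1,n+1)\to O^{+}(1,n+1)$ to a M\"obius holonomy on $S$.

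Next I would define $\Phi:\tilde{M}\to B(S)$ by assigning to each $p\in\tilde{M}$ the proper round ball $U_{p}\subset S$ swept out by the past null geodesics through $p$. I would then verify: (i) $\pi_{1}(M)$-equivariance, which is automatic from the construction; (ii) $\Phi$ is a local isometry, using the fact recalled in the excerpt that around each $x\in S$ the region $B(U(x))$ embeds isometrically in $\mathbb{D}\mathbb{S}_{n+1}$, so that on the preimage of such a neighborhood $\Phi$ is locally conjugate to $D$; and (iii) $\Phi$ is injective, since two distinct points with identical past null cones would be joined by a timelike segment, contradicting chronology (equivalently, global hyperbolicity).

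Finally, to upgrade $\Phi$ to a global isometry I would appeal to maximality. The image $\Phi(\tilde{M})$ is open by (ii), and one checks it is causally convex in $B(S)$, so it descends to an isometric embedding of $M$ into the future complete $GHC$ space-time $B(S)/\rho(\pi_{1}(M))$. Maximality of $M$ then forces this embedding to be onto, yielding $\tilde{M}\cong B(S)$ and $M\cong B(S)/\rho(\pi_{1}(M))$; freeness and discreteness of $\rho(\pi_{1}(M))$ transfer from the free, properly discontinuous action on $\tilde{M}$, while torsion-freeness follows because any torsion element would have a fixed point in $B(S)$ and hence in $\tilde{M}$. The main obstacle is the surjectivity step: one must verify that $B(S)$ is itself globally hyperbolic and maximal among de Sitter $GHC$ extensions of its causally convex subsets, so that no genuine extension of $\Phi(\tilde{M})$ could exist without contradicting the $MGHC$ hypothesis on $M$. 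This is essentially the content of the de Sitter standard-domain theory recalled in the excerpt, and all the other steps (the M\"obius patching, the local isometry, the injectivity) are comparatively routine consequences of the $(G,X)$-structure formalism and of global hyperbolicity.
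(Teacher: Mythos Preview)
The paper does not prove this statement; it is quoted as a classification result from Scannell \cite{scannell1} (and, for related versions, from \cite{BBZ1}), with no argument supplied. So there is no ``paper's own proof'' to compare your proposal against.

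That said, your sketch follows the standard outline of Scannell's argument: build the M\"obius manifold $S$ from the conformal boundary data (past null geodesics), use the duality between points of de Sitter space and round balls of the ideal sphere to define the map $\Phi:\tilde M\to B(S)$, check it is a local isometry and injective, and invoke maximality to get surjectivity. The one point you flag yourself---that $B(S)$ is globally hyperbolic and maximal among $GHC$ de Sitter extensions---is indeed the substantive step, and it is exactly what Scannell establishes. So your outline is a reasonable reconstruction of the cited proof, but it is not something the present paper undertakes.
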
  

\begin{defi}
Let $M$ be a differentiable manifold endowed with two Lorentzian metrics $g$ and $\mathfrak{g}$. Let $\xi$ be a vector fields everywhere non zero. The Lorentzian metric $\mathfrak{g}$ is obtained by a Wick rotation from the Lorentzian metric $g$ along the vector fields $\xi$ if: 
\begin{itemize}
\item 1) For every $p$ in $M$, the sub-spaces $g$-orthogonal and $\mathfrak{g}$-orthogonal to $\xi_{p}$ are the same;

\item 2) there exists a positive function $f$ such that $\mathfrak{g}=f g$ on the sub-space spanned by $\xi_{p}$;

\item 3) There exists a positive function $h$ such that : $\mathfrak{g}=hg$ on $ \xi_{p}^{\perp}$.
\end{itemize}
\end{defi}

Let $\Omega$ be a flat future complete regular domain of dimension $2+1$. Consider $\Omega_{1}$ the past in $\Omega$ of the cosmological level $S^{T_{cos}}_{1}$ and $g$  its induced Lorentzian metric. By \cite{benebonsan1}, there exists a $C^{1}$ local diffeomorphism $\Hat{D}:\Omega_{1}\rightarrow \mathbb{D}\mathbb{S}_{3}$ such that the pullback by $\Hat{D}$ of the de Sitter metric is the Lorentzian metric $\mathfrak{g}$ obtained from $g$ by a Wick rotation along the cosmological gradient with
$\mathfrak{g}=\frac{1}{(1-T_{cos}^{2})^{2}}g$ on $\mathbb{R}\xi_{T_{cos}}$ and $\mathfrak{g}=\frac{1}{1-T_{cos}^{2}}g$ on $\left\langle \xi_{T_{cos}}\right\rangle^{\perp}$. The space $(\Omega_{1},\mathfrak{g})$ is a $dS$-standard spacetime of hyperbolic type i.e associated to some hyperbolic projective structure (given also by the canonical Wick rotation) on $S^{T_{cos}}_{1}$. In fact, this Wick rotation provides us a one to one correspondence between standard $2+1$ de Sitter space-times of hyperbolic type and flat future complete regular domains of dimension $2+1$. Moreover, this construction can be done in an equivariant way giving hence a one to one correspondence between future complete flat $MGHC$ non elementary space-times of dimension $2+1$ and future complete $MGHC$ de Sitter space-times of hyperbolic type of dimension $2+1$.
\begin{prop}{\em (\cite[Proposition~5.2.1]{benebonsan1})}
The cosmological time $\mathcal{T}_{cos}$ of $(\Omega_{1},\mathfrak{g})$ is a Cauchy time. Moreover, $$\mathcal{T}_{cos}=\operatorname{argth}(T_{cos}),$$ where $T_{cos}$ is the cosmological time of $(\Omega_{1},g)$.
\end{prop}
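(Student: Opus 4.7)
The plan is to prove the identity $\mathcal{T}_{cos}=\operatorname{argth}(T_{cos})$ first, by a two-sided pointwise comparison, and then to deduce the Cauchy property from it together with the regularity of the cosmological time on hyperbolic $dS$-standard space-times recalled earlier from \cite{BBZ1}. Set $\xi:=\nabla^{g}T_{cos}=-N$, which is $g$-past-directed unit timelike. Writing any tangent vector as $v=a\xi+w$ with $w\in\xi^{\perp_g}$, the Wick rotation formula gives
$$g(v,v)=-a^{2}+g(w,w),\qquad \mathfrak{g}(v,v)=-\frac{a^{2}}{(1-T_{cos}^{2})^{2}}+\frac{g(w,w)}{1-T_{cos}^{2}}.$$
Since $0\leq T_{cos}<1$ on $\Omega_{1}$, one immediately reads off that the $\mathfrak{g}$-causal cone contains the $g$-causal cone, and that the past cones of $g$ and $\mathfrak{g}$ are both characterized by $a\geq 0$.

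For the lower bound, fix $p\in\Omega_{1}$ and follow the integral curve $\gamma$ of $\xi$ from $p$, parametrized by $g$-arc length; it is a past $g$-timelike geodesic ending at $r(p)\in\partial\Omega$, along which $T_{cos}(\gamma(s))=T_{cos}(p)-s$. Being $g$-causal it is $\mathfrak{g}$-causal, and its $\mathfrak{g}$-length is
$$L_{\mathfrak{g}}(\gamma)=\int_{0}^{T_{cos}(p)}\frac{ds}{1-(T_{cos}(p)-s)^{2}}=\operatorname{argth}(T_{cos}(p)),$$
yielding $\mathcal{T}_{cos}(p)\geq\operatorname{argth}(T_{cos}(p))$. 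For the matching upper bound, let $\eta$ be any past $\mathfrak{g}$-causal curve from $p$. The decomposition $\dot\eta=a\xi+w$ satisfies $a\geq 0$ and $g(w,w)\leq a^{2}/(1-T_{cos}^{2})$, so
$$\sqrt{-\mathfrak{g}(\dot\eta,\dot\eta)}=\sqrt{\frac{a^{2}}{(1-T_{cos}^{2})^{2}}-\frac{g(w,w)}{1-T_{cos}^{2}}}\leq\frac{a}{1-T_{cos}^{2}}=\frac{-dT_{cos}(\dot\eta)}{1-T_{cos}^{2}}.$$
Integrating, with $T_{cos}$ nonincreasing along $\eta$ and staying in $(0,1)$,
$$L_{\mathfrak{g}}(\eta)\leq\operatorname{argth}(T_{cos}(p))-\operatorname{argth}(T_{cos}(\eta_{\mathrm{end}}))\leq\operatorname{argth}(T_{cos}(p)),$$
so taking the supremum gives $\mathcal{T}_{cos}(p)\leq\operatorname{argth}(T_{cos}(p))$, completing the equality.

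With the identity in hand, the Cauchy property is essentially automatic: the level sets of $\mathcal{T}_{cos}$ coincide with the Cauchy levels $S^{T_{cos}}_{a}$ of $T_{cos}$, and they are $\mathfrak{g}$-spacelike because $\mathfrak{g}|_{\xi^{\perp}}$ is Riemannian; moreover $T_{cos}$ is strictly $\mathfrak{g}$-increasing along $\mathfrak{g}$-future-causal curves since a direct computation gives $\nabla^{\mathfrak{g}}T_{cos}=(1-T_{cos}^{2})^{2}\,\xi$, which is $\mathfrak{g}$-past-directed timelike. The delicate point, and the main obstacle I anticipate, is that the $\mathfrak{g}$-causal cone is strictly wider than the $g$-one, so new inextensible $\mathfrak{g}$-causal curves might a priori fail to cross a level. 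I would close this gap by invoking the regularity of the cosmological time on hyperbolic $dS$-standard space-times (from \cite{BBZ1}) together with the pointwise inequality above, which forces the $T_{cos}$-values along any inextensible $\mathfrak{g}$-causal curve to exhaust the full interval $(0,1)$, and hence $\mathcal{T}_{cos}=\operatorname{argth}(T_{cos})$ to exhaust $(0,+\infty)$ as required.
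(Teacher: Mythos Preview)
The paper does not prove this proposition; it is quoted verbatim from \cite[Proposition~5.2.1]{benebonsan1} with no argument supplied. So there is no ``paper's proof'' to compare against, and your task reduces to giving a self-contained justification.

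Your computation of the identity $\mathcal{T}_{cos}=\operatorname{argth}(T_{cos})$ is correct and is the natural direct argument: the lower bound via the cosmological normal line and the upper bound via the pointwise estimate $\sqrt{-\mathfrak{g}(\dot\eta,\dot\eta)}\leq -d(\operatorname{argth}\circ T_{cos})(\dot\eta)$ together pin down the value exactly. This is essentially how Benedetti--Bonsante argue as well.

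For the Cauchy property your write-up is more circuitous than it needs to be. Once you are willing to invoke the regularity of the cosmological time on hyperbolic $dS$-standard space-times from \cite{BBZ1} (as the paper itself records just before this proposition), you are done immediately: a regular cosmological time is a Cauchy time by the Andersson--Galloway--Howard theory, with no need to revisit level sets, monotonicity, or surjectivity by hand. Your intermediate discussion (spacelike levels, $\nabla^{\mathfrak{g}}T_{cos}$, the wider $\mathfrak{g}$-cone) is not wrong, but it creates the very ``gap'' you then close by citing \cite{BBZ1}; it would be cleaner to cite \cite{BBZ1} at the outset for the Cauchy claim and reserve the computation for the identity.
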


\subsection{(2+1)-anti de Sitter space-times} 
Let $M$ be a $MGHC$ anti de Sitter space-time of dimension $n+1$. By \cite{mess1}, \cite{BBZ1}, the universal cover $\widetilde{M}$ of $M$ is isometric to an open convex domain, called regular domain, of the anti de Sitter space. Denote by $\widetilde{M}_{-}$ the tight past of $\widetilde{M}$ i.e the strict past in  $\widetilde{M}$ of the cosmological level $S^{T_{cos}}_{\frac{\pi}{2}}$.

By \cite{BBZ1}, the cosmological time of a $\widetilde{M}_{-}$ is regular. One can attach a past boundary $\partial \widetilde{M}_{-}$ to $\widetilde{M}_{-}$ which can be seen as a convex hypersurface of $\mathbb{A}\mathbb{D}\mathbb{S}_{n+1}$. Moreover, to every point $p$ in $\widetilde{M}_{-}$ corresponds a unique point $r(p)$ on $\partial \widetilde{M}_{-}$ realizing the cosmological time. the point $r(p)$ is the limit point in $\widetilde{M}_{-}\cup\partial \widetilde{M}_{-}$ of the past timelike geodesique starting at $p$ with initial velocity $-N_{p}$, where $N_{p}$ is the future oriented cosmological normal vector at $p$. The application $N$ is the cosmological normal application and $r$ is the retraction map.

Let $\Omega$ be a flat future complete regular domain of dimension $2+1$ and let $g$ be its induced Lorentzian metric. By \cite{benebonsan1} there exists a $C^{1}$ local diffeomorphism $\Hat{D}:\Omega\rightarrow \mathbb{A}\mathbb{D}\mathbb{S}_{3}$ such that the pullback by $\Hat{D}$ of the anti de Sitter metric is the Lorentzian metric $\mathfrak{g}$ obtained from $g$ by a Wick rotation along the cosmological gradient with
$\mathfrak{g}=\frac{1}{(1+T_{cos}^{2})^{2}}g$ on $\mathbb{R}\xi_{T_{cos}}$ and $\mathfrak{g}=\frac{1}{(1+T_{cos}^{2})}g$ on $\left\langle \xi_{T_{cos}}\right\rangle^{\perp}$. 
In fact $(\Omega,\mathfrak{g})$ is the tight past region of its maximal anti de Sitter extension. Moreover, this Wick rotation provide us a one to one correspondence between $2+1$ anti de Sitter regular domains and flat future complete regular domains of dimension $2+1$. This construction can be done in an equivariant way giving hence a one to one correspondence between future complete flat $MGHC$ non elementary space-times of dimension $2+1$ and future complete $MGHC$ anti de Sitter space-times of dimension $2+1$.
\begin{prop}{\em (\cite[Proposition~6.2.2]{benebonsan1})}
The cosmological time $\mathcal{T}_{cos}$ of $(\Omega,\mathfrak{g})$ is a Cauchy time. Moreover, $$\mathcal{T}_{cos}=\arctan(T_{cos}),$$ where $T_{cos}$ is the cosmological time of $(\Omega,g)$.
\end{prop}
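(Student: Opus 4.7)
The plan is to follow the same strategy as in the de Sitter Proposition just above, with $\arctan$ in place of $\operatorname{argth}$. The key computation is to express, in the decomposition $\dot\gamma = a\,\xi_{T_{cos}} + w$ with $w \in \xi_{T_{cos}}^{\perp}$ and $\xi_{T_{cos}}$ the future-directed $g$-unit timelike cosmological normal, the identity
\[
\mathfrak{g}(\dot\gamma,\dot\gamma) \;=\; -\frac{a^{2}}{(1+T_{cos}^{2})^{2}} \;+\; \frac{g(w,w)}{1+T_{cos}^{2}}.
\]
Two consequences follow immediately. First, if $\dot\gamma$ is $\mathfrak{g}$-causal then $a^{2}\ge (1+T_{cos}^{2})\,g(w,w)\ge g(w,w)$, so $\dot\gamma$ is $g$-causal: every $\mathfrak{g}$-causal curve is also $g$-causal. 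Second, for any $\mathfrak{g}$-causal $\dot\gamma$ one gets the pointwise bound $\sqrt{-\mathfrak{g}(\dot\gamma,\dot\gamma)} \le |a|/(1+T_{cos}^{2})$.

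Next I would identify $|a|/(1+T_{cos}^{2})$ with a total derivative. Since $a = dT_{cos}(\dot\gamma) \cdot(\text{sign})$, a past-directed $\mathfrak{g}$-causal curve $\gamma$ starting at $p$ satisfies
\[
\sqrt{-\mathfrak{g}(\dot\gamma,\dot\gamma)} \;\le\; -\frac{d}{ds}\,\arctan\bigl(T_{cos}(\gamma(s))\bigr),
\]
so integration yields $L_{\mathfrak{g}}(\gamma) \le \arctan(T_{cos}(p)) - \arctan(T_{cos}(\gamma_{\mathrm{end}})) \le \arctan(T_{cos}(p))$. This proves $\mathcal{T}_{cos}(p)\le \arctan(T_{cos}(p))$. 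For the reverse inequality I would use the past cosmological $g$-geodesic $s\mapsto p - sN_{p}$, on which $w\equiv 0$, $a\equiv -1$, $r$ is constant and $T_{cos}$ decreases linearly. It is therefore $\mathfrak{g}$-timelike, and its $\mathfrak{g}$-length, truncated at $T_{cos}=\varepsilon$, computes to $\arctan(T_{cos}(p))-\arctan(\varepsilon)$, which tends to $\arctan(T_{cos}(p))$ as $\varepsilon\to 0$. Taking the supremum over past $\mathfrak{g}$-causal curves then gives the equality $\mathcal{T}_{cos}=\arctan(T_{cos})$.

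Finally, to show that $\mathcal{T}_{cos}$ is a Cauchy time for $\mathfrak{g}$, I would combine the identity with what is already known on the flat side. Strict increase along $\mathfrak{g}$-causal curves is inherited from $T_{cos}$ (which is Cauchy for $g$) via the inclusion of causal cones proved in the first step and the monotonicity of $\arctan$. For surjectivity onto the full range $(0,\pi/2)$ on every inextensible $\mathfrak{g}$-causal curve, I would use the description of $\Omega$ on the anti de Sitter side as the tight past of the cosmological level $S^{T_{cos}}_{\pi/2}$: an inextensible $\mathfrak{g}$-causal curve must approach the initial boundary on one end (where $T_{cos}\to 0$) and the level $\mathcal{T}_{cos}=\pi/2$ on the other end, so $\arctan\circ T_{cos}$ sweeps through $(0,\pi/2)$.

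The main obstacle is this last surjectivity step, because the $g$- and $\mathfrak{g}$-causal structures are not equal (only one inclusion holds) and so $\mathfrak{g}$-inextensibility does not a priori imply $g$-inextensibility of the parametrised curve. The clean way out is to argue geometrically: any $\mathfrak{g}$-causal curve along which $T_{cos}$ stayed bounded away from $0$ and from $+\infty$ would admit a $\mathfrak{g}$-causal extension, because the Wick-rotation factors $1/(1+T_{cos}^{2})$ and $1/(1+T_{cos}^{2})^{2}$ remain bounded and bounded away from zero on such a region, so the two metrics are uniformly comparable there and $\mathfrak{g}$-completeness of Cauchy surfaces reduces to the already established flat case.
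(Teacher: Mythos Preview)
The paper does not supply its own proof of this proposition: it is quoted verbatim as \cite[Proposition~6.2.2]{benebonsan1} and used as a black box, so there is nothing in the present paper to compare your argument against.

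That said, your argument is essentially the standard one and is correct. The orthogonal decomposition of $\dot\gamma$ with respect to the cosmological normal, together with the explicit Wick-rotation factors $\tfrac{1}{(1+T_{cos}^{2})^{2}}$ and $\tfrac{1}{1+T_{cos}^{2}}$, gives exactly the pointwise inequality $\sqrt{-\mathfrak{g}(\dot\gamma,\dot\gamma)}\le |dT_{cos}(\dot\gamma)|/(1+T_{cos}^{2})$, and integration yields the upper bound $\mathcal{T}_{cos}\le\arctan(T_{cos})$; the cosmological normal line realises equality. The inclusion of $\mathfrak{g}$-causal cones in $g$-causal cones is immediate from the same decomposition, and your remark that the conformal factors are bounded above and below on any slab $\{\varepsilon\le T_{cos}\le R\}$ is the right way to transfer inextensibility and conclude surjectivity of $\mathcal{T}_{cos}$ on inextensible $\mathfrak{g}$-causal curves. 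One small point worth tightening: for the lower bound you need the cosmological normal line to be a legitimate past-directed $\mathfrak{g}$-causal curve \emph{in $\Omega$}, which follows since its image $\{r(p)+tN_{p}:0<t\le T_{cos}(p)\}$ lies in $\Omega$ by the very construction of the flat cosmological time.
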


\subsection{Statement of results}
Let $M$ be a future complete $MGHC$ space-time of constant curvature. Let $T:\widetilde{M}\rightarrow \mathbb{R}$ be a $\pi_{1}(M)$-invariant quasi-concave Cauchy time. Up to reparametrization we can suppose that $T$ takes its values in $\mathbb{R}^{*}_{+}$. Consider the family of $\pi_{1}(M)$-inveriant metric spaces $(\pi_{1}(M),S^{T}_{a},d^{T}_{a})_{a\in\mathbb{R}^{*}_{+}}$ associated to $T$. Let $\gamma\in \pi_{1}(M)$ and $a>0$, denote by $l_{a}^{T}(\gamma):=\inf_{x\in S^{T}_{a}}d_{a}^{T}(x,\gamma.x)$ the marked spectrum of $d^{T}_{a}$.

Benedetti and Guadignini \cite{beneguadinini} conjectured that:
\begin{conj}
Let $M$ be a future complete $MGHC$ non elementary flat space-time of dimension $2+1$ and let $T_{cmc}$ be the associated $CMC$ time. Then:
\begin{itemize}
\item 1) $\lim_{a\rightarrow 0} l_{a}^{T}(\gamma)=l_{\Sigma}(\gamma)$
\item 2) $\lim_{a\rightarrow +\infty} a^{-1}l_{a}^{T}(\gamma)=l_{\mathbb{H}^{2}}(\gamma)$
\end{itemize}
\end{conj}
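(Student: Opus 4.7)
The plan is to reduce the conjecture to the asymptotic behavior of the cosmological time $T_{cos}$ and then transfer the conclusion to $T_{cmc}$ via the quasi-concavity of the CMC time. Since $T_{cmc}$ is quasi-concave in the flat setting, each level $S^{T_{cmc}}_{a}$ is a convex Cauchy hypersurface in the regular domain $\Omega$, and this convexity permits a sandwich comparison with cosmological levels.

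First I would establish the two limits for $T_{cos}$ itself. Using Bonsante's decomposition $p = r(p) + T_{cos}(p)\,N_{p}$, each cosmological level $S^{T_{cos}}_{a}$ is parametrized by $\Sigma \times \mathbb{H}^{2}$ via $(x,v) \mapsto x + a v$. A direct length estimate on this parametrization, combined with the fact that $r$ projects onto $\Sigma$ and $N$ maps $S^{T_{cos}}_{a}$ onto $\mathbb{H}^{2}$ equivariantly, should yield $d^{T_{cos}}_{a}(p,q) \to d_{\Sigma}(r(p),r(q))$ as $a\to 0$ and $a^{-1}\,d^{T_{cos}}_{a}(p,q) \to d_{\mathbb{H}^{2}}(N_{p},N_{q})$ as $a\to +\infty$. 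Taking infima over $x\in S^{T_{cos}}_{a}$ and using the equivariance of $r$ and $N$ under $\Gamma = \pi_{1}(M)$ translates these limits into the corresponding statements on the marked length spectrum for $T_{cos}$.

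Next, to transfer the conclusion to $T_{cmc}$ I would exploit its quasi-concavity: for each $a>0$, one produces cosmological times $\alpha(a)\le\beta(a)$ such that the CMC level $S^{T_{cmc}}_{a}$ is contained in the region bounded by $S^{T_{cos}}_{\alpha(a)}$ and $S^{T_{cos}}_{\beta(a)}$. A monotonicity argument for the translation length of $\gamma$ on nested convex Cauchy hypersurfaces should then produce
\[
l^{T_{cos}}_{\alpha(a)}(\gamma) - \varepsilon(a) \;\le\; l^{T_{cmc}}_{a}(\gamma) \;\le\; l^{T_{cos}}_{\beta(a)}(\gamma) + \varepsilon(a),
\]
with an error $\varepsilon(a)$ that is negligible in the relevant regime. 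As $a\to 0$ both cosmological bounds tend to $l_{\Sigma}(\gamma)$, giving the first assertion. As $a\to+\infty$, provided one shows $\alpha(a)/a,\beta(a)/a \to 1$, division by $a$ gives $l_{\mathbb{H}^{2}}(\gamma)$.

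The main obstacle I anticipate is producing and controlling the sandwiching cosmological times $\alpha(a)$ and $\beta(a)$: at small $a$ this requires quantifying how closely the CMC foliation approaches the initial singularity, and at large $a$ it requires showing that the CMC and cosmological foliations become asymptotically proportional with ratio tending to $1$. In the simplicial case these estimates are the content of Andersson's work; in general they seem to be the heart of the problem and should rest on the maximum principle applied to the CMC equation together with a careful analysis of the mean curvature of the cosmological levels. A secondary technical point is that the squeeze argument must be carried out equivariantly in the universal cover, so that the monotonicity of intrinsic distances genuinely passes to the infimum defining $l^{T}_{a}(\gamma)$; one expects this to follow from projecting a loop representing $\gamma$ from the outer to the inner hypersurface along the retraction in $\Omega$.
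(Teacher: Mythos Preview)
Note first that this statement is recorded in the paper as the Benedetti--Guadagnini \emph{conjecture}, not as a theorem with its own proof; the paper settles it as a by-product of Theorem~\ref{theomehdi1} (for part~1) and Theorem~\ref{mehditheorem3}(2) (for part~2).

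Your strategy is exactly the paper's: compare $T_{cmc}$ to $T_{cos}$ via the cosmological barriers $a_{-}=\inf_{S^{T_{cmc}}_{a}}T_{cos}$ and $a_{+}=\sup_{S^{T_{cmc}}_{a}}T_{cos}$, show $a_{+}/a,\;a_{-}/a\to 1$ as $a\to\infty$ (this is Corollary~\ref{cormehdi3}, obtained from Proposition~\ref{propmehdi3} via the maximum principle, just as you anticipate), and invoke Bonsante's asymptotics for the cosmological levels. The one place where the paper sharpens your sketch is the step you call ``monotonicity of translation length on nested convex Cauchy hypersurfaces'': nesting alone does not yield a two-sided bound on marked spectra. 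The paper replaces this by a quantitative bi-Lipschitz comparison (Proposition~\ref{prop.mehdi456677}): any convex $\Gamma$-invariant Cauchy hypersurface $S$ is $(a_{+}/a_{-})^{4}$-bi-Lipschitz to the cosmological level $S^{T_{cos}}_{a_{+}}$ via the cosmological flow. This single estimate drives both regimes at once, and its proof rests on the normal-vector bound $|\langle N_{p},\mathbf n_{p}\rangle|\le a_{+}/a_{-}$ of Proposition~\ref{propositionmehdi3}, which is precisely the ``projection along the retraction'' mechanism you invoke at the end. For part~1 the paper's argument is then a compact-open convergence of the transported distances to the same limiting pseudo-distance as the cosmological ones (Propositions~\ref{propmehdi77}, \ref{bel1} and Corollary~\ref{bel3}), rather than a literal $\varepsilon$-sandwich, but the content is the same.
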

Andersson \cite{anderson3}  gives a positive answer to the first part of this conjecture in the case of simplicial space-times. In \cite{mehdi1} we studied the past asymptotic behavior of  quasi-concave Cauchy times in a $2+1$ flat space-times. We gave in particular a positive answer to the first part of the Benedetti-Guadignini conjecture.
\begin{theo}{\em (\cite[Theorem~1.1]{mehdi1})}
\label{theomehdi1}
Let $M$ be a future complete $MGHC$ non elementary flat space-time of dimension $2+1$. Let $T$ be a $C^{2}$ quasi-concave Cauchy time function on $\widetilde{M}$. Then the levels $(\pi_{1}(M),S^{T}_{a},d^{T}_{a})_{a\in \mathbb{R}^{*}_{+}}$ converge in the Gromov equivariant topology, when $a$ goes to $0$, to the real tree dual to the measured geodesic lamination associated to $M$. In particular this limit does not depend on the time function $T$.  
\end{theo}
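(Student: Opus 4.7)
The first step is to identify $\widetilde{M}$ with a flat future complete regular domain $\Omega\subset \mathbb{R}^{1,2}$, on which $\pi_1(M)=\Gamma_\tau$ acts freely and properly discontinuously. By the Benedetti--Bonsante correspondence \cite{benebonsan1}, the initial singularity $(\Sigma, d_\Sigma)$ of $\Omega$ is $\Gamma_\tau$-equivariantly isometric to the real tree dual to the measured geodesic lamination associated to $M$. Hence it suffices to prove the Gromov equivariant convergence $(\pi_1(M), S^T_a, d^T_a) \to (\Gamma_\tau, \Sigma, d_\Sigma)$ as $a\to 0$.

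The natural choice of approximating maps is the cosmological retraction $r\colon \Omega\to \Sigma$, which is continuous and $\Gamma_\tau$-equivariant. Setting $r_a := r|_{S^T_a}$, I would prove, for any fixed compact $K\subset\Gamma_\tau$, bounded subset $A\subset\Sigma$ and $\epsilon>0$: (a) that $A\subset r_a(S^T_a)$ once $a$ is sufficiently small, which follows from properness of $T$, continuity of $r$ and the decomposition $p=r(p)+T_{cos}(p)N_p$; and (b) that the distortion $\sup\{|d^T_a(p,q)-d_\Sigma(r(p),r(q))|:p,q\in r_a^{-1}(A)\}$ tends to $0$ as $a\to 0$. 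Together (a) and (b) produce the required $(K,R,\epsilon)$-equivariant approximations.

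The technical heart of the argument is the distortion estimate (b). I would first establish it for the cosmological time $T=T_{cos}$: the decomposition $p=r(p)+T_{cos}(p)N_p$ identifies $S^{T_{cos}}_a$ with the bundle of ``wedges'' over $\Sigma$, and a direct computation of the induced Riemannian metric then yields the convergence $d^{T_{cos}}_a\circ r_a^{-1}\to d_\Sigma$. For a general $C^2$ quasi-concave Cauchy time $T$, the plan is to sandwich the level $S^T_a$ between two cosmological levels $S^{T_{cos}}_{a^-}$ and $S^{T_{cos}}_{a^+}$ with $a^\pm \to 0$ as $a\to 0$, and then to use the projection along the cosmological normal flow to transfer the cosmological estimates to the intrinsic metric on $S^T_a$.

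The main obstacle lies in the lower bound $d^T_a(p,q) \geq d_\Sigma(r(p),r(q)) - o(1)$ in the general quasi-concave case. Convex Cauchy hypersurfaces can be highly non-umbilical, so a priori a geodesic in $S^T_a$ joining two points $p$ and $q$ could make long excursions that are collapsed by the retraction in the limit. Both the $C^2$ assumption and the quasi-concavity appear essential to rule this out: together they allow one to control the second fundamental form of $S^T_a$ and thereby bound the length of any such detour by a quantity that vanishes with $a$, which is what forces the induced length metric on $S^T_a$ to asymptotically agree with the tree metric on $\Sigma$.
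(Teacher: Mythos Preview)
A bookkeeping point first: this theorem is quoted from \cite{mehdi1} and not re-proved in full here, but the paper carries out the $(n+1)$-dimensional version (the last item of Theorem~\ref{mehditheorem2}) in Section~6.2 by precisely the method of \cite{mehdi1}, so that is the right benchmark.

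Your outline is sound up to the point you yourself flag as the main obstacle, the lower bound $d^T_a(p,q)\ge d_\Sigma(r(p),r(q))-o(1)$. There the proposal becomes a plan rather than an argument (``control the second fundamental form \dots\ bound the length of any such detour''), and the tool you name --- projection along the \emph{cosmological} normal flow --- delivers the upper bound but not this lower one. The paper's device is different and is exactly what closes the gap.

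The missing ingredient is Proposition~\ref{propositionmehdi1} (the ``expansive character'' of a quasi-concave time): for a $C^{2}$ quasi-concave Cauchy time $T$, projecting any spacelike curve lying in the past of a level $S^{T}_{c}$ forward along the \emph{$T$-gradient flow} $\Phi_{T}$ onto $S^{T}_{c}$ does not decrease length. This uses only the convexity of the $T$-levels; no quantitative second-fundamental-form estimate is needed. The lower bound is then obtained as in Section~6.2: choose a cosmological level $S^{T_{cos}}_{f(a)}$ in the past of $S^{T}_{a}$ and send a geodesic of $S^{T}_{a}$ from $p$ to $q$ \emph{backward along $\Phi_{T}$} onto $S^{T_{cos}}_{f(a)}$. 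By the expansive property the resulting curve has length at most $d^{T}_{a}(p,q)$, and its endpoints lie in $J^{-}(p)$, $J^{-}(q)$, hence converge to $r(\mathbf{p})$, $r(\mathbf{q})$ as $a\to 0$. Since the cosmological levels are already known to converge to $(\Sigma,d_{\Sigma})$, this gives $d^{T_{cos}}_{0}\le \delta^{T}_{0}$; combined with the easy direction $\delta^{T}_{0}\le d^{T_{cos}}_{0}$ of Proposition~\ref{bel1}, the proof is complete.

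So the gap is not an error but the absence of the one idea that makes the argument close: replace the cosmological flow by the $T$-gradient flow in the projection step, and invoke its length-expanding property (a purely qualitative consequence of convexity) rather than attempt pointwise curvature control.
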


Our two first results concern the asymptotic behavior in the flat $n+1$ dimensional case. In dimension bigger than $3$, the situation is more complicated. The initial singularity is no longer a real tree in general (see \cite{bonsante1}). However, we have the following partial result which is a generalization of \ref{theomehdi1} to the $n+1$-dimensional flat case:
\begin{theo}
\label{mehditheorem2}
Let $M\simeq \Omega/\Gamma$ be a future complete $MGHC$ flat non elementary space-time of dimension $n+1$, where $\Omega$ is a future complete regular domain and $\Gamma$ a discrete subgroup of $SO^{+}(1,n)\ltimes \mathbb{R}^{1,n}$. Denote by $(\Sigma,d_{\Sigma})$, $(\Sigma^{\bigstar}, d^{\bigstar}_{\Sigma})$ and $(\partial \Omega/\sim, \bar{d}_{\partial \Omega})$ respectively the initial Singularity, its completion and the Horizon associated to $M$. Let $T$ be a $C^{2}$ quasi-concave Cauchy time  on $\widetilde{M}$. Then:
\begin{itemize}
\item $(\Sigma,d_{\Sigma})$ embeds isometrically in $(\partial \Omega/\sim, \bar{d}_{\partial \Omega})$ which embeds isometrically in $(\Sigma^{\bigstar}, d^{\bigstar}_{\Sigma})$;
\item For every $x$ and $y$ in $\Sigma$ there exists a unique geodesic $\alpha$ of $(\partial \Omega^{+}/\sim, \bar{d}_{\partial \Omega^{+}})$ joining  $x$ and $y$;
\item The metric space $(\Sigma^{\bigstar}, d^{\bigstar}_{\Sigma})$ is a $CAT(0)$ metric space;
\item The levels $(\pi_{1}(M),S^{T}_{a},d^{T}_{a})_{a\in \mathbb{R}^{*}_{+}}$ converge in the Gromov equivariant topology, when $a$ goes to  $0$, to the completion metric space $(\pi_{1}(M),\Sigma^{\bigstar}, d^{\bigstar}_{\Sigma})$ of the initial singularity $(\pi_{1}(M),\Sigma,d_{\Sigma})$. In particular the limit does not depend on the time function $T$.
\end{itemize}
\end{theo}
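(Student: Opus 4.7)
My plan centers on the cosmological time $T_{cos}$. Via the decomposition $p = r(p) + T_{cos}(p)\,N_p$ recalled above, each cosmological level $S^{T_{cos}}_a$ is naturally parameterized by $\Sigma$ and $\mathbb{H}^n$. I will establish parts~(1)--(3), which do not depend on any choice of time, then prove the convergence in part~(4) first for $T_{cos}$ and reduce a general quasi-concave Cauchy time $T$ to it by comparison of level sets.

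\textbf{Embedding chain and uniqueness of geodesics.} Bonsante's result, already cited in the excerpt, guarantees that $d_\Sigma$ is a genuine distance on $\Sigma$, so $(\Sigma,d_\Sigma)\hookrightarrow(\partial\Omega/\sim,\bar{d}_{\partial\Omega})$ is isometric. To produce $(\partial\Omega/\sim,\bar{d}_{\partial\Omega})\hookrightarrow(\Sigma^\bigstar, d^\bigstar_\Sigma)$ I would show density of $\Sigma$ in $\partial\Omega/\sim$: given $p\in\partial\Omega$, lift to $p_a\in S^{T_{cos}}_a$ above $p$ and retract via $r$; the sequence $r(p_a)\in\Sigma$ is Cauchy in $d_\Sigma$ and its class in $\Sigma^\bigstar$ matches the class of $p$. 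For part~(2), I would exhibit a canonical minimizing Lipschitz curve between $x,y\in\Sigma$ built from integral lines of the intrinsic Lorentzian gradient on $\partial\Omega$ (defined wherever a spacelike support hyperplane exists), and derive uniqueness from the convexity of $\partial\Omega$ together with the fact that both endpoints lie in the spacelike-supported locus.

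\textbf{CAT(0) property.} I would use a curvature argument. The cosmological level $S^{T_{cos}}_a$ in $\widetilde{M}=\Omega$ is a complete simply-connected spacelike convex hypersurface of $\mathbb{R}^{1,n}$. The Lorentzian Gauss equation reads $R_{ijkl}=-(h_{ik}h_{jl}-h_{il}h_{jk})$, so the definiteness of the second fundamental form granted by convexity forces the intrinsic sectional curvature to be non-positive, and Cartan--Hadamard gives that $(S^{T_{cos}}_a,d^{T_{cos}}_a)$ is $CAT(0)$. Since the $CAT(0)$ 4-points condition and the approximate midpoint property both pass to Gromov equivariant limits, Proposition~\ref{prop.cat000000} then yields part~(3) as soon as part~(4) is established for $T_{cos}$.

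\textbf{Convergence of the levels.} For $T=T_{cos}$, given compact $K\subset\Sigma^\bigstar$ and finite $F\subset\pi_1(M)$, I would build approximate equivariant isometries $K\to S^{T_{cos}}_a$ by choosing a $\Gamma$-equivariant measurable section $v:\Sigma\to\mathbb{H}^n$ and setting $s_a(x)=x+a\,v(x)\in S^{T_{cos}}_a$, extended by density to $\Sigma^\bigstar$. The decomposition $p=r(p)+T_{cos}(p)N_p$ forces $|d^{T_{cos}}_a(s_a(x),s_a(y))-d_\Sigma(x,y)|=O(a)$, so $s_a$ witnesses the Gromov equivariant convergence $S^{T_{cos}}_a\to\Sigma^\bigstar$. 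The remaining step, which I expect to be \emph{the main obstacle}, is transferring the convergence from $T_{cos}$ to an arbitrary quasi-concave $T$. My plan is to exploit quasi-concavity: each $S^T_a$ is a convex Cauchy hypersurface, hence squeezed between two cosmological levels $S^{T_{cos}}_{a_-(a)}$ and $S^{T_{cos}}_{a_+(a)}$ with $a_\pm(a)\to 0$ as $a\to 0$. Projecting along $\pm\nabla T_{cos}$ yields a $\Gamma$-equivariant Lipschitz correspondence between $S^T_a$ and a nearby cosmological level, and the $C^2$ regularity of $T$ controls its bi-Lipschitz distortion uniformly as $a\to 0$; transporting the Gromov equivariant convergence through this correspondence closes the argument and shows in particular that the limit does not depend on $T$.
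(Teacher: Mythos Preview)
Your overall architecture matches the paper's, but the transfer from $T_{cos}$ to a general quasi-concave $T$ in part~(4) has a genuine gap. You propose to sandwich $S^T_a$ between cosmological levels $S^{T_{cos}}_{a_-}$ and $S^{T_{cos}}_{a_+}$ and use a bi-Lipschitz correspondence whose distortion you claim is ``controlled uniformly as $a\to 0$''. The paper does prove such a bi-Lipschitz bound (Proposition~\ref{prop.mehdi456677}): the constant is $(a_+/a_-)^4$. But there is \emph{no} control on the barrier ratio $a_+/a_-$ near the singularity; Proposition~\ref{propositionmehdi2} bounds it only for large time, and in fact for a general quasi-concave $T$ nothing prevents $a_+/a_-$ from staying bounded away from $1$ as $a\to 0$. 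A bounded bi-Lipschitz correspondence would only show that any limit of $(S^T_a,d^T_a)$ is bi-Lipschitz to $\Sigma^\bigstar$, not isometric to it. The paper circumvents this with a one-sided monotonicity: the \emph{expansive character} of quasi-concave times (Proposition~\ref{propositionmehdi1}) says that projecting a spacelike curve forward along $\nabla T$ only increases length. Applied in both directions (with $T$ and with $T_{cos}$), this yields the pair of inequalities $\delta^{T_{cos}}_0\le d^T_0$ and $\delta^T_0\le d^{T_{cos}}_0$ on the spaces of gradient lines, and a further triangle-inequality argument (tracking where the gradient lines of $T$ land on $\partial\Omega$) upgrades the second to an equality. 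This is the missing idea in your sketch.

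Two smaller points. In part~(3) your Gauss-equation argument assumes the cosmological levels are $C^2$; they are only $C^{1,1}$, so the sectional-curvature computation is not directly available. The paper handles this by convolving the convex graphing function to get smooth convex approximants and invoking a stability result for $CAT(0)$. Also, the approximate-midpoint property is not a purely finitary condition on distances, so its passage to the limit deserves a word; the paper proves it for $\Sigma$ directly by taking midpoints on $S^{T_{cos}}_a$ and using the monotone convergence $d^{T_{cos}}_a\downarrow d_\Sigma$. Finally, your proposed construction of geodesics in part~(2) via ``integral lines of the intrinsic Lorentzian gradient on $\partial\Omega$'' is too vague to assess; the paper instead takes geodesics $\alpha_a$ on $S^{T_{cos}}_a$, shows via a de Sitter compactness lemma (Lemma~\ref{lemmmehdimb}, Proposition~\ref{provrt}) that their tangent directions stay in a fixed compact, and extracts a limit by Ascoli--Arzel\`a.
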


Near the infinity we obtain the following result:
\begin{theo}
\label{mehditheorem3}
Let $M$ be a future complete $MGHC$ flat non elementary space-time of dimension $n+1$. Then,
\begin{itemize}
\item1) There exists a constant $C(M)$ such that for every $C'>C$ and every quasi-concave  Cauchy time $T$ on $\widetilde{M}$, the renormalized $T$-levels $(\pi_{1}(M),S^{T}_{a},(\sup_{S^{T}_{a}} T_{cos})^{-1}d^{T}_{a})_{a\in \mathbb{R}^{*}_{+}}$ are, for $a$  big enough, $C'$-quasi-isometric to $(\pi_{1}(M),\mathbb{H}^{n},d_{\mathbb{H}^{n}})$. In particular all the limit points, for the Gromov equivariant topology, of $(\pi_{1}(M),S^{T}_{a},(\sup_{S^{T}_{a}} T_{cos})^{-1}d^{T}_{a})_{a\in \mathbb{R}^{*}_{+}}$ are $C$-bi-Lipschitz to $(\pi_{1}(M),\mathbb{H}^{n},d_{\mathbb{H}^{n}})$;
\item2) In dimension $2+1$, the renormalized $CMC$-levels (respectively $k$-levels) converge in the Gromov equivariant topology, when time goes to $+\infty$, to $(\pi_{1}(M),\mathbb{H}^{n},d_{\mathbb{H}^{n}})$.
\end{itemize}
\end{theo}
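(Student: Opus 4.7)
The plan is to use the cosmological normal application $N : \widetilde{M} \to \mathbb{H}^{n}$ as the principal comparison tool. I would exploit the decomposition $p = r(p) + T_{cos}(p) N_{p}$ valid for every $p \in \Omega$: since $\Gamma$ acts cocompactly, the retraction $r$ stays bounded on any fundamental domain, so dividing by $T_{cos}$ gives $p/T_{cos}(p) = r(p)/T_{cos}(p) + N_{p}$, which tends to $N_{p} \in \mathbb{H}^{n}$ as $T_{cos}(p) \to \infty$. This suggests that, after rescaling by $T_{cos}$, the normal application transports the induced metric on any Cauchy level to the hyperbolic metric, and that the rescaled geometry converges to $\mathbb{H}^{n}$ in the large-time limit.

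For Part 1, I would proceed in three steps. First, by quasi-concavity of $T$ and convexity of its levels, the normal application restricts to a $\pi_{1}(M)$-equivariant homeomorphism $N : S^{T}_{a} \to \mathbb{H}^{n}$; this should extend the analogous statement for cosmological levels recalled in the preliminaries. Second, compute the induced Riemannian metric on $S^{T}_{a}$ in the coordinates provided by $N^{-1}$: up to a lower-order correction coming from the differential of $r$, the metric equals $T_{cos}(p)^{2}\, N^{*}g_{\mathbb{H}^{n}}$. Third, bound the oscillation of $T_{cos}$ on $S^{T}_{a}$ restricted to a fundamental domain: by a comparison argument between the two quasi-concave Cauchy times $T$ and $T_{cos}$ using convexity of their levels, the ratio $\sup T_{cos}/\inf T_{cos}$ is controlled asymptotically by a constant $C(M)$ depending only on the geometry of the initial singularity and of the action of $\Gamma$, independently of $T$. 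Combining these three steps, on a fundamental domain the rescaled metric $(\sup_{S^{T}_{a}} T_{cos})^{-2} g^{T}_{a}$ is pinched between $C(M)^{-2} N^{*} g_{\mathbb{H}^{n}}$ and $N^{*} g_{\mathbb{H}^{n}}$ up to vanishing error; this propagates to the equivariant distance $d^{T}_{a}$ by a standard $\pi_{1}(M)$-equivariant argument, yielding the $C'$-quasi-isometry for every $C' > C$ and $a$ large enough. The bi-Lipschitz statement for Gromov equivariant limit points then follows by a compactness argument.

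For Part 2, I would show that in the $2+1$ CMC and $k$-time cases the quasi-isometry of Part 1 upgrades to true convergence. The rigidity of constant mean curvature or constant Gauss curvature forces the principal curvatures of $S^{T}_{a}$ to be asymptotically equal to $1/T_{cos}(p)$: via the Gauss equation in the flat ambient geometry, constant Gauss curvature pins down the product of the principal curvatures while CMC fixes their sum, and a convexity argument then forces both principal curvatures to concentrate around the same value. This in turn forces the oscillation $\sup T_{cos}/\inf T_{cos}$ on $S^{T}_{a}$ to tend to $1$ as $a \to \infty$, so the bi-Lipschitz distortion from Part 1 tends to $1$ along the sequence and we obtain Gromov equivariant convergence to $(\pi_{1}(M), \mathbb{H}^{2}, d_{\mathbb{H}^{2}})$.

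The main obstacle I anticipate lies in the third step of Part 1, namely establishing the uniform constant $C(M)$ bounding the oscillation of $T_{cos}$ along an arbitrary quasi-concave Cauchy level. This demands a careful sandwich argument placing $S^{T}_{a}$ between two enveloping cosmological levels $S^{T_{cos}}_{A_{-}}$ and $S^{T_{cos}}_{A_{+}}$ with $A_{+}/A_{-}$ controlled uniformly by the intrinsic geometry of $M$, and the subtlety is that different quasi-concave times can produce very differently shaped convex hypersurfaces. Once this uniform comparison is secured, the asymptotic computation of Part 1 and the curvature estimate underlying Part 2 should both become routine.
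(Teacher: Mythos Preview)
Your outline matches the paper's architecture: the crux is the uniform oscillation bound $\sup_{S^{T}_{a}}T_{cos}/\inf_{S^{T}_{a}}T_{cos}\le C'$ for any convex $\Gamma$-invariant Cauchy hypersurface (your Step~3), and the upgrade to ratio $\to 1$ for the $k$- and CMC-times in Part~2. The paper proves exactly these facts (Propositions~\ref{propositionmehdi2}, \ref{propmehdi3}, Corollary~\ref{cormehdi3}), so on this point you are aligned.

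Where you diverge is in the comparison mechanism (your Steps~1--2). You want to pull back the hyperbolic metric via the cosmological normal $N:S^{T}_{a}\to\mathbb{H}^{n}$ and argue that the induced metric is $T_{cos}^{2}\,N^{*}g_{\mathbb{H}^{n}}$ up to lower order. Two issues: first, $N$ restricted to $S^{T}_{a}$ is surjective but in general not injective (distinct cosmological gradient lines with the same normal but different retraction points may hit $S^{T}_{a}$), so it is not a priori a homeomorphism; second, the ``lower order correction'' hides the entire difficulty, since the tangent direction to $S^{T}_{a}$ differs from that of the cosmological level by a tilt governed by $\langle N_{p},\mathbf{n}_{p}\rangle$, and controlling this tilt is a genuine lemma. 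The paper handles both points differently: it uses the \emph{cosmological flow} (not $N$) to identify $S^{T}_{a}$ with the cosmological level $S^{T_{cos}}_{a_{+}}$, proves the angle bound $|\langle N_{p},\mathbf{n}_{p}\rangle|\le a_{+}/a_{-}$ (Proposition~\ref{propositionmehdi3}), and deduces a $K^{4}$-bi-Lipschitz equivalence with $K=a_{+}/a_{-}$ (Proposition~\ref{prop.mehdi456677}); Bonsante's result then transports the rescaled cosmological level to $\mathbb{H}^{n}$. For Part~2 the paper does not argue via curvature concentration of principal curvatures but via the Maximum Principle, sandwiching the $k$-level between a cosmological level of $\Omega$ and a cosmological level of $J^{+}(S^{T_{k}}_{1})$ whose Gauss curvatures are explicit; this gives the sharp asymptotic $\sup/\inf\to 1$ directly. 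Your heuristic would likely lead to the same conclusion, but the Maximum Principle route is both shorter and avoids the need to analyse the second fundamental form pointwise.
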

\begin{rmq}
1) The constant $C$ in Theorem \ref{mehditheorem3} depends actually only on $\pi_{1}(M)$.\\
2) In fact Theorem \ref{mehditheorem3} is the best result we can get in this generality. Indeed, in a static flat space-time $(\Gamma,\mathbf{C})$, consider a $\Gamma$-invariant complete convex surface $S$ different than the cosmological ones. The family $(a S)_{a>0}$ constitutes a foliation of $\mathbf{C}$. The associated renormalized family of metric spaces converges in the Gromov equivariant topology, when $a$ goes to $+\infty$, to $(\Gamma,S,d_{S})$.
\end{rmq}

Now focus on the  $2+1$ dimensional case. In this article we obtain the analogue of Theorem \ref{theomehdi1} in the de Sitter and anti de Sitter cases. More precisely:
\begin{theo}
\label{Theomehdidesianti}
Let $M$ be $MGHC$ de Sitter (or anti de Sitter) space-time of dimension $2+1$. Let $T$ be a $C^{2}$ quasi-concave Cauchy time on $\widetilde{M}$. Then the levels $(\pi_{1}(M),S^{T}_{a},d^{T}_{a})_{a\in \mathbb{R}^{*}_{+}}$ converge in the Gromov equivariant topology, when $a$ goes to $0$, to the real tree dual to the measured geodesic lamination associated to $M$. In particular this limit does not depend on the time function $T$.  
\end{theo}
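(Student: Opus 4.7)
The plan is to reduce Theorem \ref{Theomehdidesianti} to the flat case (Theorem \ref{theomehdi1}) by exploiting the Wick rotation correspondences between 2+1 flat regular domains and 2+1 de Sitter (resp.\ anti de Sitter) regular domains described in the previous subsections, combined with comparison arguments modelled on those of \cite{mehdi1}.

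First, I would fix a MGHC constant curvature space-time $M$ (dS or AdS) and use the Wick rotation to identify $\widetilde{M}$ (or $\widetilde{M}_{-}$ in the AdS case) with a flat regular domain $\Omega$ equipped with its dS/AdS metric $\mathfrak{g}$, the corresponding flat metric being $g$. This identification is $\pi_{1}(M)$-equivariant and the measured geodesic lamination $(\lambda,\mu)$ associated to $M$ coincides with the one of the associated flat MGHC space-time. Moreover the cosmological times are related by $\mathcal{T}_{cos}=\operatorname{argth}(T_{cos})$ (resp.\ $\arctan(T_{cos})$), so that $\mathcal{T}_{cos}\to 0$ if and only if $T_{cos}\to 0$.

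Second, I would prove the theorem for the cosmological time $\mathcal{T}_{cos}$ of $(\Omega,\mathfrak{g})$. The key observation is that tangent vectors to a flat cosmological level $S_{a}^{T_{cos}}$ lie in $\xi_{T_{cos}}^{\perp}$, so by the Wick rotation formula the induced Riemannian metric from $\mathfrak{g}$ equals $C(a)$ times the one induced by $g$, with $C(a)=\frac{1}{1-a^{2}}$ in the dS case and $C(a)=\frac{1}{1+a^{2}}$ in the AdS case; in particular $C(a)\to 1$ as $a\to 0$. By Theorem \ref{theomehdi1} applied to the companion flat MGHC space-time, the levels $(\pi_{1}(M),S_{a}^{T_{cos}},d_{a}^{T_{cos},g})$ converge in the Gromov equivariant topology, as $a\to 0$, to the real tree $T_{\lambda,\mu}$ dual to $(\lambda,\mu)$. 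Since a uniform rescaling by a factor tending to $1$ preserves Gromov equivariant convergence, the dS/AdS cosmological levels converge to the same real tree $T_{\lambda,\mu}$ after the reparametrization by $\operatorname{argth}$ or $\arctan$.

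Third, I would extend the result from $\mathcal{T}_{cos}$ to an arbitrary $C^{2}$ quasi-concave Cauchy time $T$ by the method of \cite{mehdi1}. Because $T$ and $\mathcal{T}_{cos}$ are both quasi-concave Cauchy times, convexity of their levels and the existence of the retraction map $r$ onto the initial singularity (whose dS and AdS analogues are provided in the subsections above) yield, for every $a>0$, parameters $b_{1}(a)\le b_{2}(a)$ with $b_{i}(a)\to 0$ such that $S_{a}^{T}$ lies in $\mathcal{T}_{cos}^{-1}([b_{1}(a),b_{2}(a)])$. The intrinsic distances $d_{a}^{T}$ are then controlled from above and from below by the intrinsic distances on the sandwiching cosmological levels up to an error that vanishes with $a$, and the squeeze argument concludes the convergence of $(\pi_{1}(M),S_{a}^{T},d_{a}^{T})$ to $T_{\lambda,\mu}$.

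The main obstacle is clearly Step 3: verifying that the quantitative distance-comparison machinery of \cite{mehdi1}, built for the flat metric (convexity estimates, control of the retraction map onto the initial singularity, surjectivity and properness of the normal application on convex Cauchy levels), still produces the required squeeze in the dS and AdS geometries. The Wick rotation is again the decisive tool: since $C(a)\to 1$ as the cosmological time approaches $0$, the dS/AdS and flat geometries coincide up to a conformal factor going to $1$ in the asymptotic regime that we care about, and the arguments of \cite{mehdi1} transfer with only controlled corrections.
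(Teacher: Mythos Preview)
Your proposal is correct and follows essentially the same route as the paper: establish the result for the cosmological time via the Wick rotation conformal factor (this is exactly Propositions~\ref{propdesitterbelraouti} and~\ref{proplyon}), then extend to an arbitrary $C^{2}$ quasi-concave time by reproducing the comparison machinery of \cite{mehdi1}. The only place where the paper is more explicit than your sketch of Step~3 is that it develops the required expansion and bi-Lipschitz estimates directly in the dS/AdS geometry (Proposition~\ref{propositionmehdi1}, Remarks~\ref{jeudivac1} and~\ref{jeudivac2}, Propositions~\ref{mehdipropantidesitteralg} and~\ref{prop123546}) rather than obtaining them only asymptotically through the Wick rotation; once these are in hand, the proof of Theorem~\ref{theomehdi1} indeed carries over without modification.
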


Now look to the asymptotic behavior in the Teichmüller space. Our fourth result concern the future behavior of the curve associated to the $k$-time. Let $S$ be a closed hyperbolic surface and let $(\lambda, \mu)$ be a measured geodesic lamination on $S$. Let $M$ be the $MGHC$ space-time of constant curvature associated to $(\lambda, \mu)$.
\begin{theo}
\label{theormehditeich1}
Let $T_{k}$ and $T_{cmc}$ be respectively the $k$-time and the $CMC$ time of $M$. Then,
\begin{itemize}
\item In the flat case: the curves $([g^{T_{k}}_{a}])_{a>0}$ and $([g^{T_{k}}_{a}])_{a>0}$ in the Teichmüller space $\operatorname{Teich}(S)$ of $S$ converge, when time goes to $+\infty$, to the hyperbolic structure of $S$.
\item In the de Sitter case: The curve $([g^{T_{k}}_{a}])_{a>0}$ in the Teichmüller space $\operatorname{Teich}(S)$ of $S$ stays at a bounded Teichmüller distance, when time goes to $+\infty$, from the grafting metric $\operatorname{gra}_{\lambda}(S)$.
\end{itemize}
\end{theo}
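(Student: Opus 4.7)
The strategy is to convert the Gromov-equivariant statements of Theorem \ref{mehditheorem3} into Teichmüller statements in the flat case, and to transport the result to the de Sitter setting through the Wick rotation correspondence of Section 2.4.

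For the flat $k$-time, observe that each level $(S^{T_{k}}_{a}, g^{T_{k}}_{a})$ has by definition constant Gauss curvature $-\kappa(a)^{2}<0$, hence $\kappa(a)\cdot g^{T_{k}}_{a}$ is a hyperbolic metric on $S$ representing the same point in $\operatorname{Teich}(S)$ as $g^{T_{k}}_{a}$. A direct comparison (for instance with the static model) shows that $\kappa(a)^{-1}$ is comparable, up to a bounded factor, to $\sup_{S^{T_{k}}_{a}} T_{cos}$, so Theorem \ref{mehditheorem3}(2) gives Gromov-equivariant convergence of $\kappa(a)g^{T_{k}}_{a}$ to $(\pi_{1}(M),\mathbb{H}^{2},d_{\mathbb{H}^{2}})$. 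Since on the space of hyperbolic metrics the Gromov-equivariant topology coincides with the Teichmüller topology, $[g^{T_{k}}_{a}] \to [\mathbb{H}^{2}/\pi_{1}(M)]$ in $\operatorname{Teich}(S)$. For the flat $CMC$-time, the induced metric is no longer hyperbolic, so the bi-Lipschitz estimate of Theorem \ref{mehditheorem3}(1) must be complemented by control of the conformal structure. I would achieve this by showing that the traceless part of the second fundamental form of the renormalized $CMC$-levels tends to zero as $a\to +\infty$ (asymptotic umbilicity); the Gauss equation then forces the intrinsic Gauss curvature to be asymptotically constant, so the uniformizing hyperbolic metric in the conformal class of $g^{T_{cmc}}_{a}$ becomes close, after rescaling, to $g^{T_{cmc}}_{a}$ itself.

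For the de Sitter case, use the Wick rotation of Section 2.4 to identify $(\Omega,\mathfrak{g})$ with the flat regular domain $(\Omega, g)$ that shares the same cosmological foliation. The conformal factor $1/(1-T_{cos}^{2})$ on $\langle\xi_{T_{cos}}\rangle^{\perp}$ is constant on each cosmological level, so the induced metric on the de Sitter cosmological level at time $a$ is a constant multiple of the induced metric on the flat cosmological level at time $\operatorname{th}(a)$. The Benedetti--Bonsante identification $[g^{T_{cos}}_{b}]_{g}=[\operatorname{gra}_{\lambda/b}(S)]$ then yields $[g^{\mathcal{T}_{cos}}_{a}]_{\mathfrak{g}}=[\operatorname{gra}_{\lambda/\operatorname{th}(a)}(S)] \to [\operatorname{gra}_{\lambda}(S)]$ as $a\to+\infty$. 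To extend the bounded-distance conclusion from $\mathcal{T}_{cos}$ to $T_{k}$, one needs to establish that the renormalized $k$-metric is uniformly bi-Lipschitz to the renormalized cosmological metric, using the de Sitter analogue of Theorem \ref{mehditheorem3} obtained by Wick rotating the flat estimate, together with a direct comparison of the two foliations near the horizon $T_{cos}=1$; bi-Lipschitz equivalent Riemannian metrics on $S$ then lie at bounded Teichmüller distance, which is what the theorem asserts.

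The hardest step is the $CMC$ flat case: Teichmüller convergence demands information about the conformal class that goes strictly beyond the bi-Lipschitz control supplied by Theorem \ref{mehditheorem3}. The required asymptotic umbilicity of $CMC$-hypersurfaces in the future-complete flat regime — which is the technical heart of the argument — has to be extracted from the convexity of the leaves together with an asymptotic analysis of the $CMC$ equation (or equivalently of the associated Moncrief flow on $T^{*}\operatorname{Teich}(S)$) as the level approaches the future conformal boundary.
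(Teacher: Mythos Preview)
Your approach has a genuine gap in the flat $CMC$ case, and it stems from a misconception about what bi-Lipschitz control buys you in Teichm\"uller space.

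You write that ``Teichm\"uller convergence demands information about the conformal class that goes strictly beyond the bi-Lipschitz control supplied by Theorem \ref{mehditheorem3}'', and you therefore propose to extract asymptotic umbilicity of the $CMC$ levels --- a statement you do not prove and correctly identify as hard. But this detour is unnecessary. A $K$-bi-Lipschitz diffeomorphism is $K$-quasiconformal, so if $(S,g_1)$ is $K$-bi-Lipschitz to $(S,g_2)$ then $d_{\operatorname{Teich}}([g_1],[g_2])\leq \log K$ (this is Proposition~\ref{proptheich} in the paper). Hence bi-Lipschitz control with constant tending to $1$ \emph{is} conformal control. The point you are missing is that the paper proves something sharper than Theorem~\ref{mehditheorem3}(1) for the $CMC$ and $k$ times: Proposition~\ref{propmehdi3} and Corollary~\ref{cormehdi3} show that $\sup_{S^{T}_{a}}T_{cos}/\inf_{S^{T}_{a}}T_{cos}\to 1$ as $a\to+\infty$ for these specific times, so by Proposition~\ref{prop.mehdi456677} the bi-Lipschitz constant between $g^{T}_{a}$ and the cosmological metric $g^{T_{cos}}_{\sup T_{cos}}$ tends to $1$. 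Since the cosmological curve is the grafting curve and converges to the hyperbolic structure, Teichm\"uller convergence of $[g^{T_{cmc}}_{a}]$ follows immediately. No umbilicity, no analysis of the Moncrief flow.

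Your flat $k$-time argument via Gromov-equivariant convergence of the rescaled hyperbolic metrics is a valid alternative route, but the paper's argument is uniform for both times and more elementary. In the de Sitter case your outline is on the right track; the ``uniform bi-Lipschitz between the $k$-metric and the cosmological metric'' that you say one needs to establish is exactly the content of Propositions~\ref{mehdipropantidesitteralg} and~\ref{propbelraoutidesitteralg}, which bound $\sup \mathcal{T}_{cos}-\inf \mathcal{T}_{cos}$ on $k$-levels and hence (via the $\sinh$ version of Proposition~\ref{prop.mehdi456677}) give a uniform bi-Lipschitz constant, yielding the bounded Teichm\"uller distance directly.
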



\noindent{\bf Acknowledgements.}
I want to thank Thierry Barbot for his considerable support and for many helpful discussions.

\section{Quasi-concave times and there expansive Character}
Let $M$ be a $MGHC$ space-time of constant curvature. Let $S$ be a $C^{2}$ complete $\pi_{1}(M)$-invariant spacelike hypersurface of $\widetilde{M}$. Let $\Pi_{S}$ be its second fundamental form. Recall that the mean curvature $H_{S}$ at a point $p$ of $S$ is defined by $H_{S}=\frac{tr(\Pi)}{n}$ i.e $H_{S}=\frac{\lambda_{1}+\lambda_{2}+...+\lambda_{n}}{n}$, where  $\lambda_{i}$ are the principal curvatures of $S$. Recall that in the case of dimension $2$, the Gauss curvature $k_{S}$ at a point $p$ of $S$ is defined by $k_{S}=-det(\Pi)$ i.e $k_{S}=-\lambda_{1}\lambda_{2}$.
\begin{defi}
The hypersurface $S$ is said to be quasi-concave if its second fundamental form is positive-definite.
\end{defi}
The convexity of $S$ is equivalent to the geodesic convexity of $J^{+}(S)$. Thus using this last characterisation one can generalise the notion of convexity to non smooth hypersurfaces.

A $\pi_{1}(M)$-invariant Cauchy time function $T:\widetilde{M}\rightarrow\mathbb{R}^{*}_{+}$ is quasi-concave if its levels are convex. 
The cosmological time, the $CMC$ time and the $K$ time provide us important examples of quasi-concave times.
\begin{defi}
The cosmological time $T_{cos}$ is defined by: $T_{cos}(p)=\sup_{\alpha}\int\sqrt{-\left|\dot{\alpha}(s)\right|^{2}}$ where the supremum is taken over all the past causal curves starting at p.
\end{defi}
In the flat case the cosmological time is a concave (and hence quasi-concave) Cauchy time (see \cite{bonsante1}). By \cite{scannell1}, \cite{BBZ1} the cosmological time is a regular quasi-concave time in the de Sitter case. In the anti de Sitter case it false to be quasi-concave. However, by \cite{BBZ1} the cosmological levels are convex near the initial singularity.
\begin{defi}
The $CMC$ time is a $\pi_{1}(M)$-invariant  Cauchy time $T:\widetilde{M}\rightarrow \mathbb{R}$ such that every level $T^{-1}(t)$, if not empty, is of constant mean curvature $t$. 
\end{defi}
The existence and uniqueness of such time was studied in \cite{anderson2}, \cite{anderson6}, \cite{bbz4}, \cite{bbz5},\cite{BBZ1}. In the flat case and by a result of Treibergs \cite{triberg} the $CMC$ time is quasi-concave. It is no more true in the anti de Sitter case. Unfortunately we don't now if it is the case in the de Sitter case.

In the flat case, the $CMC$ time takes its values over $\mathbb{R}^{*}_{-}$. Up to the reparametrization $b\mapsto -\frac{1}{b}$, we will consider that the $CMC$ time takes its values in $\mathbb{R}^{*}_{+}$. In other words: for every $b>0$, the $CMC$ level $S^{T_{cmc}}_{b}$ is of constant mean curvature $-\frac{1}{b}$. 

\begin{defi}
Suppose that $M$ is of the dimension $2+1$. The $k$ time is a Cauchy time $T:\widetilde{M}\rightarrow \mathbb{R}$ such that every level $T^{-1}(t)$, if not empty, is of constant Gauss curvature $t$.
\end{defi}
Barbot, B\'eguin and Zeghib \cite{BBZ2} proved the existence and uniqueness of such time in the flat and de Sitter case. In the anti de Sitter case there is no  globally defined $k$-time. However, the two connected components of the convex core admit a unique $k$-time. By definition, the $k$-time is quasi-concave.

In the flat and the anti de Sitter cases, the $k$-time is defined over $\mathbb{R}^{*}_{-}$. Up to the reparametrization $b\mapsto \sqrt{-b^{-1}}$, we will consider that the $k$-time takes its values over $\mathbb{R}^{*}_{+}$. In the de Sitter case, the $k$-time is defined over $]-\infty,-1[$. So we will consider it defined over $\mathbb{R}^{*}_{+}$ up to the reparametrization $b\mapsto \sqrt{-(b+1)^{-1}}$.

Let $T:\widetilde{M}\rightarrow\mathbb{R}^{*}_{+}$ be a $\pi_{1}(M)$-invariant $C^{2}$ quasi-concave Cauchy time. Denote by $\xi_{T}=\frac{\nabla T}{\left|\nabla T\right|^{2}}$, where $\nabla T$ is the Lorentzian  gradient of $T$ and let $\Phi^{t}_{T}$ be the corresponding flow generated by $\xi_{T}$. Denote by $S^{T}_{1}$ the level set $T^{-1}(1)$ of $T$. 
\begin{prop}
\label{propositionmehdi1}
Let $\alpha :[a, b]\rightarrow\widetilde{M}$ be a spacelike curve contained in the past of $S^{T}_{1}$. Then the length of $\alpha$ is less than the length of $\alpha_{1}$ where $\alpha_{1}(s)=\Phi_{T}^{1-T(\alpha(s))}(\alpha(s))$ is the projection of $\alpha$ on $S^{T}_{1}$ along the lines of $\Phi_{T}$.
\end{prop}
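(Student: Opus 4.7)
The plan is to reduce the statement to the pointwise inequality $|\alpha_1'(s)| \geq |\alpha'(s)|$ for every $s \in [a,b]$ and then integrate. Write $\tau(s) := 1 - T(\alpha(s))$, which is $\geq 0$ since $\alpha$ lies in the past of $S^T_1$. First I would decompose $\alpha'(s)$ along the foliation of $\widetilde{M}$ by the levels of $T$:
\begin{equation*}
\alpha'(s) = V(s) + (T\circ\alpha)'(s)\,\xi_T(\alpha(s)),
\end{equation*}
where $V(s)$ is tangent to $S^T_{T(\alpha(s))}$ (this decomposition is forced by $dT(\xi_T)=1$). Because $\xi_T$ is timelike, $V(s)$ is spacelike, and $g(V(s),\xi_T)=0$, the Lorentzian square reads $|\alpha'(s)|^2 = |V(s)|^2 - ((T\circ\alpha)'(s))^2/|\nabla T|^2$, which gives $|V(s)| \geq |\alpha'(s)|$.

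Next I would differentiate $\alpha_1(s) = \Phi_T^{\tau(s)}(\alpha(s))$. Using the chain rule together with the fact that $\xi_T$ is invariant under its own flow, one obtains
\begin{equation*}
\alpha_1'(s) = D\Phi_T^{\tau(s)}(V(s)) + \bigl((T\circ\alpha)'(s) + \tau'(s)\bigr)\,\xi_T(\alpha_1(s)) = D\Phi_T^{\tau(s)}(V(s)),
\end{equation*}
the bracket vanishing precisely because $\tau'(s) = -(T\circ\alpha)'(s)$. So it is enough to show $|D\Phi_T^{\tau(s)}(V(s))| \geq |V(s)|$. Now $\Phi_T^t$ sends $S^T_a$ to $S^T_{a+t}$, hence for $V$ tangent to a level at $p$ the vector $V(t) := D\Phi_T^t(V)$ is tangent to the level at $\Phi_T^t(p)$. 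Writing $\xi_T = N/|\nabla T|$ with $N$ the future unit timelike normal and using $[\xi_T, V(t)] = 0$ together with $g(N,V) = 0$, a direct computation yields
\begin{equation*}
\tfrac{d}{dt}|V(t)|^2 = 2\,g\bigl(\nabla_{V(t)}\xi_T,\, V(t)\bigr) = \frac{2}{|\nabla T|}\,\Pi\bigl(V(t),V(t)\bigr) \geq 0,
\end{equation*}
the last inequality being exactly the quasi-concavity hypothesis on the levels of $T$. Since $\tau(s) \geq 0$, this gives $|V(\tau(s))| \geq |V(0)| = |V(s)|$, completing the pointwise estimate.

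The main subtlety is the cancellation in the second display: it is precisely the reparametrization $\tau(s) = 1 - T(\alpha(s))$ landing $\alpha_1$ on a single level that absorbs the $\xi_T$-component of $D\Phi_T^{\tau(s)}(\alpha'(s))$. Without this cancellation a timelike (hence negative-normed) $\xi_T$-term would survive in $|\alpha_1'(s)|^2$ and could destroy the inequality. Apart from that point, the argument is a direct Lorentzian expression of the fact that the gradient flow of a quasi-concave time function dilates the induced Riemannian metrics on its levels as $t$ increases, and then integrating in $s$ yields $L(\alpha) \leq L(\alpha_1)$.
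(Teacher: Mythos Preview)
Your argument is correct and is precisely the standard proof: decompose $\alpha'$ into level-tangent and $\xi_T$ components, observe that the $\xi_T$ component cancels when you project along the flow, and then use positivity of the second fundamental form to show that the flow dilates level-tangent vectors. The paper does not give an independent proof here; it simply invokes \cite[Proposition~4.2]{mehdi1} and remarks that the argument there uses neither flatness nor the dimension restriction, so your write-up is essentially a reconstruction of that cited proof.
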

\begin{proof}
We proved this proposition in the 2+1 flat case \cite[Proposition~4.2]{mehdi1}. The proof does not use the fact that space-time is flat of dimension $2+1$ and remains true in our case (see \cite[Remark~1.2]{mehdi1}).
\end{proof}
\begin{rmq}
Even if in Proposition \ref{propositionmehdi1} we restrict ourselves to $C^{2}$ quasi-concave times, one can prove analogue Propositions for the cosmological time, which is just $C^{1,1}$, in the Sitter and anti de Sitter cases (see Remark \ref{jeudivac1} and Remark \ref{jeudivac2}). 
\end{rmq}

\section{Quasi-concave times versus Cosmological time}
Let $M$  be a positive constant curvature $MGHC$ space-time of dimension $n+1$ and let $T_{cos}$ be the cosmological time of $\widetilde{M}$. The purpose of this section is to highlight the comparability between the cosmological time and the other quasi-concave times.

\subsection{The flat case}
Let us start with the following proposition which gives an estimation on the cosmological barriers in the flat $n+1$ dimensional case. 
\begin{prop}
\label{propositionmehdi2}
Let $M\simeq \Omega/\Gamma_{\tau}$ be a standard flat space-time, where $\Omega$ is a future complete flat regular domain, $\Gamma$ a torsion free uniform lattice of $SO^{+}(1,n)$ and $\Gamma_{\tau}$ its affine deformation in $SO^{+}(1,n)\ltimes \mathbb{R}^{1,n}$. Let $S$ be a convex complete $\Gamma$-invariant Cauchy hypersurface of $\Omega$. There is a constant  $C$ depending only on $\Gamma$ such that for every $C'>C$  $$\frac{\sup_{S}T_{cos}}{\inf_{S}T_{cos}}\leq C',$$ for $\inf_{S}T_{cos}$ big enough.
\end{prop}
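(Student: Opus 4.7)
My strategy is to derive a convexity-based inequality at the maximum point of $T_{cos}|_S$ and then exploit the cocompactness of $\Gamma$ acting on $\mathbb{H}^{n}$. Since $\Gamma_\tau$ acts properly discontinuously and cocompactly on $\Omega$ and preserves $S$, the quotient $S/\Gamma_\tau$ is compact, so $T_{cos}|_S$ attains $a_0:=\inf_S T_{cos}$ at some $p_0\in S$ and $b_0:=\sup_S T_{cos}$ at some $q_0\in S$. At the maximum $q_0$, the timelike character of $\nabla T_{cos}$ forces the future unit normal of $S$ at $q_0$ to coincide with $N_{q_0}$, and convexity of $S$ (i.e.\ $J^+(S)$ geodesically convex in $\Omega$) yields $\langle p-q_0, N_{q_0}\rangle \leq 0$ for every $p\in S$. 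Applying this at $p=p_0$ and substituting the Bonsante decompositions $p_0 = r(p_0)+a_0 N_{p_0}$, $q_0 = r(q_0)+b_0 N_{q_0}$, together with the Lorentzian identity $\langle N_{p_0}, N_{q_0}\rangle = -\cosh d_{\mathbb{H}^n}(N_{p_0}, N_{q_0})$ and $\langle N_{q_0},N_{q_0}\rangle=-1$, I obtain
\[
b_0 \;\leq\; a_0\cosh\!\bigl(d_{\mathbb{H}^n}(N_{p_0}, N_{q_0})\bigr) + \bigl\langle r(q_0)-r(p_0), N_{q_0}\bigr\rangle.
\]

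I next choose the lifts $p_0, q_0$ inside a single Dirichlet fundamental domain $K\subset S$ for $\Gamma_\tau$, constructed as the $N$-preimage of a Dirichlet domain of $\Gamma$ in $\mathbb{H}^n$ of hyperbolic diameter $D$ depending only on $\Gamma$; this forces $d_{\mathbb{H}^n}(N_{p_0}, N_{q_0})\leq D$. For the singular term, I use that $\Sigma/\Gamma_\tau$ is compact — being the continuous $\Gamma_\tau$-equivariant image of the compact quotient $\Omega/\Gamma_\tau$ under the retraction $r$ — so $\Sigma$ admits a compact fundamental region of bounded Euclidean diameter. Arranging $r(p_0), r(q_0)$ to lie in such a region, and using that $N_{q_0}$ has bounded Euclidean norm (as it lies in a fixed compact subset of $\mathbb{H}^n$), I bound the inner product $|\langle r(q_0)-r(p_0), N_{q_0}\rangle|$ by a constant $C_1$ independent of $S$ and of $a_0$. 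Setting $C:=\cosh(D)$, which depends only on $\Gamma$, the inequality becomes $b_0/a_0\leq C + C_1/a_0$, which for any $C'>C$ is $\leq C'$ as soon as $a_0 > C_1/(C'-C)$.

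The main obstacle is guaranteeing the uniform bound on the singular drift: a priori the $\Gamma_\tau$-translations needed to pull $N_{p_0}$ and $N_{q_0}$ into a common Dirichlet domain of $\Gamma$ in $\mathbb{H}^n$ could send the retractions $r(p_0), r(q_0)$ arbitrarily far apart in the ambient Euclidean metric, since $\Sigma$ itself has infinite diameter. The resolution is exactly that $\Gamma_\tau$-cocompactness on $\Omega$ transports through the continuous equivariant retraction to a cocompact $\Gamma_\tau$-action on $\Sigma$, so the singular drift is controlled by a fundamental region of finite Euclidean size (depending on the whole datum $\Gamma_\tau$ but not on $S$) and is therefore negligible compared to the leading term $a_0\cosh(D)$, leaving $C=\cosh(D)$ as an asymptotic constant depending only on $\Gamma$.
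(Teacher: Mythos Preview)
Your argument is correct and follows essentially the same route as the paper: both exploit the support hyperplane of $S$ at the point where $T_{cos}|_S$ is maximal, expand via the Bonsante decomposition $p=r(p)+T_{cos}(p)N_p$, and control the two resulting terms by confining the cosmological normals to a compact fundamental domain $F\subset\mathbb{H}^n$ for $\Gamma$ and the retractions to the compact set $F'=r(N^{-1}(F))\subset\Sigma$; the paper's constant $C=\sup_{F\times F}|\langle n,n'\rangle|$ is exactly your $\cosh(D)$. One small slip: $\Omega/\Gamma_\tau=M$ is \emph{not} compact, so your justification of the compactness of $\Sigma/\Gamma_\tau$ needs adjusting---use instead that $N$ restricted to a cosmological level $S^{T_{cos}}_1$ is proper, whence $F'=r\bigl(N^{-1}(F)\cap S^{T_{cos}}_1\bigr)$ is the continuous image of a compact set.
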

\begin{proof}
Fix an origin of the Minkowski space $\mathbb{R}^{1,n}$. Let $N$ and $r$ be respectively the normal application and the retraction map of $\Omega$. 

For simplicity denote by $a=\sup_{S}T_{cos}$ and by $b=\inf_{S}T_{cos}$. Let $F\subset \mathbb{H}^{n}$ be a compact fundamental domain for the action of $\Gamma$ on $\mathbb{H}^{n}$. Note that $F'=r(N^{-1}(F'))$ is a fundamental domain for the action of $\Gamma_{\tau}$ on $\Sigma$. The closure of $F'$ in $\mathbb{R}^{1,n}$ is compact. Denote then by $C_{1}=\sup_{F'\times F'\times F}\left|\left\langle r_{1}-r_{2}, n\right\rangle\right|$. 

Now let $p\in S$ such that $T_{cos}(p)=a$. Up to isometry we can suppose that $N_{p}\in F$ and $r(p)\in F'$. The convexity of $S$ implies that the tangent hyperplane $P_{p}$ to $S$ at $p$ is the tangent hyperplane to $S^{T_{cos}}_{a}$ at $p$. Thus for every $\gamma$ in $\Gamma$, $\gamma_{\tau}.P_{p}$ is the tangent hyperplane of $S$ and $S^{T_{cos}}_{a}$ at $\gamma_{\tau}.p$. Hence, we obtain that for every $x$ in $S$ and every $\gamma$ in $\Gamma$: $$\left\langle \gamma_{\tau}p-x,\gamma.N_{p}\right\rangle\geq 0$$  But  $\gamma_{\tau}p=\gamma.p+\tau(\gamma)$, $x=r(x)+T_{cos}(x)N_{x}$ and $p=r(p)+T_{cos}(p)N_{p}$, so $$T_{cos}(x)\left\langle N_{x},\gamma.N_{p}\right\rangle\leq \left\langle p,N_{p}\right\rangle-\left\langle \gamma^{-1}r(x)+\tau(\gamma^{-1}),N_{p}\right\rangle$$ Therefore
$$\left|\frac{1}{\left\langle \gamma^{-1}.N_{x}, N_{p}\right\rangle}\right|-\frac{1}{a}\frac{\left\langle r(p)-r(\gamma_{\tau}^{-1}x),N_{p}\right\rangle}{\left|\left\langle \gamma^{-1}.N_{x}, N_{p}\right\rangle\right|}\leq \frac{T_{cos}(x)}{a}$$

On the other hand, for every $x$ in $S$, there exists a $\gamma_{x}$ in $\Gamma$ such that $\gamma_{x}^{-1}.N_{x}\in F$ and $r((\gamma_{x})_{\tau}^{-1}x)\in F'$.  Thus,

$$\frac{1}{C}-\frac{1}{a}C_{1}\leq\frac{T_{cos}(x)}{a},$$ where $C=\sup_{F\times F}\left|\left\langle n,n'\right\rangle\right|$

Since the last inequality is true for every $x$ in $S$, we obtain that
$$\frac{1}{C}-\frac{1}{a}C_{1}\leq\frac{b}{a}$$

And this finishes the proof.
\end{proof}

As a direct consequence of this proposition we obtain:
\begin{cor}
Let $T:\Omega\rightarrow ]0,+\infty[$ be a $\Gamma$-invariant quasi-concave Cauchy time. Then $$\lim_{b\rightarrow \infty} \frac{\sup_{S^{T}_{b}}T_{cos}}{\inf_{S^{T}_{b}}T_{cos}}\leq C$$
\end{cor}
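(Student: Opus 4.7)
The plan is to reduce the corollary directly to Proposition~\ref{propositionmehdi2} applied level by level. For any $b>0$, the level $S^T_b$ is a complete convex $\Gamma$-invariant Cauchy hypersurface of $\Omega$: convexity comes from the quasi-concavity of $T$, while $\Gamma$-invariance and completeness come from the fact that $T$ is a $\Gamma$-invariant Cauchy time (so $S^T_b$ descends to a Cauchy hypersurface in the compact space-time $M\simeq \Omega/\Gamma_{\tau}$, which is compact and hence complete for the induced Riemannian metric). Thus the hypotheses of Proposition~\ref{propositionmehdi2} are satisfied uniformly by every member of the family $(S^T_b)_{b>0}$.

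The only thing that needs to be checked is that the ``$\inf_{S^T_b} T_{cos}$ big enough'' condition from Proposition~\ref{propositionmehdi2} is eventually met along $b\to+\infty$, i.e. that
\[
\inf_{S^T_b} T_{cos}\;\xrightarrow[\,b\to+\infty\,]{}\;+\infty.
\]
To prove this, one argues by contradiction: suppose there exist $K>0$, a sequence $b_{n}\to+\infty$ and points $p_{n}\in S^T_{b_{n}}$ with $T_{cos}(p_{n})\le K$. Since both $T$ and $T_{cos}$ are $\Gamma_{\tau}$-invariant, they descend to continuous functions $\bar{T},\bar{T}_{cos}:M\to\mathbb{R}^{*}_{+}$, and $\bar{T}_{cos}$ is a proper Cauchy time on the compact quotient $M$. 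The projected sequence $\bar{p}_{n}$ therefore lies in the compact sublevel set $\{\bar{T}_{cos}\le K\}$, so after passing to a subsequence we have $\bar{p}_{n}\to \bar{p}_{\infty}\in M$. But then by continuity $\bar{T}(\bar{p}_{n})\to \bar{T}(\bar{p}_{\infty})<+\infty$, contradicting $\bar{T}(\bar{p}_{n})=b_{n}\to+\infty$.

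Once this divergence is established, the conclusion is immediate: fix $C'>C$; by Proposition~\ref{propositionmehdi2} there is a threshold $B>0$ such that any $\Gamma$-invariant convex complete Cauchy hypersurface $S$ with $\inf_{S}T_{cos}\ge B$ satisfies $\sup_{S}T_{cos}/\inf_{S}T_{cos}\le C'$. Applying this to $S=S^T_b$ for $b$ so large that $\inf_{S^T_b}T_{cos}\ge B$ and then taking $\limsup$ as $b\to+\infty$ gives $\limsup_{b\to+\infty} \sup_{S^T_b}T_{cos}/\inf_{S^T_b}T_{cos}\le C'$, and since $C'>C$ is arbitrary, the desired bound $\limsup_{b\to+\infty} \sup_{S^T_b}T_{cos}/\inf_{S^T_b}T_{cos}\le C$ follows.

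The only mildly delicate step is the divergence $\inf_{S^T_b}T_{cos}\to+\infty$; everything else is a formal application of the previous proposition. No new geometric input beyond compactness of $M$, properness of the cosmological time on $M$, and the $\Gamma$-invariance of $T$ is needed, so I do not expect any serious obstacle.
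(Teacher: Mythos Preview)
Your approach matches the paper's: the corollary is stated there as ``a direct consequence'' of Proposition~\ref{propositionmehdi2}, with no separate proof, so your reduction to that proposition applied to each level $S^T_b$ is exactly the intended argument.

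There is one small slip in your divergence step. The sublevel set $\{\bar{T}_{cos}\le K\}\subset M$ is \emph{not} compact: it is homeomorphic to $S\times(0,K]$ and accumulates on the initial singularity. So you cannot extract a convergent subsequence from $\bar p_n$ using that set alone. The easy fix is to use that $T$ is increasing along future causal curves: if $T_{cos}(p_n)\le K$ then $p_n$ lies in the causal past of the compact Cauchy hypersurface $S^{T_{cos}}_K$ in $M$, hence $T(p_n)\le \max_{S^{T_{cos}}_K} T<+\infty$, contradicting $T(p_n)=b_n\to+\infty$. Alternatively, bound $T_{cos}(p_n)$ from below as well (since $b_n\ge b_1$ forces $T_{cos}(p_n)\ge \inf_{S^T_{b_1}}T_{cos}>0$) and work in the genuinely compact slab $\{c\le \bar T_{cos}\le K\}$. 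With either correction your argument goes through.
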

\begin{rmq}
\label{remarmehdi3}
By a result of Andersson, Barbot, Béguin and Zeghib \cite{BBZ1} we have that in the particular case of the $CMC$ time : $\frac{\sup_{S^{T}_{b}}T_{cos}}{\inf_{S^{T}_{b}}T_{cos}}\leq n$ for every $b>0$. Moreover,  $$\frac{1}{n}\sup_{S^{T}_{b}}T_{cos}\leq b\leq \sup_{S^{T}_{b}}T_{cos}$$
\end{rmq}

\begin{prop}
\label{propmehdi3}
Let $M\simeq \Omega/\Gamma$ be a non elementary future complete $MGHC$ flat space-time of dimension $2+1$ and let $T_{k}:\Omega\rightarrow ]0,+\infty[$ be the $k$-time of $\Omega$. The cosmological time and the $k$-time are comparable near the infinity. Moreover $$\lim_{b\rightarrow +\infty} \frac{\inf_{S^{T_{k}}_{b}}T_{cos}}{b}=\lim_{b\rightarrow +\infty} \frac{\sup_{S^{T_{k}}_{b}}T_{cos}}{b}=1.$$
\end{prop}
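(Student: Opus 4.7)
The plan is to compare the Gauss curvature of $S^{T_k}_b$ with that of the cosmological levels at the points where $T_{cos}|_{S^{T_k}_b}$ attains its supremum and infimum, via the maximum principle for tangent convex Cauchy hypersurfaces, and then to use Bonsante's description of the retraction to obtain a uniform bound on an error term.

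Each cosmological level admits the Gauss parametrisation $p(N)=\rho(N)+aN$ with $N\in\mathbb{H}^{2}$, where $\rho:\mathbb{H}^{2}\to\Sigma$ is the retraction read through the Gauss image; a direct computation (identifying parallel $2$-planes orthogonal to $N$) shows that the shape operator of $S^{T_{cos}}_{a}$ at $p(N)$ equals $-(d\rho_{N}+a\,\mathrm{Id})^{-1}$, so its principal curvatures are $1/(a+\nu_{i}(N))$ for the eigenvalues $\nu_{i}(N)\geq 0$ of $d\rho_{N}$. In particular the Gauss curvature satisfies
\[
\bigl|K^{T_{cos}}(p)\bigr|=\frac{1}{(a+\nu_{1})(a+\nu_{2})}\leq \frac{1}{a^{2}},
\]
with equality in the static case. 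Because $\rho$ is $\Gamma$-equivariant and $\Sigma$ has bounded diameter modulo $\Gamma$, Bonsante's construction yields a uniform bound $\nu_{i}\leq C(\Gamma)$.

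At a point $p_{+}\in S^{T_k}_{b}$ where $T_{cos}|_{S^{T_k}_{b}}$ attains its maximum $a_{+}$ (such a point exists since $T_{cos}$ is $\Gamma$-invariant and the $k$-level is compact in $M$), the hypersurfaces $S^{T_k}_{b}$ and $S^{T_{cos}}_{a_{+}}$ are tangent with common future unit normal, and $S^{T_k}_{b}$ lies locally in the past of $S^{T_{cos}}_{a_{+}}$. The maximum principle for tangent convex spacelike hypersurfaces compares shape operators, hence Gauss curvatures:
\[
\frac{1}{b^{2}}=\bigl|K^{T_{k}}(p_{+})\bigr|\leq \bigl|K^{T_{cos}}(p_{+})\bigr|\leq \frac{1}{a_{+}^{2}},
\]
so $a_{+}\leq b$. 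At the point $p_{-}$ where $T_{cos}|_{S^{T_k}_{b}}$ attains its minimum $a_{-}$ the inclusion reverses, $S^{T_k}_{b}$ lies locally in the future of $S^{T_{cos}}_{a_{-}}$, and the same principle gives
\[
\frac{1}{b^{2}}\geq \bigl|K^{T_{cos}}(p_{-})\bigr|=\frac{1}{(a_{-}+\nu_{1}(p_{-}))(a_{-}+\nu_{2}(p_{-}))},
\]
which combined with $\nu_{i}\leq C(\Gamma)$ yields $(a_{-}+C(\Gamma))^{2}\geq b^{2}$, i.e. $a_{-}\geq b-C(\Gamma)$.

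Putting the two inequalities together one has $b-C(\Gamma)\leq a_{-}\leq a_{+}\leq b$, from which the comparability of $T_{cos}$ and $T_{k}$ near infinity is immediate, and dividing by $b$ and letting $b\to\infty$ yields
\[
\lim_{b\to\infty}\frac{\inf_{S^{T_k}_{b}}T_{cos}}{b}=\lim_{b\to\infty}\frac{\sup_{S^{T_k}_{b}}T_{cos}}{b}=1,
\]
as required. The main technical obstacle is the maximum-principle comparison, since the cosmological levels are only $C^{1,1}$ and the retraction $\rho$ need not be $C^{1}$ at points lying over the singular locus of $\Sigma$; this is handled by the usual weak (Alexandrov-type) comparison of second fundamental forms for convex spacelike hypersurfaces, together with the uniform bound $\nu_{i}\leq C(\Gamma)$ extracted from the bounded geometry of $\Sigma$ modulo $\Gamma$.
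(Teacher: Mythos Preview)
Your upper bound $\sup_{S^{T_k}_b}T_{cos}\leq b$ is correct and coincides with the paper's argument: at the tangency point the maximum principle gives $1/b^{2}\leq |K^{T_{cos}}(p_{+})|\leq 1/a_{+}^{2}$.

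The lower bound, however, has a genuine gap. The assertion that the eigenvalues $\nu_{i}$ of $d\rho_{N}$ are uniformly bounded by some $C(\Gamma)$ is false. In dimension $2+1$ the initial singularity $\Sigma$ is a real tree; over each edge of $\Sigma$ the cosmological level $S^{T_{cos}}_{a}$ contains a ruled (flat-in-one-direction) strip on which one principal curvature vanishes, so that $|K^{T_{cos}}|=0$ there. In your formula this corresponds to $\nu_{i}=+\infty$. Bounded diameter of $\Sigma/\Gamma$ says nothing about this second--order degeneracy, and there is no reason the minimum point $p_{-}$ should avoid such a strip: the maximum principle at $p_{-}$ only forces the principal curvatures of $S^{T_{cos}}_{a_{-}}$ to lie below those of $S^{T_k}_{b}$, which is compatible with one of them being $0$. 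Hence the inequality $1/b^{2}\geq |K^{T_{cos}}(p_{-})|$ carries no information and the chain $(a_{-}+C)^{2}\geq b^{2}$ does not follow. (The Gauss parametrisation $p(N)=\rho(N)+aN$ is itself ill-posed on those strips, since the Gauss map $S^{T_{cos}}_{a}\to\mathbb{H}^{2}$ is not injective there.)

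The paper circumvents exactly this difficulty: instead of comparing with the genuine cosmological levels, it introduces the auxiliary cosmological time $T'_{cos}$ of the convex domain $A=J^{+}(S^{T_k}_{1})$. The base hypersurface $S^{T_k}_{1}$ is smooth with principal curvatures satisfying $\lambda_{1}\lambda_{2}=1$, so the parallel hypersurfaces $S^{T'_{cos}}_{a}$ have the explicit nondegenerate curvature $k_{S^{T'_{cos}}_{a}}=-\bigl(1-2H_{S^{T_k}_{1}}(r'(p))\,a+a^{2}\bigr)^{-1}$, where $H_{S^{T_k}_{1}}$ is bounded by cocompactness. Applying the maximum principle at the minimum of $T'_{cos}$ on $S^{T_k}_{b}$ then yields a genuine lower bound $a\geq H_{0}+\sqrt{H_{1}^{2}-1+b^{2}}$, and one concludes via $\inf_{S^{T_k}_{b}}T_{cos}\geq a$. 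The moral is that the ``uniform curvature bound from below'' you need is obtained not from the geometry of the singular boundary $\partial\Omega$ but by replacing it with a smooth $\Gamma$-invariant convex hypersurface.
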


For the proof we need the following Maximum Principle. 

\begin{lemm}
Let $S$ and $S'$ two spacelike hypersurfaces in a space-time $M$ such that $S'$ is in the future of $S$ and $S\cap S'\neq  \varnothing$. For every $p\in S\cap S'$ we have that the principal curvatures of $S$ at $p$ are bigger than the principal curvatures of $S'$ at $p$. In particular the Gauss curvature of $S$ is bigger than the Gauss curvature of $S'$.
\end{lemm}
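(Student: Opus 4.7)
The plan is to localize the comparison to a Hessian ordering at $p$. First, I would argue that $T_p S = T_p S'$ and that the two future-pointing unit normals agree at $p$. Both hypersurfaces are $C^2$ and spacelike and $S'$ lies in $J^+(S)$ near $p$; if $T_pS \neq T_pS'$ the two hypersurfaces would cross transversely at $p$, and $S'$ would contain points in $J^-(S)$ on one side of $p$, contradicting $S' \subset J^+(S)$.

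Next, I would introduce Gaussian normal coordinates at $p$ adapted to the common tangent plane $T_pS$. A neighborhood of $p$ in $M$ is then isometric to an open subset of $\mathbb{R} \times T_pS$ equipped with a metric of the form $-dt^2 + h_t$, where $(h_t)_t$ is a smooth family of Riemannian metrics on $T_pS$ with $h_0$ equal to the induced metric. In these coordinates $S$ and $S'$ are graphs $t = f(x)$ and $t = g(x)$ of $C^2$ functions defined near $0 \in T_pS$. By the tangency just established, $f(0) = g(0) = 0$ and $df(0) = dg(0) = 0$, and the causal ordering $S' \subset J^+(S)$ localizes to $f \leq g$ on a neighborhood of $0$.

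The key computation is the formula for the second fundamental form of a spacelike graph at a critical point of the graph function: one obtains
\[
\Pi(X,Y) \;=\; \mathrm{Hess}(\varphi)(X,Y) + A(X,Y),
\]
where $A$ depends only on $\partial_t h|_{t=0}$ at the base point, hence only on the ambient geometry at $p$, and not on the graph function $\varphi$. Applied to $f$ and $g$ the correction $A$ is the same, so at $p$
\[
\Pi_S - \Pi_{S'} \;=\; \mathrm{Hess}(f-g)(0),
\]
up to a global sign fixed by the choice of normal. Since $f - g$ has a zero extremum at $0$, this Hessian is semidefinite with the correct sign, and the Courant--Fischer minimax characterization of eigenvalues converts the semidefinite inequality into the claimed ordering of principal curvatures. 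The Gauss curvature inequality in dimension $2$ then follows from $k = -\lambda_1\lambda_2$ by taking products, the signs of the $\lambda_i$ being controlled in the intended applications by quasi-concavity.

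The only delicate point I anticipate is the bookkeeping of signs: the orientation of the unit normal, the sign $\Pi = \pm\mathrm{Hess}(\varphi) + A$ in the graph formula, and the relation $k = -\lambda_1\lambda_2$ between the Gauss curvature and the principal curvatures used by the author. Once these are fixed consistently, the argument is the standard geometric maximum principle for spacelike hypersurfaces, applied tangentially at $p$.
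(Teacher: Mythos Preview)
The paper does not give a proof of this lemma; it is quoted as the classical Maximum Principle and used as a black box in the proofs of Propositions~\ref{propmehdi3} and~\ref{propbelraoutidesitteralg}. Your argument via Gaussian normal coordinates, writing both hypersurfaces as graphs $t=f(x)$ and $t=g(x)$ tangent at the origin with $f\le g$, and reducing the comparison of second fundamental forms to the semidefiniteness of $\mathrm{Hess}(f-g)(0)$, is exactly the standard proof of this principle and is correct.

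One remark on the sign bookkeeping you flagged. With the paper's convention (quasi-concave $\Leftrightarrow$ $\Pi>0$, i.e.\ $\Pi$ agrees with $\mathrm{Hess}(\varphi)$ at a critical point for a graph $t=\varphi(x)$), your computation actually yields $\Pi_S \le \Pi_{S'}$, so the principal curvatures of the \emph{past} surface $S$ are \emph{smaller}, not bigger as the lemma literally states. This is consistent with the geometry (the surface touching from below is flatter) and, combined with $k=-\lambda_1\lambda_2$ and $\lambda_i\ge 0$, it gives $k_S\ge k_{S'}$, which is precisely the inequality the paper uses in the applications. So the first clause of the lemma appears to be misstated in the paper, but the Gauss curvature conclusion, and your derivation of it, are correct.
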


\textbf{Proof of Proposition \ref{propmehdi3}}.
Let $S^{T_{k}}_{1}$ be the $k$-level of constant Gauss curvature $-1$. Let $H_{0}=\inf H_{S^{T_{k}}_{1}}$ and $H_{1}=\sup H_{S^{T_{k}}_{1}}$, where $H_{S^{T_{k}}_{1}}$ is the $CMC$ curvature of $S^{T_{k}}_{1}$.

Consider the $\Gamma$-invariant future complete convex domain $A:=J^{+}(S^{T_{k}}_{1})$. Denote respectively by $T'_{cos}$, $r'$ the associated cosmological time and retraction map. 
For every $b>1$, the $\Gamma$-invariant $k$-level $S^{T_{k}}_{b}$ is entirely contained in $A$. As the action of $\Gamma$ on $S^{T_{k}}_{b}$ is cocompact, the cosmological time $T'_{cos}$ of $A$ achieve its minimum on $S^{T_{k}}_{b}$. Let $p\in S^{T_{k}}_{b}$ such that $\inf_{S^{T_{k}}_{b}}T'_{cos}=T'_{cos}(p):=a$. By applying the Maximum Principle to the hypersurfaces $S^{T'_{cos}}_{a}$ and $S^{T_{k}}_{b}$ we get $$k_{S^{T'_{cos}}_{a}}(p)\geq -\frac{1}{b^{2}},$$ where $k_{S^{T'_{cos}}_{a}}$ is the Gauss curvature of $S^{T'_{cos}}_{a}$.

On the one hand we have $$k_{S^{T'_{cos}}_{a}}(p)=-\frac{1}{1-2H_{S^{T_{k}}_{1}}(r'(p))a+a^{2}}$$ Hence $$a\geq H_{0}+\sqrt{H_{1}^{2}-1+b^{2}}$$
But $$\inf_{S^{T_{k}}_{b}}T_{cos}\geq \inf_{S^{T'_{cos}}_{a}}T_{cos}\geq a$$
So $$\inf_{S^{T_{k}}_{b}}T_{cos}\geq H_{0}+\sqrt{H_{1}^{2}-1+b^{2}}$$

On the other hand and by applying the Maximum Principle to the hypersurfaces $S^{T_{k}}_{b}$ and $S^{T_{cos}}_{\sup_{S^{T_{k}}_{b}}T_{cos}}$ we get  $$\sup_{S^{T_{k}}_{b}}T_{cos}\leq b$$

Thus $$1\geq\frac{\sup_{S^{T_{k}}_{b}}T_{cos}}{b}\geq\frac{\inf_{S^{T_{k}}_{b}}T_{cos}}{b}\geq \frac{H_{0}}{b}+\sqrt{\frac{H_{1}^{2}}{b^{2}}-\frac{1}{b^{2}}+1}$$ which concludes the proof.
\fin
\begin{cor}
\label{cormehdi3}
We have:
$$\lim_{b\rightarrow +\infty} \frac{\inf_{S^{T_{cmc}}_{b}}T_{cos}}{b}=\lim_{b\rightarrow +\infty} \frac{\sup_{S^{T_{cmc}}_{b}}T_{cos}}{b}=1.$$
\end{cor}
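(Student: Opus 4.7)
The plan is to mimic the proof of Proposition~\ref{propmehdi3}, applying the Maximum Principle twice to sandwich $T_{cos}/b$ around $1$ on the CMC level: first to pin down $\sup T_{cos}$ from above by direct comparison with cosmological levels, then to bound $\inf T_{cos}$ from below via a comparison with $k$-levels and Proposition~\ref{propmehdi3}.

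\textbf{Upper bound on the supremum.} Let $p\in S^{T_{cmc}}_{b}$ attain $a:=\sup_{S^{T_{cmc}}_{b}} T_{cos}$. Then $S^{T_{cmc}}_{b}\subset J^{-}(S^{T_{cos}}_{a})$ near $p$, with tangency at $p$, so by the Maximum Principle the principal curvatures of $S^{T_{cmc}}_{b}$ at $p$ are bigger than those of $S^{T_{cos}}_{a}$. In the non-elementary $(2+1)$-flat setting the CMC level is strictly convex, which rules out tangency at a pleated/flat point of the $C^{1,1}$ cosmological level; at a smooth point of $S^{T_{cos}}_{a}$ both principal curvatures equal $-\tfrac{1}{a}$, the level being locally isometric to a piece of hyperboloid of Lorentzian radius $a$. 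Taking the trace yields
\[
-\tfrac{2}{b}=\lambda^{cmc}_{1}(p)+\lambda^{cmc}_{2}(p)\;\geq\;-\tfrac{2}{a},
\]
hence $a\leq b$. Combined with $a\geq b$ from Remark~\ref{remarmehdi3}, this gives $\sup_{S^{T_{cmc}}_{b}} T_{cos}=b$ for every $b>0$.

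\textbf{Lower bound on the infimum.} I claim $S^{T_{cmc}}_{b}\subset J^{+}(S^{T_{k}}_{b})$. Let $q\in S^{T_{cmc}}_{b}$ achieve $c:=\inf_{S^{T_{cmc}}_{b}} T_{k}$; the surfaces $S^{T_{k}}_{c}$ and $S^{T_{cmc}}_{b}$ are tangent at $q$ with $S^{T_{k}}_{c}$ lying below. The Maximum Principle gives, after taking traces, $|H^{k}(q)|\leq \tfrac{1}{b}$. On the other hand, $K_{S^{T_{k}}_{c}}=-\tfrac{1}{c^{2}}$ combined with the AM--GM inequality $H^{2}\geq -K$ forces $|H^{k}|\geq \tfrac{1}{c}$ everywhere on the $k$-level, so $\tfrac{1}{c}\leq |H^{k}(q)|\leq \tfrac{1}{b}$ and therefore $c\geq b$. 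Consequently $\inf_{S^{T_{cmc}}_{b}} T_{cos}\geq \inf_{S^{T_{k}}_{b}} T_{cos}$, and Proposition~\ref{propmehdi3} yields
\[
\liminf_{b\to\infty}\frac{\inf_{S^{T_{cmc}}_{b}} T_{cos}}{b}\geq 1.
\]
Since $\inf T_{cos}\leq \sup T_{cos}=b$ from the first step, we conclude $\lim_{b\to\infty}\inf T_{cos}/b=1$ as well.

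The main obstacle is the regularity point in the first step: one has to argue that the $T_{cos}$-maximum of the (smooth) CMC level lies on the smooth locus of the $C^{1,1}$ cosmological level, so that the principal-curvature values $-\tfrac{1}{a}$ are available for the Maximum Principle. This relies on strict convexity of the CMC levels in the non-elementary $(2+1)$-flat case, together with the local description of the smooth part of the cosmological level as a piece of a Lorentzian hyperboloid; the bound on the infimum is then a soft corollary via Proposition~\ref{propmehdi3}.
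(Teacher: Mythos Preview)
Your lower-bound step coincides with the paper's: the paper also notes $k_{S^{T_{cmc}}_{b}}\geq -1/b^{2}$ (the AM--GM inequality you use) and then cites \cite[Remark~10.3]{BBZ2} to place $S^{T_{cmc}}_{b}$ in the future of $S^{T_{k}}_{b}$, concluding via Proposition~\ref{propmehdi3} and Remark~\ref{remarmehdi3}. Your explicit Maximum-Principle derivation of that future inclusion is correct and just makes the cited step self-contained.

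The upper-bound step, however, rests on a false assertion. It is \emph{not} true that at a smooth point of $S^{T_{cos}}_{a}$ both principal curvatures equal $-1/a$, nor that the level is locally a piece of hyperboloid: in a non-static regular domain the cosmological level carries flat directions coming from the initial singularity, and its principal curvatures fill the whole interval $[-1/a,0]$. Your regularity manoeuvre is also unjustified---strict convexity of the CMC level does not prevent $C^{1}$ tangency at a non-$C^{2}$ point of $S^{T_{cos}}_{a}$. Both issues vanish if you compare directly with the \emph{smooth} hyperboloid $H_{a}=\{x:|x-r(p)|^{2}=-a^{2}\}$ centred at the retraction point: for every $q\in S^{T_{cmc}}_{b}$ with $r(p)\in J^{-}(q)$ one has $d_{Lor}(q,r(p))\leq T_{cos}(q)\leq a$, so $S^{T_{cmc}}_{b}\subset J^{-}(H_{a})$ with tangency at $p$, and the Maximum Principle against $H_{a}$ gives $\lambda_{i}^{cmc}(p)\geq -1/a$ honestly. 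With that correction your trace argument yields $\sup_{S^{T_{cmc}}_{b}}T_{cos}\leq b$, hence (with Remark~\ref{remarmehdi3}) $\sup_{S^{T_{cmc}}_{b}}T_{cos}=b$ exactly---slightly sharper than what the paper states.
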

\begin{proof}
Let $S^{T_{cmc}}_{b}$ be a $CMC$ level of constant mean curvature $-\frac{1}{b}$. We have  $k_{S^{T}_{b}}\geq-\frac{1}{b^{2}}$. Then by \cite[Remark~10.3]{BBZ2}, $S^{T_{cmc}}_{b}$ is in the future of the $k$-level $S^{T_{k}}_{b}$. We conclude using Proposition \ref{propmehdi3} and Remark \ref{remarmehdi3}. 
\end{proof} 

For every $a>0$, let $\Omega_{a}$ be the regular domain defined by $\Omega_{a}:=\frac{1}{a}\Omega$. Note that $\Omega_{a}$ is the regular domain associated to the cocycle $\frac{\tau}{a}$. The regular domain $\Omega_{a}$ converge when $a$ goes to $\infty$ to  the cone $\mathbf{C}$. Denote by $T^{a}_{cos}$, $T_{k}^{a}$ and $T_{cmc}^{a}$ respectively the cosmological time, the $k$-time and the $CMC$ time of $\Omega_{a}$. It is not hard to see that  $aT^{a}(x)=T^{1}(a x)$ for each of the three times. 
\begin{cor}
The Cauchy times $T_{k}^{a}$ (respectively $T_{cmc}^{a}$) converge in the compact open topology, when $a$ goes to $+\infty$, to the cosmological time of $\mathbf{C}$. That is for every compact $F$ of $\mathbf{C}$ and for $a$ big enough, the Cauchy time $T_{k}^{a}$(respectively $T_{cmc}^{a}$) converge uniformly on $F$ to the cosmological time of $\mathbf{C}$. 
\end{cor}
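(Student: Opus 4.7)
The plan is to reduce the convergence of $T_{k}^{a}$ and $T_{cmc}^{a}$ to the convergence of the cosmological time $T_{cos}^{a}$, using the comparison estimates established in Proposition \ref{propmehdi3} and Corollary \ref{cormehdi3}.

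\textbf{Step 1 (Cosmological time convergence).} First I would show that for any compact $F \subset \mathbf{C}$ one has $F \subset \Omega_{a}$ for $a$ large, and $T_{cos}^{a} \to T_{cos}^{\infty}$ uniformly on $F$, where $T_{cos}^{\infty}(x) = \sqrt{-\langle x, x \rangle}$ is the cosmological time of the static space-time $\mathbf{C}$ (whose initial singularity reduces to the apex $0$). Using the scaling identity $T_{cos}^{a}(x) = \frac{1}{a}T_{cos}^{1}(ax)$ and Bonsante's decomposition $p = r(p) + T_{cos}^{1}(p) N_{p}$, this reduces to proving that the rescaled retraction $r_{a}(x) := r_{1}(ax)/a$ tends to $0$ uniformly for $x \in F$. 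The lower bound $\liminf_{a} T_{cos}^{a}(x) \geq T_{cos}^{\infty}(x)$ is immediate: picking any fixed $p_{0} \in \partial \Omega$, the point $p_{0}/a \in \partial \Omega_{a}$ converges to $0$ and therefore lies in $J^{-}(x)$ for $a$ large (since $x$ is strictly inside $\mathbf{C}$), whence $T_{cos}^{a}(x) \geq |x - p_{0}/a| \to \|x\|$ uniformly on $F$. For the upper bound, I would use the Hausdorff convergence $\partial \Omega_{a} \to \partial \mathbf{C}$ on bounded regions to find a displaced lightcone $\mathbf{C}_{q_{a}}$ with apex $q_{a} \to 0$ containing $\Omega_{a}$ in a suitable neighbourhood of $F$, so that $T_{cos}^{a}(x) \leq |x - q_{a}| \to \|x\|$.

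\textbf{Step 2 (Reduction of $T_{k}^{a}$ and $T_{cmc}^{a}$).} The proof of Proposition \ref{propmehdi3} furnishes two pieces of information: the pointwise inequality $T_{cos}^{1}(y) \leq T_{k}^{1}(y)$ valid for every $y \in \Omega$ (coming from $\sup_{S^{T_{k}^{1}}_{b}} T_{cos}^{1} \leq b$), and the uniform asymptotic $T_{k}^{1}(y)/T_{cos}^{1}(y) \to 1$ as $T_{cos}^{1}(y) \to +\infty$. From the scaling relations $T_{k}^{a}(x) = \frac{1}{a}T_{k}^{1}(ax)$ and $T_{cos}^{a}(x) = \frac{1}{a}T_{cos}^{1}(ax)$, together with the fact that Step 1 gives $T_{cos}^{a}(x) \to T_{cos}^{\infty}(x)$ uniformly with a strictly positive uniform lower bound on the compact $F \subset \mathbf{C}$, one deduces $T_{cos}^{1}(ax) = a\, T_{cos}^{a}(x) \to +\infty$ uniformly on $F$. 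Consequently $T_{k}^{a}(x)/T_{cos}^{a}(x) = T_{k}^{1}(ax)/T_{cos}^{1}(ax) \to 1$ uniformly on $F$, and combining with Step 1 yields $T_{k}^{a}(x) \to T_{cos}^{\infty}(x)$ uniformly. Replacing Proposition \ref{propmehdi3} by Corollary \ref{cormehdi3} gives the analogous conclusion for $T_{cmc}^{a}$.

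\textbf{Main obstacle.} The delicate step is the upper bound in Step 1: controlling the rescaled retraction $r_{a}(x)$ despite the fact that the initial singularity $\Sigma$ of $\Omega$ is typically unbounded inside $\mathbb{R}^{1,n}$. The crucial geometric observation is that $r_{a}(x)$ is constrained to lie in $\partial \Omega_{a} \cap J^{-}(x)$, and for $x$ ranging over a fixed compact $F$ strictly inside $\mathbf{C}$, this intersection stays in a bounded region of Minkowski space determined only by $F$; on this bounded region Hausdorff convergence $\partial \Omega_{a} \to \partial \mathbf{C}$ can be applied, forcing any limit point of $r_{a}(x)$ to lie on $\partial \mathbf{C} \cap J^{-}(x)$, where elementary Lorentzian geometry shows that the unique maximizer of $|x - \cdot|$ is the apex $0$. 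Once this uniform convergence $r_{a}(x) \to 0$ is in hand, the remainder of the argument is a direct application of the comparison estimates of Section $3$.
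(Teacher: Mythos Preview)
Your argument is correct and follows the same route as the paper: reduce to the convergence of $T_{cos}^{a}$ and then use the ratio estimates from Proposition~\ref{propmehdi3} and Corollary~\ref{cormehdi3} together with the scaling identity $aT^{a}(x)=T^{1}(ax)$ to show $\sup_{F}\lvert T^{a}-T_{cos}^{a}\rvert\to 0$. The only difference is that the paper does not reprove Step~1 at all but simply invokes \cite[Proposition~6.2]{bonsante1} for the uniform convergence $T_{cos}^{a}\to T_{cos}^{\mathbf{C}}$ on compacta of $\mathbf{C}$; your sketch of the upper bound via a locally containing cone $\mathbf{C}_{q_{a}}$ is not quite enough as written (local containment does not a priori control past causal curves leaving the neighbourhood), but the alternative you give in the ``Main obstacle'' paragraph---bounding $r_{a}(x)$ inside $\partial\Omega_{a}\cap J^{-}(x)$ and using Hausdorff convergence there---is precisely the content of Bonsante's proposition, so citing it removes the difficulty.
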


\begin{proof}
Let $F$ be a compact in $\mathbf{C}$. Note that for $a$ big enough $F\subset \Omega_{a}$. By \cite[Proposition~6.2]{bonsante1}, the cosmological time $T_{cos}^{a}$ converge uniformly on $F$ to the cosmological time of $\mathbf{C}$. So to proof that $T_{k}^{a}$ (respectively $T_{cmc}^{a}$) converge unifomly on $F$ to the cosmological time of $\mathbf{C}$, it is sufficent to proof that $\sup_{F}\left|T_{k}^{a}(x)-T^{a}_{cos}(x)\right|$ (respectively $\sup_{F}\left|T_{cmc}^{a}(x)-T^{a}_{cos}(x)\right|$) goes to $0$, when $a$ goes to $+\infty$.

$1)$ The $k$-time case. We have $$\sup_{F}\left|T_{k}^{a}(x)-T^{a}_{cos}(x)\right|\leq\left[1-\inf_{F}\frac{T^{1}_{cos}(ax)}{T_{k}^{1}(ax)}\right]\sup_{F}T_{k}^{a}(x)$$
Using Proposition \ref{propmehdi3}, one can see that $T_{k}^{a}(x)$ is bounded on $F$ and $\inf_{F}\frac{T^{1}_{cos}(ax)}{T^{1}_{k}(ax)}$ goes to $1$ when $a$ goes to $+\infty$. Thus we get that $\sup_{F}\left|T_{k}^{a}(x)-T^{a}_{cos}(x)\right|$ goes to $0$ when $a$ goes to $+\infty$. 

$2)$ The $CMC$-time case. We have $$\sup_{F}\left|T_{cmc}^{a}(x)-T^{a}_{cos}(x)\right|\leq\left[\sup_{F}\frac{T^{1}_{cos}(ax)}{T^{1}_{cmc}(ax)}-1\right]\sup_{K}T_{cmc}^{a}(x)$$
Then by  Corollary \ref{cormehdi3}, we have that $\sup_{F}\left|T_{cmc}^{a}(x)-T^{a}_{cos}(x)\right|$ goes to $0$ when $a$ goes to $+\infty$.

\end{proof}

\subsection{The de Sitter case}
Let $M\simeq B(S)/\Gamma$ be a $2+1$-dimensional $MGHC$ de Sitter space-time of hyperbolic type. Let $T_{k}$ be the $k$-time of $B(S)$. 
\begin{prop}
\label{propbelraoutidesitteralg}
We have:
\begin{itemize}
\item1) $\lim_{b\rightarrow +\infty} \frac{\inf_{S^{T_{k}}_{b}}T_{cos}}{\operatorname{argcoth}(\sqrt{\frac{b^{2}+1}{b^{2}}})}=\lim_{b\rightarrow +\infty} \frac{\sup_{S^{T_{k}}_{b}}T_{cos}}{\operatorname{argcoth}(\sqrt{\frac{b^{2}+1}{b^{2}}})}=1$;
\item2) There exists a constant $C>0$ such that $\lim_{b\rightarrow +\infty}\left[\sup_{S^{T_{k}}_{b}}T_{cos}-\inf_{S^{T_{k}}_{b}}T_{cos}\right]\leq C$.
\end{itemize}
\end{prop}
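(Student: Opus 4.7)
The plan is to follow the strategy of the proof of Proposition~\ref{propmehdi3}, adapted to the de Sitter setting, by comparing the $k$-level $S^{T_k}_b$ to cosmological levels of a reference convex domain via the Maximum Principle. Concretely, I would fix $S^{T_k}_1$ (of de Sitter Gauss curvature $-2$) as the reference surface and consider the future-complete convex region $A := J^{+}(S^{T_k}_1)$ with its cosmological time $T'_{cos}$ (the Lorentzian distance from $S^{T_k}_1$) and retraction $r'$. Since every $S^{T_k}_b$ with $b \geq 1$ lies in $A$ and $T_{cos}$ restricted to the compact quotient of $S^{T_k}_1$ is bounded, the problem reduces to estimating $T'_{cos}$ on $S^{T_k}_b$.

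The main technical ingredient to derive is the de Sitter analog of the parallel-surface Gauss curvature formula used in the flat proof. Integrating Jacobi fields along timelike unit geodesics normal to $S^{T_k}_1$ (using $R(J,\gamma')\gamma' = -J$ in ambient sectional curvature $+1$) gives, at a point $p \in S^{T'_{cos}}_{a}$, principal curvatures
\[
\tilde\lambda_i(p) = \frac{\lambda_i(r'(p))\cosh a + \sinh a}{\lambda_i(r'(p))\sinh a + \cosh a},
\]
where $\lambda_1,\lambda_2$ are those of $S^{T_k}_1$ at $r'(p)$. Coupling with the Gauss equation $k_S = 1 - \lambda_1\lambda_2$ for spacelike surfaces in $dS_3$ produces an explicit rational expression
\[
k_{S^{T'_{cos}}_a}(p) = \frac{-2}{3 \sinh^2 a + 2 H(r'(p))\sinh a \cosh a + \cosh^2 a},
\]
with $H$ the mean curvature of $S^{T_k}_1$, bounded on the compact $\Gamma$-quotient.

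Applying the Maximum Principle at a point $p \in S^{T_k}_b$ realizing $\inf_{S^{T_k}_b} T'_{cos} = a^*$ gives the inequality $k_{S^{T'_{cos}}_{a^*}}(p) \geq -(b^2+1)/b^2$; solving asymptotically (using the uniform bound on $H$) yields $a^* \geq \operatorname{argcoth}(\sqrt{(b^2+1)/b^2}) - C$ for a universal constant $C$. A symmetric application at a point realizing $\sup T'_{cos}$ produces the matching upper bound $\sup T'_{cos} \leq \operatorname{argcoth}(\sqrt{(b^2+1)/b^2}) + C$. Combining the two gives part~1 (since $\operatorname{argcoth}(\sqrt{(b^2+1)/b^2}) \to \infty$ as $b \to \infty$, both ratios tend to $1$) and part~2 (the oscillation $\sup T_{cos} - \inf T_{cos}$ is controlled by $2C$, a universal constant).

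The main obstacle is the asymptotic extraction: one must verify that the hyperbolic-trigonometric inequality produced by the Maximum Principle genuinely forces $a^{*} \sim \operatorname{argcoth}(\sqrt{(b^2+1)/b^2}) = \operatorname{argsinh}(b)$ as $b \to \infty$, uniformly in the bounded mean curvature $H$. The key algebraic identity making this possible is $\sinh(\operatorname{argcoth}(\sqrt{(b^2+1)/b^2})) = b$, which pins down the precise geometric meaning of the limiting quantity and allows the argument to close in analogy with the flat case, while accounting for the genuinely hyperbolic (rather than polynomial) dependence on $a$.
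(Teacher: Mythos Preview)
Your strategy is essentially the paper's: fix the reference surface $S^{T_k}_1$, pass to $A=J^{+}(S^{T_k}_1)$ with its own cosmological time $T'_{cos}$, compute the Gauss curvature of the parallel levels $S^{T'_{cos}}_a$ in terms of $a$ and the mean curvature $H$ of $S^{T_k}_1$, and apply the Maximum Principle at the extremal points of $T'_{cos}$ on $S^{T_k}_b$.

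Two remarks. First, a convention slip: the paper defines $k_S=-\lambda_1\lambda_2$ (so $\lambda_1\lambda_2=2$ on $S^{T_k}_1$ and $k_{S^{T_k}_b}=-(b^2+1)/b^2$), whereas your displayed Gauss equation $k_S=1-\lambda_1\lambda_2$ is the intrinsic curvature. Your explicit formula for $k_{S^{T'_{cos}}_a}$ is computed with the intrinsic convention ($\lambda_1\lambda_2=3$), but you then compare it to the extrinsic value $-(b^2+1)/b^2$; once the conventions are aligned your formula becomes equivalent (up to the sign of $H$) to the paper's expression in $\tanh a$, and the argument goes through unchanged.

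Second, for the upper bound the paper takes a shorter route than your ``symmetric application inside $A$'': it compares $S^{T_k}_b$ directly with the ambient cosmological level $S^{T_{cos}}_{\sup T_{cos}}$ of $B(S)$, using that every such level satisfies $k_{S^{T_{cos}}_a}\geq -\coth^2 a$ pointwise. This yields the sharp bound $\sup_{S^{T_k}_b}T_{cos}\leq\operatorname{argcoth}\sqrt{(b^2+1)/b^2}$ without any additive constant, and makes the final limit computation immediate. Your version via $T'_{cos}$ plus the bounded offset $T_{cos}-T'_{cos}$ also works, just with a superfluous $+C$.
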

\begin{proof}
The proof is similar to the flat case. The $k$-level $S^{T_{k}}_{1}$ is of constant Gauss curvature $-2$. Let $H_{0}=\inf H_{S^{T_{k}}_{1}}$ and $H_{1}=\sup H_{S^{T_{k}}_{1}}$, where $H_{S^{T_{k}}_{1}}$ is the $CMC$ curvature of $S^{T_{k}}_{1}$.

Denote respectively by $T'_{cos}$, $r'$ the cosmological time and retraction map of the $\Gamma$-invariant future complete convex domain $A:=J^{+}(S^{T_{k}}_{1})$ of $B(S)$. For every $b>1$, let $p\in S^{T_{k}}_{b}$ such that $\inf_{S^{T_{k}}_{b}}T'_{cos}=T'_{cos}(p):=a$.

By the Maximum Principle we have $$k_{S^{T'_{cos}}_{a}}(p)\geq -\frac{1}{b^{2}}-1.$$
But $$k_{S^{T'_{cos}}_{a}}(p)=-\frac{2-2H_{S^{T_{k}}_{1}}(r'(p))\tanh(a)+\tanh^{2}(a)}{1-2H_{S^{T_{k}}_{1}}(r'(p))\tanh(a)+2\tanh^{2}(a)}$$ Thus $$\inf_{S^{T_{k}}_{b}}T_{cos}\geq \operatorname{argth}(\frac{H_{0}}{b^{2}+2}+\frac{1}{b^{2}+2}\sqrt{H_{1}^{2}+(b^{2}-1)(b^{2}+2)})$$

On the other hand and by the Maximum Principle we have $$\sup_{S^{T_{k}}_{b}}T_{cos}\leq \operatorname{argcoth}(\sqrt{\frac{b^{2}+1}{b^{2}}})$$

Hence $$1\geq\frac{\sup_{S^{T_{k}}_{b}}T_{cos}}{\operatorname{argcoth}(\sqrt{\frac{b^{2}+1}{b^{2}}})}\geq\frac{\inf_{S^{T_{k}}_{b}}T_{cos}}{\operatorname{argcoth}(\sqrt{\frac{b^{2}+1}{b^{2}}})}\geq \frac{\operatorname{argth}(\frac{H_{0}}{b^{2}+2}+\frac{1}{b^{2}+2}\sqrt{H_{1}^{2}+(b^{2}-1)(b^{2}+2)})}{\operatorname{argcoth}(\sqrt{\frac{b^{2}+1}{b^{2}}})}$$
Then a simple computation shows that:
\begin{itemize}
\item $\lim_{b\rightarrow +\infty} \frac{\inf_{S^{T_{k}}_{b}}T_{cos}}{\operatorname{argcoth}(\sqrt{\frac{b^{2}+1}{b^{2}}})}=\lim_{b\rightarrow +\infty} \frac{\sup_{S^{T_{k}}_{b}}T_{cos}}{\operatorname{argcoth}(\sqrt{\frac{b^{2}+1}{b^{2}}})}=1$
\item $\lim_{b\rightarrow +\infty}\left[\sup_{S^{T_{k}}_{b}}T_{cos}-\inf_{S^{T_{k}}_{b}}T_{cos}\right]\leq \frac{1}{2}\log(3-H_{0})$.
\end{itemize}

\end{proof}

\section{Bilipschitz control of convex hypersurfaces}
Let us consider $M$ to be a $n+1$-dimensional
\begin{itemize}
\item future complete flat standard $MGHC$ space-time;
\item or a future complete $MGHC$ de Sitter space-time of hyperbolic type;
\item or the tight past of a $MGHC$ anti de Sitter space-time.
\end{itemize}
Our next proposition shows that the geometry of a convex spacelike surface can be compared uniformly to the cosmological one. More precisely:  
\begin{prop}
\label{propositionmehdi3}
Let $S\subset \widetilde{M}$ be a $\pi_{1}(M)$-invariant convex Cauchy hypersurface of $\widetilde{M}$. Let $\mathbf{n}$ its  Gauss application and $N$ the cosmological normal application. Then for every  $p$ in $S$ we have,
\begin{itemize}
\item $\left|\left\langle N_{p},\mathbf{n}_{p}\right\rangle\right|\leq (\sup_{S}T_{cos})(\inf_{S}T_{cos})^{-1}$ if $M$ is flat;
\item $\left|\left\langle N_{p},\mathbf{n}_{p}\right\rangle\right|\leq (\sinh(\sup_{S}T_{cos}))(\sinh(\inf_{S}T_{cos}))^{-1}$ if $M$ is locally de Sitter;
\item $\left|\left\langle N_{p},\mathbf{n}_{p}\right\rangle\right|\leq (\tan(\sup_{S}T_{cos}))(\tan(\inf_{S}T_{cos}))^{-1}$ if $M$ is locally anti de Sitter.
\end{itemize}
\end{prop}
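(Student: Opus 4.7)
The plan is to reduce the three bounds to a single infinitesimal statement about the past cosmological projection $F_b \colon S \to S^{T_{cos}}_b$ defined by $F_b(q) = \Phi_{T_{cos}}^{b - T_{cos}(q)}(q)$, where $b := \inf_S T_{cos}$ is attained thanks to $\pi_1(M)$-invariance of $S$ and cocompactness of the action on each cosmological level. At the point $p_{\min}$ realizing this infimum, $S$ is tangent from above to $S^{T_{cos}}_b$, hence $\mathbf{n}_{p_{\min}} = N_{p_{\min}}$ and the inequality holds with value $1$, consistent with the three displayed bounds.

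At a general $p \in S$, I would start by decomposing tangent vectors orthogonally with respect to the cosmological normal. Writing $\mathbf{n}_p = \cosh\theta\, N_p + \sinh\theta\, U$ for a unit spacelike $U \perp N_p$ in the plane spanned by $N_p$ and $\mathbf{n}_p$, with $\theta = \theta(p) \geq 0$, the orthogonality relation $\langle v, \mathbf{n}_p\rangle = 0$ picks out the unit spacelike tangent vector $v^\star := \sinh\theta\, N_p + \cosh\theta\, U \in T_p S$, which satisfies $dT_{cos}(v^\star) = \sinh\theta$ and realizes the maximum of $|dT_{cos}|$ on the unit sphere of $T_p S$; the quantity $|\langle N_p, \mathbf{n}_p\rangle| = \cosh\theta$ is exactly what we must bound. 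Using the explicit formula $\Phi^s_{T_{cos}}(q) = r(q) + (T_{cos}(q)+s)N_q$ in the flat case and its Wick-rotated analogues in the de~Sitter and anti de~Sitter settings, together with the umbilicity of cosmological levels in all three geometries, a short computation will give
\[
|dF_b(v^\star)| \;=\; \frac{\sigma(b)}{\sigma(T_{cos}(p))}\,\cosh\theta,
\]
with $\sigma(t) = t,\ \sinh(t),\ \tan(t)$ in the flat, de~Sitter, and anti de~Sitter cases respectively.

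The three inequalities would then follow at once from the infinitesimal contraction $|dF_b(v^\star)| \leq |v^\star| = 1$, since by monotonicity of $\sigma$ on the relevant ranges one deduces $\cosh\theta \leq \sigma(T_{cos}(p))/\sigma(b) \leq \sigma(\sup_S T_{cos})/\sigma(\inf_S T_{cos})$. Establishing this contraction is the heart of the argument and is where the convexity of $S$ must be used in an essential way. The approach I would pursue is to apply Proposition~\ref{propositionmehdi1} in a ``dual'' direction: a short spacelike arc $\beta$ on $S^{T_{cos}}_b$ through $F_b(p)$ lifts forward along cosmological flow lines to a curve $\alpha \subset S$ (well-defined since $S$ is Cauchy, so every cosmological worldline meets $S$ exactly once); applying Proposition~\ref{propositionmehdi1} to $\alpha$ with target level $S^{T_{cos}}_a$, where the forward projection of $\alpha$ coincides with the $\sigma(a)/\sigma(b)$-dilation of $\beta$, produces a length comparison that translates, in the $v^\star$-direction at $p$, into the contraction above. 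I expect the main technical obstacle to be verifying that this lifting stays spacelike at every step and adapting the argument cleanly across the three geometries, restricting to the tight past region in the anti de Sitter case, where by \cite{BBZ1} the cosmological time is regular and the cosmological levels are convex.
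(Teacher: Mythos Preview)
Your argument has a genuine gap at the ``short computation'' step. The displayed identity
\[
|dF_b(v^\star)| \;=\; \frac{\sigma(b)}{\sigma(T_{cos}(p))}\,\cosh\theta
\]
relies on the cosmological levels being umbilic, and this is false outside the static (cone) case. In a general flat regular domain one has, along a curve with tangent $v^\star$, the decomposition $\dot r + T_{cos}(p)\,\dot N = \cosh\theta\,U$, whence $dF_b(v^\star)=\dot r + b\,\dot N=\cosh\theta\,U-(T_{cos}(p)-b)\,\dot N$. The vectors $\dot r$ and $\dot N$ are \emph{not} proportional in general (Bonsante's $g_b\le g_a\le (a/b)^2 g_b$ is a genuine two-sided pinching, not an equality), so the norm of $dF_b(v^\star)$ depends on $\langle U,\dot N\rangle$ and $|\dot N|$, i.e.\ on the shape operator of $S^{T_{cos}}_{T_{cos}(p)}$ in the direction $U$. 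Your identity holds precisely when $\dot r=0$, which is the cone case.

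Once the equality fails, the rest of the scheme collapses: you need both the formula \emph{and} the contraction $|dF_b(v^\star)|\le 1$ to extract $\cosh\theta\le \sigma(T_{cos}(p))/\sigma(b)$. Your proposed route to the contraction via Proposition~\ref{propositionmehdi1} does not close either: that proposition compares a curve with its \emph{forward} projection along the gradient flow of a quasi-concave time, so applying it along the cosmological flow gives $\ell(\alpha)\le \ell(F_a(\alpha))$, and combining with Bonsante's pinching yields only $\ell(\alpha)\le (a/b)\,\ell(\beta)$, not $\ell(\beta)\le\ell(\alpha)$. To get $1$-Lipschitzness of $F_b$ on $S$ you would need to embed $S$ as a level of some quasi-concave time and project along \emph{that} flow, but then the projection no longer coincides with $F_b$. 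In fact the paper establishes the relevant Lipschitz comparison (Proposition~\ref{prop.mehdi456677}) \emph{using} Proposition~\ref{propositionmehdi3}, so reversing the implication would be circular.

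The paper's proof proceeds by an entirely different, global-causal argument. Convexity of $S$ is used once, to say that the tangent hyperplane $P_p=\mathbf{n}_p^{\perp}$ is a support hyperplane, so $S^{T_{cos}}_a\subset J^{+}(S)\subset J^{+}(P_p)$. One then works with the model umbilic hypersurface $S_a=\{q\in J^{+}(r(p)):d_{Lor}(q,r(p))=a\}$, which lies in the future of $S^{T_{cos}}_a$ and hence in $J^{+}(P_p)$; choosing on $S_a$ a point $y$ whose position vector relative to $r(p)$ is adapted to $\mathbf{n}_p$ turns the support inequality $\langle p-y,\mathbf{n}_p\rangle\ge 0$ directly into the desired bound on $|\langle N_p,\mathbf{n}_p\rangle|$ (this is Lemma~\ref{lemmamehdi7}). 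No infinitesimal length comparison is needed.
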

For the proof we need the following lemma:
\begin{lemm}
\label{lemmamehdi7}
Let $S^{T_{cos}}_{a}$ et $S^{T_{cos}}_{b}$ be two cosmological levels of $\widetilde{M}$ with $b<a$. Then for every  $p$ in  $S^{T_{cos}}_{b}$ and every unitary future oriented timelike tangent vector $x\in T_{p}\widetilde{M}$ such that $S^{T_{cos}}_{a}\subset J^{+}(P_{p})$, where $P_{p}=x^{\bot}\subset T_{p}\widetilde{M}$, we have: 
\begin{itemize}
\item $\left|\left\langle N_{p}, x\right\rangle\right|\leq (a)(b)^{-1}$ if $M$ is flat;
\item $\left|\left\langle N_{p}, x\right\rangle\right|\leq (\sinh(a))(\sinh(b))^{-1}$ if $M$ is locally de Sitter;
\item $\left|\left\langle N_{p}, x\right\rangle\right|\leq (\tan(a))(\tan(b))^{-1}$ if $M$ is locally anti de Sitter.
\end{itemize}
\end{lemm}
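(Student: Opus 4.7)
The plan is to construct, in each geometry, a single auxiliary point $p^{*}$ lying in the future of the initial singularity at cosmological distance at least $a$, and then to read off the required inner-product inequality from the single fact that $p^{*}\in J^{+}(P_{p})$. The argument has the same three steps in each case: construct $p^{*}$, verify $p^{*}\in J^{+}(S^{T_{cos}}_{a})$, and unwind the resulting causal inequality.

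In the flat case I take $p^{*}:=r(p)+ax$. Since $ax$ is future timelike and $r(p)\in\partial\Omega$, the point $p^{*}$ lies in $I^{+}(r(p))$; future completeness of the regular domain (which entails $I^{+}(\partial\Omega)\subset\Omega$ by a standard limit argument using any sequence in $\Omega$ converging to $r(p)$ together with the openness of the future cone) then gives $p^{*}\in\Omega$. The affine segment $[r(p),p^{*}]$ is a past timelike curve from $p^{*}$ of Lorentzian length $a$ terminating on the initial singularity, so $T_{cos}(p^{*})\geq a$, i.e.\ $p^{*}\in J^{+}(S^{T_{cos}}_{a})$. Combining with the hypothesis $S^{T_{cos}}_{a}\subset J^{+}(P_{p})$ via transitivity of $J^{+}$ yields $p^{*}\in J^{+}(P_{p})$, which in flat Minkowski is exactly $\langle p^{*}-p,x\rangle\leq 0$. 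Plugging in $p=r(p)+bN_{p}$ and $p^{*}=r(p)+ax$, this becomes $\langle ax-bN_{p},x\rangle = -a - b\langle N_{p},x\rangle \leq 0$, which is the desired inequality $|\langle N_{p},x\rangle|\leq a/b$.

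For the locally de Sitter and anti de Sitter cases I would run the same scheme, replacing the affine segment by a timelike geodesic of length $a$ in the ambient embedding of $\mathbb{D}\mathbb{S}_{n+1}$ in $\mathbb{R}^{1,n+1}$ (resp.\ $\mathbb{A}\mathbb{D}\mathbb{S}_{n+1}$ in $\mathbb{R}^{2,n}$). Using the standard ambient descriptions $p=\cosh(b)r(p)+\sinh(b)u$ and $N_{p}=\sinh(b)r(p)+\cosh(b)u$ (resp.\ with $\cos$ and $\sin$), where $u\in T_{r(p)}\widetilde{M}$ is the initial velocity at $r(p)$ of the normal geodesic from $r(p)$ to $p$, I would set $p^{*}$ to be the endpoint of the future timelike geodesic of length $a$ from $r(p)$ in a direction $u'\in T_{r(p)}\widetilde{M}$ matching $x$ (for instance, the normalized ambient orthogonal projection of $x$ onto $T_{r(p)}\widetilde{M}$). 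Transitivity forces $p^{*}\in J^{+}(P_{p})$ as before, and expanding the resulting ambient inner-product inequality via direct hyperbolic or trigonometric identities yields the stated bounds $\sinh(a)/\sinh(b)$ and $\tan(a)/\tan(b)$ respectively.

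The main obstacle is the verification that $p^{*}\in\widetilde{M}$ in each setting. This reduces to the future-completeness property $I^{+}(\partial\widetilde{M})\subset\widetilde{M}$ of the initial singularity, standard for flat regular domains and provided in the hyperbolic-type de Sitter and tight past anti de Sitter cases by the description of $\partial\widetilde{M}$ via the retraction map. A secondary point, confined to the curved cases, is the precise choice of the direction $u'$ so that the concluding inner-product computation simplifies cleanly to the claimed hyperbolic or trigonometric ratio.
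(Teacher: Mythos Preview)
Your approach is essentially the same as the paper's: the paper's Remark~\ref{remarque1} is exactly your flat argument with $p^{*}=r(p)+ax$, and in the curved cases the paper likewise works on the auxiliary hypersurface $S_{a}=\{y\in J^{+}(r(p)):d_{\mathrm{Lor}}(y,r(p))=a\}$ and picks a single point there. Your proposed direction $u'$ --- the normalized ambient orthogonal projection of $x$ onto $T_{r(p)}\widetilde{M}$ --- is precisely the choice the paper makes (after a short decomposition $r(p)=-\langle r(p),x\rangle x+u'$ one checks this projection equals the paper's $v_{y}$), so your sketch in the de Sitter and anti de Sitter cases is on the right track; you should, however, actually carry out the short ambient computation rather than leave it as an assertion.
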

\textbf{Proof of Lemma \ref{lemmamehdi7} in the flat case}. Fix an origin of $\mathbb{R}^{1,n}$
and let $M\simeq \Omega/\Gamma_{\tau}$ be a flat standard future complete space-time. Let $p$ in $S^{T_{cos}}_{b}$ and let $x\in \mathbb{H}^{n}$ such that $S^{T_{cos}}_{a}\subset J^{+}(p+x^{\bot})$. For every $y$ in $S^{T_{cos}}_{a}$ we have: $$\left\langle y,x\right\rangle\leq \left\langle p,x\right\rangle$$ Then $$a\left\langle N_{y},x\right\rangle\leq b\left\langle N_{p},x\right\rangle+\left\langle r(p)-r(y),x\right\rangle.$$
The normal application $N:\Sigma^{T_{cos}}_{a}\rightarrow \mathbb{H}^{n}$ is surjective. So to conclude it is sufficient to take $y$ in $S^{T_{cos}}_{a}$ such that $N_{y}=x$. 

\begin{rmq}
\label{remarque1}
We restrict ourselves to standard space-times to get the surjectivity of the normal cosmological application. However, it still true in any future regular domain. Indeed, consider two cosmological levels  $S^{T_{cos}}_{a}$ and $S^{T_{cos}}_{b}$ with $b<a$. Let $p$ in $S^{T_{cos}}_{b}$ and let $S_{a}$ be the hyperboloid defined by:
$S_{a}=\left\{y\in J^{+}(r(p))\subset\mathbb{R}^{1,n}\mbox{~such that~} \left|y-r(p)\right|^{2}=-a^{2}\right\}.$
Remark that $S_{a}$ is in the future of $S^{T_{cos}}_{a}$. Thus for every  $y$ in $S_{a}$we have : $\left\langle y,x\right\rangle\leq \left\langle p,x\right\rangle$ and so $\left\langle y-r(p),x\right\rangle\leq b\left\langle N_{p},x\right\rangle$.
Then it is sufficient to take $y$ such that $y-r(p)=a x$
\end{rmq}\fin

\textbf{Proof of Lemma \ref{lemmamehdi7} in the de Sitter case}. Fix an origin of $\mathbb{R}^{1,n+1}$. Let $M\simeq B(S)/\Gamma$ be a $MGHC$ de Sitter space-time of hyperbolic type. Let $p$ in $S^{T_{cos}}_{b}$ and let $x$ be a unitary future oriented timelike tangent vector in $T_{p}\widetilde{M}$ such that $S^{T_{cos}}_{a}\subset J^{+}(P_{p})$, where $P_{p}=x^{\bot}\subset T_{p}B(S)$. The proof is similar to the one of  Remark \ref{remarque1} which depends only on $J^{+}(r(p))$.  Note that $J^{+}(r(p))$ is isometric to a domain of $\mathbb{D}\mathbb{S}_{n+1}$. So we can, without losing generality, restric ourselves and work in $dS_{n+1}$. 

For every $y$ in the hypersurface $S_{a}=\left\{y\in J^{+}(r(p))\subset dS_{n+1} \mbox{~such that~} d_{Lor}(y,r(p))=a\right\}$. We have,  $$\left\langle x, p-y\right\rangle\geq 0,$$ where $\left\langle . , .\right\rangle$ is the scalar product of $\mathbb{R}^{1,n+1}$. Thus $$0=\left\langle x,p\right\rangle\geq \left\langle x,y\right\rangle$$
Let us write:
\begin{itemize}
\item $r(p)=-\left\langle r(p), x \right\rangle x+ u'$, where $u'\in x^{\bot}$;
\item $p=\cosh(b) r(p)+\sinh(b) N_{r(p)}$, where $N_{r(p)}\in \mathbb{H}^{n+1}\cap T_{r(p)}dS_{n+1}$ is the cosmological normal vector;
\item $y=\cosh(a) r(p)+\sinh(a) v_{y}$, where $v_{y}\in \mathbb{H}^{n+1}\cap T_{r(p)}dS_{n+1}$.
\end{itemize}

Now take $v_{y}=(\sqrt{1+\left\langle r(p), x \right\rangle^{2}}) x-\frac{\left\langle r(p),x\right\rangle}{\left|u'\right|}u'$. 

On the one hand $\left\langle x,p\right\rangle=0$ so, $$\left\langle x,N_{p}\right\rangle=-\frac{1}{\sinh(b)}\left\langle x, r(p)\right\rangle$$
On the other hand $\left\langle x,y\right\rangle\leq0$ and hence, $$\left\langle x,r(p)\right\rangle\leq \sinh(a)$$
Thus $$\left|\left\langle x,N_{p}\right\rangle\right|\leq\frac{\sinh(a)}{\sinh(b)}$$\\

\textbf{Proof of  Lemma \ref{lemmamehdi7} in the anti de Sitter case}. Fix an origin of $\mathbb{R}^{2,n}$.
Let $M$ be the tight past of a $MGHC$ anti de Sitter space-time. Note that $\widetilde{M}$ is isometric to a domain of $\mathbb{A}\mathbb{D}\mathbb{S}_{n+1}$. Let $p$ in $S^{T_{cos}}_{b}$ and let $x\in AdS_{n+1}\subset\mathbb{R}^{2,n}$ such that $P_{p}=x^{\bot}\subset T_{p}\widetilde{M}$, $x$ is future oriented (with respect to the orientation of  $ADS_{n+1}$) and $S^{T_{cos}}_{a}\subset J^{+}(P_{p})$. Let $S_{a}=\left\{y\in J^{+}(r(p))\subset AdS_{n+1} \mbox{~such that~} d_{Lor}(y,r(p))=a\right\}$. For every $y$ in $S_{a}$ we have, $$\left\langle x, p-y\right\rangle\geq 0,$$ where $\left\langle . , .\right\rangle$ is the scalar product of $\mathbb{R}^{2,n}$. Thus $$0=\left\langle x,p\right\rangle\geq \left\langle x,y\right\rangle$$
Let us write:
\begin{itemize}
\item $r(p)=-\left\langle r(p), x \right\rangle x-\left\langle p, r(p) \right\rangle p+ u'$, where $u'\in Vect(x,p)^{\bot}$;
\item $p=\cos(b) r(p)+\sin(b) N_{r(p)}$, where $N_{r(p)}\in AdS_{n+1}\cap T_{r(p)}AdS_{n+1}$ is the cosmological normal vector;
\item $y=\cos(a) r(p)+\sin(a) v_{y}$, where $v_{y}\in AdS_{n+1}\cap T_{r(p)}AdS_{n+1}$ is future oriented.
\end{itemize}
We get then:
\begin{itemize}
\item $\left\langle x, N_{p}\right\rangle=-\frac{1}{\sin(b)}\left\langle x, r(p)\right\rangle$;
\item $\left\langle x, r(p)\right\rangle\leq -\tan(a)\left\langle x,v_{y}\right\rangle$;
\end{itemize}

For every $\beta\in \mathbb{R}$ let,  $$v(\beta)=(-\left\langle p,r(p)  \right\rangle) x+\beta p+\frac{\sqrt{\left\langle p,r(p)  \right\rangle^{2}+\beta^{2}-1}}{\left|u'\right|}u'.$$
Note that there exists  $\beta$ such that: $\left|v(\beta)\right|^{2}=-1$ and $\left\langle v(\beta),r(q)\right\rangle=0$. In this case $v(\beta)$ is future oriented. Indeed, if not then $v_{y}=-v(\beta)$ is future oriented and hence $y=\cos(a) r(p)+\sin(a) v_{y}$ belongs to $S_{a}$. But $\left\langle r(p), x \right\rangle\leq \tan(a)\left\langle v(\beta), x \right\rangle=-\cos(b)\tan(a)\leq 0$ and $\left\langle r(p), x \right\rangle=-\sin(b)\left\langle N_{p}, x \right\rangle\geq 0$  which is a contradiction.
Thus $y=\cos(a) r(p)+\sin(a) v_{\beta}$ belongs to $S_{a}$ and hence $$\left|\left\langle N_{p}, x\right\rangle\right|\leq (\tan(a))(\tan(b))^{-1}$$.

\textbf{Proof of Proposition \ref{propositionmehdi3}}
Denote by $a=\sup_{S}T_{cos}$ and $b=\inf_{S}T_{cos}$. The hypersurface $S$ is in the past of $S^{T_{cos}}_{a}$ and in the future of $S^{T_{cos}}_{b}$. Let $p$ in $S$ and let $P_{p}=\mathbf{n}_{p}^{\bot}$ the tangent hyperplane to $S$ at $p$. As $S$ is convex, we have that $S^{T_{cos}}_{a}\subset J^{+}(P_{p})$. 
By Lemma \ref{lemmamehdi7} we have:
\begin{itemize} 
\item $\left|\left\langle N_{p},\mathbf{n}_{p}\right\rangle\right|\leq \frac{a}{T_{cos}(p)}\leq \frac{a}{b}$ in the flat case;
\item $\left|\left\langle N_{p},\mathbf{n}_{p}\right\rangle\right|\leq \frac{\sinh(a)}{\sinh(T_{cos}(p))}\leq \frac{\sinh(a)}{\sinh(b)}$ in the de Sitter case;
\item $\left|\left\langle N_{p},\mathbf{n}_{p}\right\rangle\right|\leq \frac{\sin(a)}{\sin(T_{cos}(p))}\leq \frac{\tan(a)}{\tan(b)}$ in the anti de Sitter case.
\end{itemize}
and this concludes the proof.\fin
\subsection{The (n+1)-flat case}
Let $M\simeq\Omega/\Gamma$ be a future complete $MGHC$ flat non elementary  space-time of dimension $n+1$.
\begin{prop}
\label{prop.mehdi456677}
Let $S\subset \Omega$ be a convex $\Gamma$ invariant Cauchy  hypersurface and let $g_{S}$ be the Riemannian metric defined on $S$ by the restriction of the ambient Lorentzian metric of  the Minkowski space $\mathbb{R}^{1,n}$. Then  $(S,g_{S})$ is $K^{4}$-bi-Lipschitz to $(S^{T_{cos}}_{\sup_{S}T_{cos}}, g^{T_{cos}}_{\sup_{S}T_{cos}})$, where $K=\frac{\sup_{S}T_{cos}}{\inf_{S}T_{cos}}$.
\end{prop}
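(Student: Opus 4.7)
The plan is to analyze the projection $\pi \colon S \to S^{T_{cos}}_a$ along cosmological flow lines, where $a := \sup_S T_{cos}$ and $b := \inf_S T_{cos}$. Since $S$ is a convex Cauchy hypersurface squeezed between $S^{T_{cos}}_b$ and $S^{T_{cos}}_a$, each integral line of $\xi_{T_{cos}}$ meets $S$ exactly once, and using the canonical decomposition $p = r(p) + T_{cos}(p) N_p$ in a flat regular domain, $\pi$ is the diffeomorphism $\pi(p) = r(p) + a N_p$. I will actually prove the sharper estimate that $\pi$ is $K^2$-bi-Lipschitz, which is stronger than the claimed $K^4$ bound. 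The inequality $|v|_{g_S} \leq |d\pi_p(v)|_{g^{T_{cos}}_a}$ is the easy direction: since $S \subset J^-(S^{T_{cos}}_a)$ and $T_{cos}$ is concave in the flat case, the $C^{1,1}$-analog of Proposition~\ref{propositionmehdi1} (see the remark following that proposition) implies that $\pi$ does not decrease lengths of spacelike curves, giving this bound infinitesimally.

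For the upper bound $|d\pi_p(v)| \leq K^2 |v|_{g_S}$ I would compute $d\pi$ explicitly. Differentiating $p = r(p) + t N_p$ with $t = T_{cos}(p)$ yields $v = dr_p(v) + dT_{cos}(v) N_p + t\, dN_p(v)$. Since $N$ is $\mathbb{H}^n$-valued one has $dN_p(v) \perp N_p$; testing the identity against $N_p$ and using $\nabla T_{cos} = -N_p$ together with $\langle N_p, N_p\rangle = -1$ forces $dr_p(v) \perp N_p$ as well, and in particular $dN_p(N_p) = dr_p(N_p) = 0$. Writing $v = v_\parallel + \mu N_p$ with $v_\parallel \in N_p^\perp$, and differentiating $\pi(p) = r(p) + a N_p$, I obtain
\[ d\pi_p(v) = dr_p(v) + a\, dN_p(v) = v_\parallel + (a - t)\, B(v_\parallel), \]
where $B := dN_p|_{N_p^\perp}$ is the shape operator of the cosmological level $S^{T_{cos}}_t$ at $p$.

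Two ingredients then close out the estimate. First, Proposition~\ref{propositionmehdi3} gives $\alpha := |\langle N_p, \mathbf{n}_p\rangle| \leq K$; writing $\mathbf{n}_p = \alpha N_p + \mathbf{n}_\parallel$ (so $|\mathbf{n}_\parallel|^2 = \alpha^2 - 1$), the tangency condition $\langle v, \mathbf{n}_p\rangle = 0$ forces $\mu = \langle v_\parallel, \mathbf{n}_\parallel\rangle / \alpha$, and a single Cauchy--Schwarz yields $|v|^2_{g_S} = |v_\parallel|^2 - \mu^2 \geq |v_\parallel|^2 / \alpha^2$, i.e.\ $|v_\parallel| \leq K |v|_{g_S}$. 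Second, by the convex-geometric analysis of cosmological levels in a regular domain (\cite{bonsante1}), the eigenvalues of $B$ lie in $[0, 1/t]$, so those of $I + (a - t)B$ lie in $[1, a/t] \subset [1, K]$; since $B$ is self-adjoint on $N_p^\perp$ this gives $|d\pi_p(v)| \leq K |v_\parallel| \leq K^2 |v|_{g_S}$. The main obstacle is invoking this shape-operator eigenvalue bound, which rests on the convex geometry of cosmological levels in a flat regular domain; a secondary subtlety is the reduced $C^{1,1}$ regularity of $T_{cos}$, which is handled through the remark cited above.
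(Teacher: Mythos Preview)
Your argument is correct and follows essentially the same route as the paper: both proofs push $S$ onto $S^{T_{cos}}_a$ along cosmological gradient lines, decompose a tangent vector $v=\dot\alpha$ as $\dot r + \dot T_{cos}\,N + T_{cos}\,\dot N$, bound the ``time'' component via Proposition~\ref{propositionmehdi3} (your Cauchy--Schwarz step $|v_\parallel|\le K|v|_{g_S}$ is exactly the paper's $\dot T_{cos}^2\le(K^2-1)|\dot\alpha|^2$), and control the remaining spatial part via Bonsante's comparison of cosmological levels (Proposition~\ref{bonsanteporputilis�preuve}). Your shape-operator formulation $d\pi_p(v)=(I+(a-t)B)v_\parallel$ with eigenvalues of $I+(a-t)B$ in $[1,a/t]$ is precisely the infinitesimal content of $g_t\le g_a\le (a/t)^2 g_t$, so the two arguments are the same computation in slightly different packaging.

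One clarification: your ``sharper'' $K^2$ estimate is not actually an improvement. In this paper ``$C$-bi-Lipschitz'' is used at the level of metric tensors (see the phrasing of Proposition~\ref{bonsanteporputilis�preuve}: ``$(\tfrac{a}{b})^2$-bi-Lipschitz'' means $g_b\le g_a\le(\tfrac{a}{b})^2 g_b$). Your bound $|v|_{g_S}\le|d\pi_p(v)|\le K^2|v|_{g_S}$ is exactly the paper's $(\tfrac{b}{a})^4|\dot r+a\dot N|^2\le|\dot\alpha|^2\le|\dot r+a\dot N|^2$, i.e.\ $K^4$ on metrics, $K^2$ on distances---the same conclusion, not a stronger one.
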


\begin{rmq}
The fact that $(S,g_{S})$ is bi-Lipschitz to $(\Sigma^{T_{cos}}_{\sup_{S}T_{cos}}, g^{T_{cos}}_{\sup_{S}T_{cos}})$ is a direct consequence of the cocompactness of the $\Gamma$-action. What we are proving here is that the bi-Lipschitz constant  $K$ depend only on the cosmological barrier and not on the hypersurface $S$.
\end{rmq}

Let us start with the following proposition due to Bonsante:
\begin{prop}{\em (\cite[Lemme~7.4]{bonsante1})}.
\label{bonsanteporputilisépreuve}
The cosmological levels $S^{T_{cos}}_{a}$ and $S^{T_{cos}}_{b}$ are $(\frac{a}{b})^{2}$-bi-Lipschitz one to the other. More precisely, $$g_{b}\leq g_{a}\leq (\frac{a}{b})^{2} g_{b}$$

\end{prop}

\textbf{Proof of Proposition \ref{prop.mehdi456677}}.
Let $S$ be a  convex $\Gamma$ invariant Cauchy hypersurface of $\Omega$ and let $g_{S}$ its induced Riemannian metric. Denote by $a=\sup_{S}T_{cos}$ and by $b=\inf_{S}T_{cos}$. 


Let $\alpha:[0,1]\rightarrow S$ be a Lipschitz curve in $S$. For almost every $s$ in $[0,1]$, we have $$\dot{\alpha}(s)=\dot{r}(s)+\dot{T}_{cos}(s)N_{s}+T_{cos}(s)\dot{N}(s)$$ And hence $$\left|\dot{\alpha}(s)\right|^{2}=\left|\dot{r}(s)+T_{cos}(s)\dot{N}(s)\right|^{2}-\dot{T}_{cos}(s)^{2}$$
Thus by Proposition \ref{bonsanteporputilisépreuve} $$\left|\dot{\alpha}(s)\right|^{2}\geq \left|\dot{r}(s)+b\dot{N}(s)\right|^{2}-\dot{T}_{cos}(s)^{2}.$$

Note that $$\dot{T}_{cos}(s)=d_{\alpha(s)}T_{cos}.\dot{\alpha}(s)=\left\langle N(\alpha(s)),\dot{\alpha}(s)\right\rangle.$$
Let us write $N(\alpha(s))=h(s)\mathbf{n}_{\alpha(s)}+v(s)$, where $\mathbf{n}$ is the normal map of $S$ and $v(s)$ is in $\mathbf{n}(\alpha(s))^{\bot}$. By Proposition \ref{propositionmehdi3}: $\left|\left\langle N_{\alpha(s)},\mathbf{n}(\alpha(s))\right\rangle\right|\leq  \frac{a}{b}$ and hence $\left|v(s)\right|^{2}\leq (\frac{a}{b})^{2}-1$.

But $$\left|\dot{T}_{cos}(s)\right|=\left|\left\langle v(s),\dot{\alpha}(s)\right\rangle\right|\leq \left|v(s)\right|\left|\dot{\alpha}(s)\right|$$ Thus $$\dot{T}_{cos}(s)^{2}\leq ((\frac{a}{b})^{2}-1)\left|\dot{\alpha}(s)\right|^{2}.$$

Which proves that $$(\frac{b}{a})^{2}\left|\dot{r}(s)+b\dot{N}(s)\right|^{2}\leq\left|\dot{\alpha}(s)\right|^{2}$$ 

On the other hand and by Proposition \ref{bonsanteporputilisépreuve} we have $$\left|\dot{\alpha}(s)\right|^{2}\leq  \left|\dot{r}(s)+a\dot{N}(s)\right|^{2}\leq(\frac{a}{b})^{2}\left|\dot{r}(s)+b\dot{N}(s)\right|^{2}$$ 
Thus $$(\frac{b}{a})^{4}\left|\dot{r}(s)+a\dot{N}(s)\right|^{2}\leq\left|\dot{\alpha}(s)\right|^{2}\leq \left|\dot{r}(s)+a\dot{N}(s)\right|^{2}$$
This proves that the cosmological flow induces a $(\frac{b}{a})^{4}$-bi-Lipschitz identification between $(S,g_{S})$ and $(S^{T_{cos}}_{a}, g^{T_{cos}}_{a})$.
\fin

\begin{cor}
Let $M$ be a $MGHC$ flat future complete non elementary space-time. Let $T_{cmc}:\widetilde{M}\rightarrow \mathbb{R}_{+}$ its associated Cauchy time. Then for every $a>0$, the hypersurface $(S^{T_{cmc}}_{a}, g^{T_{cmc}}_{a})$ is $n^{4}$-bi-Lipschitz to the hypersurface $(S^{T_{cos}}_{a},g^{T_{cos}}_{a})$.
\end{cor}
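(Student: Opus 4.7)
The plan is to apply Proposition \ref{prop.mehdi456677} directly to the CMC level $S = S^{T_{cmc}}_a$, using the uniform CMC cosmological barrier estimate recalled in Remark \ref{remarmehdi3}. The key point is that Proposition \ref{prop.mehdi456677} gives a bi-Lipschitz constant that depends only on the ratio $K_S := \sup_S T_{cos}/\inf_S T_{cos}$ between the cosmological maximum and minimum on $S$, not on any further geometric feature of $S$. So one just needs to plug in the CMC-specific bound for this ratio.

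First, I would observe that by Andersson--Barbot--B\'eguin--Zeghib (Remark \ref{remarmehdi3}), for the CMC level $S^{T_{cmc}}_a$ one has
\[
\frac{\sup_{S^{T_{cmc}}_a} T_{cos}}{\inf_{S^{T_{cmc}}_a} T_{cos}} \leq n,
\]
regardless of the parameter $a$. Second, applying Proposition \ref{prop.mehdi456677} to $S = S^{T_{cmc}}_a$ yields that the cosmological flow realizes a $K_S^{4}$-bi-Lipschitz identification between $(S^{T_{cmc}}_a, g^{T_{cmc}}_a)$ and the cosmological level $(S^{T_{cos}}_{\sup_{S^{T_{cmc}}_a} T_{cos}}, g^{T_{cos}}_{\sup_{S^{T_{cmc}}_a} T_{cos}})$, and the estimate above gives $K_S^4 \leq n^4$.

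Finally, to land on the cosmological level at parameter exactly $a$ instead of $\sup_{S^{T_{cmc}}_a} T_{cos}$, I would use Bonsante's estimate (Proposition \ref{bonsanteporputilis�preuve}) together with the sharper CMC comparison $\tfrac{1}{n}\sup_{S^{T_{cmc}}_a} T_{cos} \leq a \leq \sup_{S^{T_{cmc}}_a} T_{cos}$ from Remark \ref{remarmehdi3}, composing the cosmological-flow identification between these two cosmological levels with the one built in the first two steps. The main (only) subtlety lies in bookkeeping the exponents: the first step naturally produces $K^4$, and one must check that no loss occurs beyond the announced $n^4$ factor when passing to the cosmological slice at time $a$, which follows because the two relevant cosmological levels are canonically identified by the cosmological flow and the target metric $g^{T_{cos}}_a$ is itself controlled by the ambient inequality $g^{T_{cos}}_b \leq g^{T_{cos}}_a \leq (a/b)^2 g^{T_{cos}}_b$ of Proposition \ref{bonsanteporputilis�preuve}. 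Once the exponents are tracked, the corollary is immediate, with no new geometric input beyond Proposition \ref{prop.mehdi456677} and the universal CMC barrier.
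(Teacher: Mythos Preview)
Your approach is exactly the paper's: its proof is the single sentence ``the corollary follows from Remark~\ref{remarmehdi3} and Proposition~\ref{prop.mehdi456677}.'' The first two steps you describe are precisely that argument.

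Your third step is extra care the paper does not spell out. One comment there: the claim that composing with Proposition~\ref{bonsanteporputilis�preuve} keeps the constant at $n^{4}$ is not obtained by black-box composition. With $a_{-}=\inf_{S^{T_{cmc}}_{a}}T_{cos}$ and $a_{+}=\sup_{S^{T_{cmc}}_{a}}T_{cos}$, Proposition~\ref{prop.mehdi456677} gives $(a_{-}/a_{+})^{4}\,g^{T_{cos}}_{a_{+}}\le g^{T_{cmc}}_{a}\le g^{T_{cos}}_{a_{+}}$, and Proposition~\ref{bonsanteporputilis�preuve} gives $g^{T_{cos}}_{a}\le g^{T_{cos}}_{a_{+}}\le (a_{+}/a)^{2}g^{T_{cos}}_{a}$; chaining these yields a ratio $(a_{+}/a_{-})^{4}(a_{+}/a)^{2}$, which a priori is only $\le n^{6}$. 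To land on $n^{4}$ you should instead rerun the proof of Proposition~\ref{prop.mehdi456677} projecting directly onto $S^{T_{cos}}_{a}$: Remark~\ref{remarmehdi3} gives $a_{-}\le a\le a_{+}$ (since $a\ge a_{+}/n\ge a_{-}$), and then the same computation yields
\[
\Bigl(\frac{a_{-}}{a_{+}}\Bigr)^{2}\Bigl(\frac{a_{-}}{a}\Bigr)^{2}\lvert\dot r+a\dot N\rvert^{2}\ \le\ \lvert\dot\alpha\rvert^{2}\ \le\ \Bigl(\frac{a_{+}}{a}\Bigr)^{2}\lvert\dot r+a\dot N\rvert^{2},
\]
whose ratio is $(a_{+}/a)^{2}(a/a_{-})^{2}(a_{+}/a_{-})^{2}=(a_{+}/a_{-})^{4}\le n^{4}$.
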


\begin{proof}
The corollary follows from Remark \ref{remarmehdi3} and Proposition \ref{prop.mehdi456677}.
\end{proof}
\subsection{The (2+1)-de Sitter case}
Suppose now that $M\simeq B(S)/\Gamma$ is a $MGHC$ de Sitter space-time of hyperbolic type and of dimension $2+1$. Let $(\Omega_{1},\mathfrak{g})$ be the hyperbolic $dS$-standard space-time of dimension $2+1$ associated to $M$ obtained by a Wick rotation from a flat regular domain $(\Omega_{1},g)$. Let $T_{cos}$ and $\mathcal{T}_{cos}$ be respectively the cosmological time of $(\Omega,g)$ and $(\Omega_{1},\mathfrak{g})$.
\begin{prop}
\label{promehdi49}
The cosmological levels $S^{\mathcal{T}_{cos}}_{a}$ and $S^{\mathcal{T}_{cos}}_{b}$ of $B(S)$ are $(\frac{\sinh(a)}{\sinh(b)})^{2}$-bi-Lipschitz one to the other. More precisely,
$$\mathfrak{g}^{\mathcal{T}_{cos}}_{b}\leq \mathfrak{g}^{\mathcal{T}_{cos}}_{a}\leq (\frac{\sinh(a)}{\sinh(b)})^{2}\mathfrak{g}^{\mathcal{T}_{cos}}_{b}$$
\end{prop}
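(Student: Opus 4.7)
The plan is to deduce the statement from the analogous flat inequality (the Bonsante Proposition immediately preceding this one) by transporting it through the Wick rotation relating the flat regular domain $(\Omega_{1},g)$ and the $dS$-standard space-time $(\Omega_{1},\mathfrak{g})$.

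First I would use the identity $\mathcal{T}_{cos}=\operatorname{argth}(T_{cos})$ (from the quoted Benedetti--Bonsante proposition) to identify level sets: as subsets of $\Omega_{1}$, one has $S^{\mathcal{T}_{cos}}_{c}=S^{T_{cos}}_{\tanh(c)}$ for every $c>0$. Moreover, by items (1) and (2) of the definition of Wick rotation, the $g$-gradient and the $\mathfrak{g}$-gradient of the respective cosmological times are positive multiples of the same vector field $\xi_{T_{cos}}$. Hence the two cosmological flows share the same unparametrized orbits, and the diffeomorphism $\phi:S^{\mathcal{T}_{cos}}_{b}\to S^{\mathcal{T}_{cos}}_{a}$ obtained by flowing along the de Sitter cosmological flow is the same map as the one obtained by flowing along the flat cosmological flow from $S^{T_{cos}}_{\tanh(b)}$ to $S^{T_{cos}}_{\tanh(a)}$.

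Next, since any vector tangent to a cosmological level lies in $\langle\xi_{T_{cos}}\rangle^{\perp}$, the Wick-rotation identity $\mathfrak{g}=\frac{1}{1-T_{cos}^{2}}g$ on $\langle\xi_{T_{cos}}\rangle^{\perp}$ yields the pointwise relation $\mathfrak{g}^{\mathcal{T}_{cos}}_{c}=\cosh^{2}(c)\,g^{T_{cos}}_{\tanh(c)}$ between the induced Riemannian metrics. Applying the flat-case bi-Lipschitz estimate at times $\tanh(b)$ and $\tanh(a)$ gives
$$g^{T_{cos}}_{\tanh(b)}\leq g^{T_{cos}}_{\tanh(a)}\leq \left(\tfrac{\tanh(a)}{\tanh(b)}\right)^{2}g^{T_{cos}}_{\tanh(b)}.$$
Multiplying by $\cosh^{2}(a)$ and rewriting in terms of the de Sitter metrics via the formula above produces
$$\frac{\cosh^{2}(a)}{\cosh^{2}(b)}\,\mathfrak{g}^{\mathcal{T}_{cos}}_{b}\leq \mathfrak{g}^{\mathcal{T}_{cos}}_{a}\leq \left(\frac{\sinh(a)}{\sinh(b)}\right)^{2}\mathfrak{g}^{\mathcal{T}_{cos}}_{b},$$
using the algebraic simplification $\cosh^{2}(a)\cdot\tanh^{2}(a)/(\tanh^{2}(b)\cosh^{2}(b))=\sinh^{2}(a)/\sinh^{2}(b)$. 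Since $a\geq b$ forces $\cosh^{2}(a)/\cosh^{2}(b)\geq 1$, the left inequality degrades to $\mathfrak{g}^{\mathcal{T}_{cos}}_{b}\leq\mathfrak{g}^{\mathcal{T}_{cos}}_{a}$, which is exactly the desired estimate.

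The only non-bookkeeping point is the observation that the de Sitter and flat cosmological flows share the same orbits; this is immediate from the definition of Wick rotation, but it is what allows the flat bi-Lipschitz identification of levels to be reused verbatim as the de Sitter one. Everything else is a direct conformal rescaling by the factor $\cosh^{2}$ on the horizontal distribution.
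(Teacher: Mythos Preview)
Your proof is correct and follows essentially the same route as the paper: both use the Wick-rotation identity $\mathfrak{g}^{\mathcal{T}_{cos}}_{c}=\frac{1}{1-\tanh^{2}(c)}\,g^{T_{cos}}_{\tanh(c)}=\cosh^{2}(c)\,g^{T_{cos}}_{\tanh(c)}$ to transport Bonsante's flat estimate, then simplify via $\cosh^{2}(a)\tanh^{2}(a)/\tanh^{2}(b)\cosh^{2}(b)=\sinh^{2}(a)/\sinh^{2}(b)$. Your version is slightly more explicit in justifying that the flat and de Sitter cosmological flows share the same orbits (so the level identifications match), a point the paper leaves implicit, and you also record the sharper lower bound $\cosh^{2}(a)/\cosh^{2}(b)$ before degrading it to $1$.
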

\begin{proof}
Suppose that $b<a$. We have $$\mathfrak{g}^{\mathcal{T}_{cos}}_{a}=\frac{1}{(1-\tanh^{2}(a))}g^{T_{cos}}_{\tanh(a)}$$ 
But by Proposition \ref{bonsanteporputilisépreuve} $$g^{T_{cos}}_{\tanh(b)}\leq g^{T_{cos}}_{\tanh(a)}\leq (\frac{\tanh(a)}{\tanh(b)})^{2}g^{T_{cos}}_{\tanh(b)}$$ Thus $$\mathfrak{g}^{\mathcal{T}_{cos}}_{b}\leq \mathfrak{g}^{\mathcal{T}_{cos}}_{a}\leq (\frac{\sinh(a)}{\sinh(b)})^{2}\mathfrak{g}^{\mathcal{T}_{cos}}_{b}.$$
\end{proof}
\begin{prop}
\label{mehdipropantidesitteralg}
Let $S\subset B(S)$ be a convex $\Gamma$ invariant Cauchy  hypersurface and let $\mathfrak{g}_{S}$ be the metric of $S$. Then,
$(S,\mathfrak{g}_{S})$ is $K^{4}$-bi-Lipschitz to $(S^{\mathcal{T}_{cos}}_{\sup_{S}\mathcal{T}_{cos}}, \mathfrak{g}^{\mathcal{T}_{cos}}_{\sup_{S}\mathcal{T}_{cos}})$, where\\
$K=\frac{\sinh(\sup_{S}\mathcal{T}_{cos})}{\sinh(\inf_{S}\mathcal{T}_{cos})}$.

\end{prop}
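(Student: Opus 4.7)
The plan is to mirror the proof of Proposition \ref{prop.mehdi456677} in the flat case, replacing Proposition \ref{bonsanteporputilis�preuve} by Proposition \ref{promehdi49} for the bi-Lipschitz comparison between the de Sitter cosmological levels, and using Proposition \ref{propositionmehdi3} in its de Sitter form to control the angle between the cosmological normal and the unit normal of $S$. Set $a=\sup_{S}\mathcal{T}_{cos}$ and $b=\inf_{S}\mathcal{T}_{cos}$, so $K=\sinh(a)/\sinh(b)$. The aim is to show that the cosmological projection $\pi_{a}:S\to S^{\mathcal{T}_{cos}}_{a}$ defined by $\pi_{a}(p)=\Phi^{a-\mathcal{T}_{cos}(p)}_{\mathcal{T}_{cos}}(p)$ is a $K^{4}$-bi-Lipschitz homeomorphism.

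Let $\alpha:[0,1]\to S$ be a Lipschitz curve and set $\alpha_{a}=\pi_{a}\circ\alpha$. At almost every $s$, write $\dot{\alpha}(s)=\dot{\mathcal{T}}_{cos}(s)\,N_{\alpha(s)}+w(s)$ with $w(s)$ orthogonal to $\xi_{\mathcal{T}_{cos}}=N$ (and hence tangent to the cosmological level $S^{\mathcal{T}_{cos}}_{\mathcal{T}_{cos}(\alpha(s))}$), which yields
\[
|\dot{\alpha}(s)|^{2}_{\mathfrak{g}}=|w(s)|^{2}_{\mathfrak{g}}-\dot{\mathcal{T}}_{cos}(s)^{2},
\]
so in particular $|\dot{\alpha}(s)|^{2}\leq |w(s)|^{2}$. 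Next, decompose $N_{\alpha(s)}=h(s)\,\mathbf{n}_{\alpha(s)}+v(s)$ with $v(s)\in T_{\alpha(s)}S$: from $|N|^{2}_{\mathfrak{g}}=|\mathbf{n}|^{2}_{\mathfrak{g}}=-1$ one gets $|v(s)|^{2}=h(s)^{2}-1$, and Proposition \ref{propositionmehdi3} gives $h(s)\leq K$, hence $|v(s)|^{2}\leq K^{2}-1$. Since $\dot{\alpha}(s)\perp\mathbf{n}_{\alpha(s)}$, Cauchy--Schwarz on $T_{\alpha(s)}S$ yields $\dot{\mathcal{T}}_{cos}(s)^{2}=\langle v(s),\dot{\alpha}(s)\rangle^{2}\leq (K^{2}-1)|\dot{\alpha}(s)|^{2}$, whence $|w(s)|^{2}\leq K^{2}|\dot{\alpha}(s)|^{2}$.

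The cosmological flow preserves the retraction map and the normal direction, so a direct computation shows that $\dot{\alpha}_{a}(s)=d\Phi^{a-\mathcal{T}_{cos}(\alpha(s))}_{\mathcal{T}_{cos}}(w(s))$: the flow identifies the horizontal part $w(s)$ with a tangent vector of $S^{\mathcal{T}_{cos}}_{a}$ in exactly the way used in Proposition \ref{promehdi49}. Applying that proposition to $\mathcal{T}_{cos}(\alpha(s))\in[b,a]$ gives
\[
|w(s)|^{2}_{\mathfrak{g}}\leq|\dot{\alpha}_{a}(s)|^{2}_{\mathfrak{g}}\leq\left(\frac{\sinh(a)}{\sinh(\mathcal{T}_{cos}(\alpha(s)))}\right)^{2}|w(s)|^{2}_{\mathfrak{g}}\leq K^{2}|w(s)|^{2}_{\mathfrak{g}}.
\]
Combining with the previous bounds gives $|\dot{\alpha}(s)|^{2}\leq|\dot{\alpha}_{a}(s)|^{2}\leq K^{4}|\dot{\alpha}(s)|^{2}$, i.e.\ the required $K^{4}$-bi-Lipschitz estimate $\mathfrak{g}_{S}\leq \pi_{a}^{*}\mathfrak{g}^{\mathcal{T}_{cos}}_{a}\leq K^{4}\mathfrak{g}_{S}$. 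The main bookkeeping issue is to justify that the horizontal/vertical splitting via $\xi_{\mathcal{T}_{cos}}$ matches the level-by-level identification used in Proposition \ref{promehdi49}; this follows because $\xi_{\mathcal{T}_{cos}}$ is proportional to the Lorentzian gradient of $\mathcal{T}_{cos}$ and its orbits are the timelike geodesics normal to every cosmological level, so apart from this verification the computation is a routine transcription of the flat case with the constant $a/b$ replaced by $\sinh(a)/\sinh(b)$.
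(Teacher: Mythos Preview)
Your proof is correct and follows essentially the same approach as the paper: decompose $\dot{\alpha}$ into a vertical part $\dot{\mathcal{T}}_{cos}N$ and a horizontal part $w$ tangent to the cosmological level, use Proposition~\ref{propositionmehdi3} (de Sitter case) to bound $\dot{\mathcal{T}}_{cos}^{2}\leq(K^{2}-1)|\dot{\alpha}|^{2}$, and use Proposition~\ref{promehdi49} to compare $|w|^{2}$ with $|\dot{\alpha}_{a}|^{2}$. The only cosmetic difference is that the paper carries out the computation in the explicit Wick-rotation coordinates $(r,N,T_{cos})$ of the flat domain $\Omega_{1}$, writing $|\dot{\alpha}|_{1}^{2}=\frac{1}{1-T_{cos}^{2}}|\dot{r}+T_{cos}\dot{N}|^{2}-\frac{1}{(1-T_{cos}^{2})^{2}}\dot{T}_{cos}^{2}$, whereas you phrase the same identities intrinsically in the de Sitter metric; the two are equivalent since $\mathcal{T}_{cos}=\operatorname{argth}(T_{cos})$ and the cosmological gradient lines coincide.
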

\begin{proof}
Let us denote for simplicity by $a=\sup_{S}\mathcal{T}_{cos}$, by $b=\inf_{S}\mathcal{T}_{cos}$ and by $\left|.\right|_{1}$ the de Sitter norm of $\Omega_{1}$.  Let $\alpha:[0,1]\rightarrow S$ be a Lipschitz curve in $\Omega_{1}$. For almost every $s$ in $[0,1]$ we have, $$\left|\dot{\alpha}(s)\right|_{1}^{2}=\frac{1}{1-T_{cos}^{2}(s)}\left|\dot{r}(s)+T_{cos}(s)\dot{N}(s)\right|^{2}-\frac{1}{(1-T_{cos}^{2}(s))^{2}}\dot{T}_{cos}^{2}(s)$$ Thus by Proposition \ref{promehdi49}, $$\left|\dot{r}(s)+\tanh(b)\dot{N}(s)\right|^{2}_{1}-\dot{\mathcal{T}}_{cos}^{2}(s)\leq \left|\dot{\alpha}(s)\right|_{1}^{2}\leq \left|\dot{r}(s)+\tanh(a)\dot{N}(s)\right|^{2}_{1}$$
Using the same arguments as in the flat case we get that, $$\dot{\mathcal{T}}_{cos}^{2}(s)\leq ((\frac{\sinh(a)}{\sinh(b)})^{2}-1)\left|\dot{\alpha}(s)\right|_{1}^{2}$$
Hence $$\left|\dot{\alpha}(s)\right|_{1}^{2}\geq (\frac{\sinh(b)}{\sinh(a)})^{2} \left|\dot{r}(s)+\tanh(b)\dot{N}(s)\right|^{2}_{1}$$
Then by Proposition \ref{promehdi49} we get $$(\frac{\sinh(b)}{\sinh(a)})^{4}\left|\dot{r}(s)+\tanh(a)\dot{N}(s)\right|^{2}_{1}\leq\left|\dot{\alpha}(s)\right|_{1}^{2}\leq  \left|\dot{r}(s)+\tanh(a)\dot{N}(s)\right|^{2}_{1}.$$ 
\end{proof}
\begin{rmq}
\label{jeudivac1}
Actually in Proposition \ref{mehdipropantidesitteralg} we proved that if $\alpha$ is a spacelike curve contained in the past of the cosmological level $S^{T_{cos}}_{a}$, then the length $l(\alpha)$ of $\alpha$ is less than the length of $\Phi_{T_{cos}}^{a-T_{cos}(\alpha)}$, where $\Phi_{T_{cos}}$ is the cosmological flow.
\end{rmq}

\subsection{The (2+1)-anti de Sitter case}
Let $M$ be the tight past of a $MGHC$ anti de Sitter space-time of dimension $2+1$. Recall that  $\widetilde{M}\simeq (\Omega,\mathfrak{g})$, where $(\Omega,\mathfrak{g})$ is obtained by a Wick rotation from a flat regular domain $(\Omega,g)$. Let $T_{cos}$ and $\mathcal{T}_{cos}$ be respectively the cosmological time of $(\Omega,g)$ and $(\Omega,\mathfrak{g})$.

\begin{prop}
\label{promehdi50}
The cosmological levels $S^{\mathcal{T}_{cos}}_{a}$ and $S^{\mathcal{T}_{cos}}_{b}$ of  $\widetilde{M}$ are $(\frac{\tan(a)}{\tan(b)})^{2}$-bi-Lipschitz one to the other. More precisely, $$(\frac{\cos(a)}{\cos(b)})^{2}\mathfrak{g}^{\mathcal{T}_{cos}}_{b}\leq \mathfrak{g}^{\mathcal{T}_{cos}}_{a}\leq (\frac{\sin(a)}{\sin(b)})^{2}\mathfrak{g}^{\mathcal{T}_{cos}}_{b}$$
\end{prop}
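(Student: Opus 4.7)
The strategy is to mimic the de Sitter argument of Proposition \ref{promehdi49}, transporting Bonsante's bi-Lipschitz estimate for the flat cosmological levels (Proposition \ref{bonsanteporputilis�preuve}) through the Wick rotation relating $(\Omega,g)$ to $(\Omega,\mathfrak{g})$. The reparametrization $\mathcal{T}_{cos}=\arctan(T_{cos})$ tells us that the hypersurfaces $S^{\mathcal{T}_{cos}}_{a}$ and $S^{T_{cos}}_{\tan(a)}$ coincide as subsets of $\Omega$; only the induced Riemannian metric differs, and the two are related by the Wick rotation formulas restricted to $\langle \xi_{T_{cos}}\rangle^{\perp}$.

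First I would use the Wick rotation identity $\mathfrak{g}=\tfrac{1}{1+T_{cos}^{2}}\,g$ on $\langle \xi_{T_{cos}}\rangle^{\perp}$, which is the tangent distribution of the cosmological foliation, together with $1+\tan^{2}(a)=\cos^{-2}(a)$, to obtain on $S^{\mathcal{T}_{cos}}_{a}$ the identity
$$\mathfrak{g}^{\mathcal{T}_{cos}}_{a}=\cos^{2}(a)\,g^{T_{cos}}_{\tan(a)},$$
and similarly $\mathfrak{g}^{\mathcal{T}_{cos}}_{b}=\cos^{2}(b)\,g^{T_{cos}}_{\tan(b)}$. Both identities are pointwise relations between the induced Riemannian metrics on the same underlying hypersurface, one for the Lorentzian metric $g$, one for $\mathfrak{g}$.

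Next I would apply Bonsante's Proposition \ref{bonsanteporputilis�preuve} to the pair of flat cosmological levels corresponding to $T_{cos}=\tan(b)<\tan(a)$, giving
$$g^{T_{cos}}_{\tan(b)}\leq g^{T_{cos}}_{\tan(a)}\leq \left(\tfrac{\tan(a)}{\tan(b)}\right)^{2}g^{T_{cos}}_{\tan(b)}.$$
Multiplying by $\cos^{2}(a)$ and substituting $g^{T_{cos}}_{\tan(b)}=\cos^{-2}(b)\,\mathfrak{g}^{\mathcal{T}_{cos}}_{b}$ yields
$$\tfrac{\cos^{2}(a)}{\cos^{2}(b)}\,\mathfrak{g}^{\mathcal{T}_{cos}}_{b}\leq \mathfrak{g}^{\mathcal{T}_{cos}}_{a}\leq \tfrac{\cos^{2}(a)\tan^{2}(a)}{\cos^{2}(b)\tan^{2}(b)}\,\mathfrak{g}^{\mathcal{T}_{cos}}_{b},$$
and the identity $\cos^{2}(a)\tan^{2}(a)=\sin^{2}(a)$ converts the right-hand factor into $\sin^{2}(a)/\sin^{2}(b)$, which is precisely the claimed two-sided bound. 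The bi-Lipschitz constant $(\tan(a)/\tan(b))^{2}$ then follows since for $0<b<a<\pi/2$ both $\sin(a)/\sin(b)$ and $\cos(b)/\cos(a)$ are dominated by $\tan(a)/\tan(b)$.

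There is no genuine obstacle here: the whole argument is algebraic manipulation, and the only non-trivial ingredient, Bonsante's flat comparison, is an imported result. The one step that requires a line of justification is the fact that the tangent hyperplane to a common cosmological level belongs to $\langle\xi_{T_{cos}}\rangle^{\perp}$, so that the Wick rotation acts on it by the conformal factor $(1+T_{cos}^{2})^{-1}$ and not by the stronger factor $(1+T_{cos}^{2})^{-2}$ reserved for the timelike direction; this uses the defining property of the Wick rotation together with the fact that $\mathcal{T}_{cos}$ and $T_{cos}$ share the same level sets and hence the same orthogonal distribution for both metrics.
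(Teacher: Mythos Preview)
Your proposal is correct and follows essentially the same route as the paper's own proof: both arguments pull back Bonsante's flat bi-Lipschitz estimate through the Wick rotation identity $\mathfrak{g}^{\mathcal{T}_{cos}}_{a}=\frac{1}{1+\tan^{2}(a)}\,g^{T_{cos}}_{\tan(a)}=\cos^{2}(a)\,g^{T_{cos}}_{\tan(a)}$ and then simplify the resulting constants. If anything, your write-up is slightly more explicit than the paper's in justifying that the tangent space to a cosmological level lies in $\langle\xi_{T_{cos}}\rangle^{\perp}$ and hence receives the conformal factor $(1+T_{cos}^{2})^{-1}$.
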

\begin{proof}
Suppose that $b<a$. We have  $$\mathfrak{g}^{\mathcal{T}_{cos}}_{a}=\frac{1}{1+\tan^{2}(a)}g^{T_{cos}}_{\tanh(a)}$$ 
But by  Proposition \ref{bonsanteporputilisépreuve} $$g^{T_{cos}}_{\tan(b)}\leq g^{T_{cos}}_{\tan(a)}\leq (\frac{\tan(a)}{\tan(b)})^{2}g^{T_{cos}}_{\tan(b)}$$ Thus $$(\frac{\cos(a)}{\cos(b)})^{2}\mathfrak{g}^{\mathcal{T}_{cos}}_{b}\leq \mathfrak{g}^{\mathcal{T}_{cos}}_{a}\leq (\frac{\sin(a)}{\sin(b)})^{2}\mathfrak{g}^{\mathcal{T}_{cos}}_{b}.$$
\end{proof}

\begin{prop}
\label{prop123546}
Let $S\subset \widetilde{M}$ be a convex $\Gamma$ invariant Cauchy  surface and let $\mathfrak{g}_{S}$ be the metric of $S$. Then $(S,\mathfrak{g}_{S})$ is $K^{4}$-bi-Lipschitz to $(S^{\mathcal{T}_{cos}}_{\sup_{S}\mathcal{T}_{cos}}, \mathfrak{g}^{\mathcal{T}_{cos}}_{\sup_{S}\mathcal{T}_{cos}})$, where\\
$K=\frac{\tan(\sup_{S}\mathcal{T}_{cos})}{\tan(\inf_{S}\mathcal{T}_{cos})}$
\end{prop}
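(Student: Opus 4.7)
The plan is to adapt the proof of Proposition \ref{mehdipropantidesitteralg} to the anti de Sitter Wick rotation, replacing the hyperbolic ingredients $(1-T_{cos}^{2})$, $\tanh$, $\operatorname{argth}$, $\sinh$ by their trigonometric counterparts $(1+T_{cos}^{2})$, $\tan$, $\arctan$, $\sin$. Setting $a=\sup_{S}\mathcal{T}_{cos}$, $b=\inf_{S}\mathcal{T}_{cos}$ and $K=\tan(a)/\tan(b)$, I would fix a Lipschitz curve $\alpha:[0,1]\to S$, write $\alpha(s)=r(s)+T_{cos}(s)N(s)$ via the flat cosmological retraction and normal, and apply the anti de Sitter Wick rotation $\mathfrak{g}=(1+T_{cos}^{2})^{-2}g$ on $\mathbb{R}\xi_{T_{cos}}$, $\mathfrak{g}=(1+T_{cos}^{2})^{-1}g$ on $\xi_{T_{cos}}^{\perp}$, combined with $\dot{\mathcal{T}}_{cos}=\dot{T}_{cos}/(1+T_{cos}^{2})$ coming from $\mathcal{T}_{cos}=\arctan(T_{cos})$. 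The outcome should be the identity
$$|\dot{\alpha}(s)|^{2}_{\mathfrak{g}}=\frac{|\dot{r}(s)+T_{cos}(s)\dot{N}(s)|^{2}_{g}}{1+T_{cos}^{2}(s)}-\dot{\mathcal{T}}_{cos}^{2}(s),$$
the anti de Sitter analogue of the key identity of Proposition \ref{mehdipropantidesitteralg}.

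Next I would control the two terms separately. For the tangential piece, Proposition \ref{promehdi50} asserts that the cosmological flow is $K^{2}$-bi-Lipschitz between $\mathfrak{g}^{\mathcal{T}_{cos}}_{\mathcal{T}_{cos}(s)}$ and $\mathfrak{g}^{\mathcal{T}_{cos}}_{a}$, which yields
$$K^{-2}|\dot{r}(s)+\tan(a)\dot{N}(s)|^{2}_{\mathfrak{g}}\leq\frac{|\dot{r}(s)+T_{cos}(s)\dot{N}(s)|^{2}_{g}}{1+T_{cos}^{2}(s)}\leq K^{2}|\dot{r}(s)+\tan(a)\dot{N}(s)|^{2}_{\mathfrak{g}}.$$
For the time derivative, I would use the anti de Sitter case of Proposition \ref{propositionmehdi3}, namely $|\langle N_{p},\mathbf{n}_{p}\rangle_{\mathfrak{g}}|\leq K$, combined with the decomposition $N=h\mathbf{n}+v$ with $v\in\mathbf{n}^{\perp}_{\mathfrak{g}}$ and the identities $\langle N,N\rangle_{\mathfrak{g}}=\langle\mathbf{n},\mathbf{n}\rangle_{\mathfrak{g}}=-1$, to deduce $|v|^{2}_{\mathfrak{g}}\leq K^{2}-1$. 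Since $\dot{\alpha}$ is $\mathfrak{g}$-orthogonal to $\mathbf{n}$, the relation $\dot{\mathcal{T}}_{cos}=-\langle v,\dot{\alpha}\rangle_{\mathfrak{g}}$ and Cauchy--Schwarz for spacelike vectors then give $\dot{\mathcal{T}}_{cos}^{2}\leq(K^{2}-1)|\dot{\alpha}|^{2}_{\mathfrak{g}}$.

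Assembling these bounds, the upper estimate $|\dot{\alpha}|^{2}_{\mathfrak{g}}\leq K^{2}|\dot{r}+\tan(a)\dot{N}|^{2}_{\mathfrak{g}}$ is immediate after dropping the negative $\dot{\mathcal{T}}_{cos}^{2}$ term, while the lower estimate
$$|\dot{\alpha}|^{2}_{\mathfrak{g}}\geq K^{-2}|\dot{r}+\tan(a)\dot{N}|^{2}_{\mathfrak{g}}-(K^{2}-1)|\dot{\alpha}|^{2}_{\mathfrak{g}}$$
rearranges to $|\dot{\alpha}|^{2}_{\mathfrak{g}}\geq K^{-4}|\dot{r}+\tan(a)\dot{N}|^{2}_{\mathfrak{g}}$. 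Together these show that the cosmological flow identifies $(S,\mathfrak{g}_{S})$ with $(S^{\mathcal{T}_{cos}}_{a},\mathfrak{g}^{\mathcal{T}_{cos}}_{a})$ up to a $K^{4}$-bi-Lipschitz constant.

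The main obstacle compared to the de Sitter case is that Proposition \ref{promehdi50} is not a monotonicity statement: the lower factor $(\cos(a)/\cos(b))^{2}$ is strictly less than $1$, so the metric comparison contributes an extra $K^{\pm 2}$ factor where the de Sitter argument had only factor $1$. Tracking the Lorentzian signs in the decomposition of $N$ along the intrinsic normal $\mathbf{n}$ is exactly what collapses these factors into the clean constant $K^{4}$ promised by the statement; no genuinely new geometric input beyond the already-established anti de Sitter versions of Propositions \ref{promehdi50} and \ref{propositionmehdi3} is required.
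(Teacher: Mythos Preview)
Your proposal is correct and follows essentially the same route as the paper: the key identity for $|\dot\alpha|_{\mathfrak g}^{2}$ under the Wick rotation, the bound $\dot{\mathcal T}_{cos}^{2}\le(K^{2}-1)|\dot\alpha|_{\mathfrak g}^{2}$ coming from Proposition~\ref{propositionmehdi3}, and the metric comparison of Proposition~\ref{promehdi50} are exactly the three ingredients the paper uses. The only cosmetic difference is that the paper first compares the tangential term to the level $S^{\mathcal T_{cos}}_{b}$ (obtaining the intermediate lower bound $(\sin b/\sin a)^{2}|\dot r+\tan(b)\dot N|_{1}^{2}$) and then passes to the level $S^{\mathcal T_{cos}}_{a}$ via a second application of Proposition~\ref{promehdi50}, whereas you compare directly to level $a$; the resulting inequalities are equivalent after the obvious weakenings $\sin b/\sin a\ge\tan b/\tan a$ and $\cos b/\cos a\le\tan a/\tan b$.
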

\begin{proof}
Let us denote for simplicity by $a=\sup_{S}\mathcal{T}_{cos}$, by $b=\inf_{S}\mathcal{T}_{cos}$ and by $\left|.\right|_{1}$ the anti de Sitter norm of $\Omega$. Let $\alpha:[0,1]\rightarrow S$ be a Lipschitz curve in $S$. For almost every $s$ in $[0,1]$ we have, $$\left|\dot{\alpha}(s)\right|_{1}^{2}=\frac{1}{1+T_{cos}^{2}(s)}\left|\dot{r}(s)+T_{cos}(s)\dot{N}(s)\right|^{2}-\frac{1}{(1+T_{cos}^{2}(s))^{2}}\dot{T}_{cos}^{2}(s)$$
Thus by Proposition \ref{promehdi50}, $$(\frac{\cos(a)}{\cos(b)})^{2}\left|\dot{r}(s)+\tan(b)\dot{N}(s)\right|^{2}_{1}-\dot{\mathcal{T}}_{cos}^{2}(s)\leq \left|\dot{\alpha}(s)\right|_{1}^{2}\leq (\frac{\cos(b)}{\cos(a)})^{2}\left|\dot{r}(s)+\tan(a)\dot{N}(s)\right|^{2}_{1}$$
Using the same arguments as in the flat and the de Sitter case we get that, $$\dot{\mathcal{T}}_{cos}^{2}(s)\leq ((\frac{\tan(a)}{\tan(b)})^{2}-1)\left|\dot{\alpha}(s)\right|_{1}^{2}$$
Hence $$\left|\dot{\alpha}(s)\right|_{1}^{2}\geq (\frac{\sin(b)}{\sin(a)})^{2} \left|\dot{r}(s)+\tan(b)\dot{N}(s)\right|^{2}_{1}$$
Then by Proposition \ref{promehdi50} we get $$(\frac{\tan(b)}{\tan(a)})^{4}\left|\dot{r}(s)+\tan(a)\dot{N}(s)\right|^{2}_{1}\leq\left|\dot{\alpha}(s)\right|_{1}^{2}\leq (\frac{\tan(a)}{\tan(b)})^{4}\left|\dot{r}(s)+\tan(a)\dot{N}(s)\right|^{2}_{1}.$$ 
\end{proof}
\begin{rmq}
\label{jeudivac2}
Actually in Proposition \ref{prop123546} we proved that if $\alpha$ is a spacelike curve contained in the past of the cosmological level $S^{T_{cos}}_{a}$ and in the future of the cosmological level $S^{T_{cos}}_{b}$, then the length $l(\alpha)$ of $\alpha$ is less than  $\frac{\cos(b)}{\cos(a)}l(\Phi_{T_{cos}}^{a-T_{cos}(\alpha)})$, where $\Phi_{T_{cos}}$ is the cosmological flow.
\end{rmq}

\section{Asymptotic behavior in flat (n+1)-space-times}

\subsection{Geometric properties of the  initial singularity}
In this part we will prove the three first points of  Theorem \ref{mehditheorem2}. Let $\Omega$ be a flat future complete regular domain and let $(\Sigma,d_{\Sigma})$, $(\partial \Omega/\sim, \bar{d}_{\partial \Omega})$ be the Initial Singularity and the Horizon associated to $\Omega$. Denote by $(\Sigma^{\bigstar}, d^{\bigstar}_{\Sigma})$ the completion of $(\Sigma,d_{\Sigma})$.
By a result of Bonsante \cite{bonsante1} the metric space $(\Sigma,d_{\Sigma})$ embed isometrically in $(\partial \Omega/\sim, \bar{d}_{\partial \Omega})$.
\begin{prop}
The Horizon $(\partial \Omega/\sim, \bar{d}_{\partial \Omega})$ embeds isometrically in $(\Sigma^{\bigstar}, d^{\bigstar}_{\Sigma})$.
\end{prop}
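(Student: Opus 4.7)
The strategy is to reduce to showing that $\Sigma$ is dense in $(\partial\Omega/\sim, \bar d_{\partial\Omega})$. Once density is established, Bonsante's isometric embedding $(\Sigma, d_\Sigma)\hookrightarrow(\partial\Omega/\sim, \bar d_{\partial\Omega})$ combined with the universal property of the metric completion identifies $\Sigma^\bigstar$ with the completion of the Horizon, and the Horizon then sits isometrically inside $\Sigma^\bigstar$ via the canonical inclusion of a metric space into its completion.

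For the density step, I fix $p\in\partial\Omega$ and seek a sequence $(\sigma_n)\subset\Sigma$ with $d_{\partial\Omega}(\sigma_n,p)\to 0$. I would take $y_n\in\Omega$ converging to $p$ in the ambient Euclidean topology (for instance $y_n=p+\frac1n v$ for a fixed future-timelike $v$), so that $T_{cos}(y_n)\to 0$, and set $\sigma_n:=r(y_n)\in\Sigma$. To bound $d_{\partial\Omega}(\sigma_n,p)$, I would exploit that $\partial\Omega$ is the graph of a $1$-Lipschitz convex function $f:\mathbb R^n\to\mathbb R$: writing $x_q\in\mathbb R^n$ for the spatial projection of $q\in\partial\Omega$, the Lipschitz curve
\[
\alpha_n(t)=(f(x_n(t)),x_n(t)),\qquad x_n(t)=(1-t)x_{\sigma_n}+t\,x_p,
\]
lies in $\partial\Omega$, joins $\sigma_n$ to $p$, and, since $|\nabla f|\leq 1$ almost everywhere, has Lorentzian length
\[
l(\alpha_n)=\int_0^1\sqrt{|\dot x_n|^2-(\nabla f\cdot \dot x_n)^2}\,dt\leq |x_{\sigma_n}-x_p|.
\]
Thus $d_{\partial\Omega}(\sigma_n,p)\to 0$ as soon as $\sigma_n$ converges to $p$ in the spatial Euclidean sense.

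With the approximating sequence in hand, I define $\iota:\partial\Omega/\sim\to\Sigma^\bigstar$ by $\iota([p])=\lim\sigma_n$. All verifications then follow from the triangle inequality and the identity $d_\Sigma=\bar d_{\partial\Omega}|_\Sigma$: the Cauchy character of $(\sigma_n)$ in $d_\Sigma$ comes from $d_\Sigma(\sigma_n,\sigma_m)\leq d_{\partial\Omega}(\sigma_n,p)+d_{\partial\Omega}(p,\sigma_m)$; the independence of $\iota([p])$ from the choice of $(y_n)$ and of the representative $p$ of $[p]$ uses that $p\sim p'$ means $d_{\partial\Omega}(p,p')=0$; and the isometry identity $\bar d_{\partial\Omega}([p],[q])=d_\Sigma^\bigstar(\iota([p]),\iota([q]))$ follows by passing to the limit in $d_\Sigma(\sigma_n,\tau_n)$ for approximating sequences of $p$ and $q$. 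Injectivity of $\iota$ is then automatic from its isometric character.

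The main obstacle is the spatial Euclidean convergence $\sigma_n\to p$ when $p\in\partial\Omega\setminus\Sigma$: writing $y_n-\sigma_n=T_{cos}(y_n)N_{y_n}$ with $N_{y_n}\in\mathbb H^n$, one has $T_{cos}(y_n)\to 0$ but the Euclidean norm $|N_{y_n}|_{eucl}$ may blow up, so a naive choice of $y_n$ need not produce $\sigma_n\to p$ Euclideanly. I would bypass this using the structural fact that such a $p$ lies in the interior of a lightlike face of $\partial\Omega$ whose boundary in $\partial\Omega$ is contained in $\Sigma$, and that $p$ can be joined to any point of this boundary by a Lipschitz path lying in the lightlike face and hence of zero Lorentzian length. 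Taking $\sigma_n$ to be (a sequence converging to) such a boundary point yields $d_{\partial\Omega}(\sigma_n,p)\to 0$ directly, circumventing the need for Euclidean approximation.
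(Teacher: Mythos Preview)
Your overall strategy---show that $\Sigma$ is dense in the Horizon and then invoke the universal property of the completion---is exactly the paper's approach, and your detailed verification that density yields a well-defined isometric embedding is more careful than the paper's terse ``this function is well defined and induces an isometric embedding''.

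The gap is in your bypass. The ``structural fact'' that $p\in\partial\Omega\setminus\Sigma$ lies in a lightlike face whose boundary in $\partial\Omega$ is contained in $\Sigma$ is neither a standard citation from Bonsante nor obviously true in dimension $n+1$ (the boundary of a lightlike face may itself meet other lightlike faces, landing only in $\overline{\Sigma}$). In effect you are assuming precisely the statement that needs to be proved. The paper closes this gap without invoking any face structure: it keeps your original sequence $\sigma_n=r(y_n)$ and observes that since $(y_n)$ is bounded, the retractions $r(y_n)$ stay in a compact of $\partial\Omega$ and hence subconverge (Euclideanly) to some $y\in\overline{\Sigma}$. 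The vector $p-y=\lim(y_n-r(y_n))$ is a limit of future timelike vectors, hence causal; achronality of $\partial\Omega$ then forces $p-y$ to be lightlike, so the straight segment $[y,p]\subset\partial\Omega$ has zero Lorentzian length and $d_{\partial\Omega}(p,y)=0$. Combined with your graph bound $d_{\partial\Omega}(\sigma_n,y)\leq|x_{\sigma_n}-x_y|$, this gives $d_{\partial\Omega}(\sigma_n,p)\to 0$ without ever needing $\sigma_n\to p$ Euclideanly.

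So your first approach was the right one; the obstacle you identified is real, but the resolution is not to discard $\sigma_n=r(y_n)$ in favour of an unproven face decomposition---it is to notice that $\sigma_n$ accumulates on the lightlike ray through $p$ rather than on $p$ itself, which is just as good.
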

\begin{proof}
Let $\Sigma_{\infty}$ be the set of Cauchy sequences of $(\Sigma,d_{\Sigma})$ and let $d_{\infty}$ the pseudo-distance defined by: if $(x_{i})_{i\in \mathbb{N}}$ and $(y_{i})_{i\in \mathbb{N}}$ are two Cauchy sequences of $(\Sigma,d_{\Sigma})$, then $d_{\infty}((x_{i})_{i},(y_{i})_{i})=\lim_{i\rightarrow \infty} d_{\Sigma}(x_{i},y_{i})$. Denote by $\pi':\Sigma_{\infty}\rightarrow \Sigma^{\bigstar}$ the projection of $\Sigma_{\infty}$ in $\Sigma^{\bigstar}$. 

Let $x$ in $\partial \Omega\setminus \Sigma$ and let $(p_{i})_{i\in\mathbb{N}}$ be a sequence of $\Omega$ converging to $x$ and such that $T_{cos}(p_{i+1})<T_{cos}(p_{i})$,  for every $i$ in $\mathbb{N}$. Note that the sequence $r(p_{i})$ stays in a compact of $\partial \Omega$. Thus extract a subsequence if necessary, we can suppose that  $r(p_{i})$ converges to $y$ in $\partial \Omega$. Then the vector $x-y$ is a causal vector. But $\partial \Omega$ is achronal so $x-y$ is lightlike. Hence $y$ should belongs to the lightlike ray passing through $x$ which is unique by Lemma \cite[Lemma~4.11]{bonsante1}. Thus for every $x$ in $\partial \Omega$, there exists a sequence $(x_{i})_{i\in \mathbb{N}}$ of $\Sigma$ converging to a point $y\in\overline{\Sigma}$ such that $d_{\partial \Omega}(x,y)=0$. 

Now let $f:\partial \Omega\rightarrow \Sigma^{\bigstar}$ be the function which associates  to each $x$ in $\partial \Omega$ the image by $\pi'$ of a sequence $(x_{i})_{i\in \mathbb{N}}$ in $\Sigma$ converging to a point $y$ of $\overline{\Sigma}$ such that $d_{\partial \Omega}(x,y)=0$. This function is well defined and induces an isometric embedding from $(\partial \Omega/\sim, \bar{d}_{\partial \Omega})$ to $(\Sigma^{\bigstar}, d^{\bigstar}_{\Sigma})$.
\end{proof}

\begin{prop}
\label{propmercredi}
For every $x$ and $y$ in $\Sigma$, there exists a geodesic in $(\partial \Omega^{+}/\sim, \bar{d}_{\partial \Omega^{+}})$ joining $x$ and $y$.
\end{prop}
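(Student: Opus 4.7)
My plan is to approximate $x$ and $y$ by interior points on cosmological levels and transfer Riemannian length-minimizing curves down to $\partial\Omega$ via the retraction $r$. Since $x, y \in \Sigma$, each admits a spacelike support hyperplane of $\Omega$; choosing future-unit-timelike normals $N_x, N_y$ to such hyperplanes and setting $x_\epsilon := x + \epsilon N_x$, $y_\epsilon := y + \epsilon N_y$, one has $x_\epsilon, y_\epsilon \in S^{T_{cos}}_\epsilon =: S_\epsilon$ with $r(x_\epsilon) = x$ and $r(y_\epsilon) = y$. On the $C^{1,1}$ convex level $S_\epsilon$, which is complete in its induced Riemannian metric $g_\epsilon$, I would pick a length-minimizing curve $\gamma_\epsilon : [0, L_\epsilon] \to S_\epsilon$ from $x_\epsilon$ to $y_\epsilon$, where $L_\epsilon := d_{g_\epsilon}(x_\epsilon, y_\epsilon)$.

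Setting $\beta_\epsilon := r \circ \gamma_\epsilon$ gives a Lipschitz curve in $\partial\Omega$ joining $x$ and $y$. By the limiting form of Proposition \ref{propositionmehdi1} applied to the $C^{1,1}$ cosmological time (projecting from $S_\epsilon$ down to $\partial\Omega$ along the cosmological flow shortens Lorentzian length), one gets $l(\beta_\epsilon) \leq L_\epsilon$, and hence $d_\Sigma(x,y) \leq L_\epsilon$. For the reverse estimate, any Lipschitz curve $\delta$ in $\partial\Omega$ joining $x$ and $y$ can be pushed into the interior along the cosmological flow to produce a curve on $S_\epsilon$ joining $x_\epsilon$ and $y_\epsilon$ whose Riemannian length exceeds $l(\delta)$ by $o(1)$ as $\epsilon \to 0$; so $L_\epsilon \to d_\Sigma(x, y)$.

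To extract a limit from the family $(\beta_\epsilon)$, I would reparametrize each $\beta_\epsilon$ proportional to a suitable arc length (controlling both the Lorentzian length and the Euclidean projection so that the family is equi-Lipschitz for a fixed background metric on $\partial\Omega$); Arzel\`a--Ascoli then yields a subsequence converging uniformly to a Lipschitz curve $\alpha : [0,1] \to \partial\Omega$ from $x$ to $y$. Lower semi-continuity of the Lorentzian length functional under uniform convergence of Lipschitz curves yields $l(\alpha) \leq \liminf_\epsilon l(\beta_\epsilon) \leq d_\Sigma(x, y)$, and the reverse inequality is immediate, so $l(\alpha) = d_\Sigma(x, y)$; the image of $\alpha$ in $\partial\Omega/{\sim}$ is the desired geodesic. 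I expect the compactness step to be the main obstacle: a minimizing sequence of curves in $\partial\Omega$ may oscillate freely in lightlike directions at no cost in Lorentzian length, so obtaining the required equi-continuity for Arzel\`a--Ascoli forces one to work effectively in the quotient $\partial\Omega/{\sim}$, or to explicitly straighten each $\beta_\epsilon$ to remove lightlike detours before passing to the limit.
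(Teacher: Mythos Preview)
Your overall strategy coincides with the paper's: take geodesics $\alpha_a$ on the cosmological levels $S^{T_{cos}}_a$ between the points lying over $x$ and $y$, push them to $\partial\Omega$ via the retraction $r$, and extract a limit by Ascoli--Arzel\`a; the length estimates you quote ($l(r\circ\alpha_a)\le d^{T_{cos}}_a$ and $d^{T_{cos}}_a\to d_\Sigma$) are exactly Bonsante's Lemma~7.4 and Proposition~7.8, which the paper invokes as well.

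The genuine gap is precisely the compactness step you flag at the end, and the paper fills it with two ingredients you do not have. First (Proposition~\ref{provrt}): along the geodesic $\alpha_a$ on $S^{T_{cos}}_a$ from $p$ to $q$ one has $\langle\dot\alpha_a(s),N_p\rangle\le 0$ and $\langle\dot\alpha_a(s),N_q\rangle\ge 0$ for every $s$; this is a monotonicity statement for the height functions $\langle\,\cdot\,,N_p\rangle$, $\langle\,\cdot\,,N_q\rangle$ along geodesics of the convex level set. Second (Lemma~\ref{lemmmehdimb}): when $N_p\ne N_q$, the set $\{v\in dS_n:\langle v,N_p\rangle\ge 0,\ \langle v,N_q\rangle\le 0\}$ is precompact in $\mathbb{R}^{1,n}$. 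Together these confine all unit tangents $\dot\alpha_a(s)$ to a fixed compact independent of $a$, so the Euclidean lengths $L_{euc}(\alpha_a)$ are uniformly bounded and every $\alpha_a$ sits in a single compact $F'\subset\overline\Omega$. (The case $N_p=N_q$ is treated separately: the straight segment $[x,y]$ already lies in $\partial\Omega$.) This is the replacement for the ``equi-Lipschitz in a background metric'' control you could not produce. The paper then runs Ascoli--Arzel\`a directly in the quotient $(\partial\Omega/{\sim},\bar d_{\partial\Omega})$: the curves $\pi\circ r\circ\alpha_a$ stay in the compact image $\pi(J^-(F')\cap\partial\Omega)$, and equicontinuity there is immediate from $\bar d_{\partial\Omega}(\pi r\alpha_a(s_1),\pi r\alpha_a(s_2))\le d^{T_{cos}}_a(\alpha_a(s_1),\alpha_a(s_2))=|s_1-s_2|$. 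This is exactly the ``work in the quotient'' move you anticipated, and it makes the lightlike-oscillation worry disappear.
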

We will need the following lemma:
\begin{lemm}
\label{lemmmehdimb}
Consider the Lorentzian model $\mathbb{H}^{n}$ of the hyperbolic space. For every $n_{1}\neq n_{2}$ in $\mathbb{H}^{n}$, the subset defined by $F=\left\{v\in dS_{n} \mbox{~such that~} \left\langle v,n_{1}\right\rangle\geq 0 \mbox{~and~} \left\langle v,n_{2}\right\rangle\leq 0\right\}$ is precompact.
\end{lemm}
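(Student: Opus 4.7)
The plan is to argue by contradiction and extract a limiting lightlike direction that is incompatible with the two sign conditions. Embed $dS_n$ and $\mathbb{H}^n$ in $\mathbb{R}^{1,n}$ equipped with an auxiliary Euclidean norm $|\cdot|$. Suppose $F$ is not precompact, so there exists a sequence $(v_k)_{k\in\mathbb{N}}$ in $F$ with $|v_k|\to\infty$. Set $\hat v_k := v_k/|v_k|$; this sequence lies on the Euclidean unit sphere, so after extracting a subsequence we may assume $\hat v_k \to v_\infty$ with $|v_\infty|=1$.

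The first step is to identify the Lorentzian type of $v_\infty$. Since $\langle v_k,v_k\rangle=1$, dividing by $|v_k|^2$ and passing to the limit gives $\langle v_\infty,v_\infty\rangle=0$, so $v_\infty$ is a nonzero lightlike vector. Dividing the defining inequalities by $|v_k|>0$ and taking limits yields $\langle v_\infty,n_1\rangle\geq 0$ and $\langle v_\infty,n_2\rangle\leq 0$.

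The second step is to exploit the interaction of lightlike vectors with the future hyperboloid. The Lorentz-orthogonal of a future-timelike vector $n\in\mathbb{H}^n$ is a spacelike hyperplane, so it contains no nonzero lightlike vector; hence $\langle v_\infty,n\rangle\neq 0$ for every $n\in\mathbb{H}^n$. Consequently the continuous nonvanishing function $n\mapsto\langle v_\infty,n\rangle$ has constant sign on the connected set $\mathbb{H}^n$ (this sign depends only on whether $v_\infty$ is future- or past-lightlike, but we do not need that description explicitly). Applied to $n_1$ and $n_2$, this forces $\langle v_\infty,n_1\rangle$ and $\langle v_\infty,n_2\rangle$ to have the same nonzero sign, contradicting the weak inequalities obtained above.

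The contradiction shows $F$ is bounded in $\mathbb{R}^{1,n}$. Since the two linear inequalities together with the quadratic equation $\langle v,v\rangle=1$ all define closed conditions, $F$ is closed in $\mathbb{R}^{1,n}$, and being bounded and closed it is compact, which is stronger than the claimed precompactness in $dS_n$. The only mildly delicate point is the connectedness argument ensuring that $\langle v_\infty,n_1\rangle$ and $\langle v_\infty,n_2\rangle$ share a sign; everything else is a direct compactness-normalization argument.
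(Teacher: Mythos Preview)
Your proof is correct, and it takes a genuinely different route from the paper's. The paper argues by direct algebraic computation: it decomposes $v$ along $n_1$ and $n_1^{\perp}$, observes that the Euclidean norm satisfies $\|v\|^2 = 1 + 2\langle v,n_1\rangle^2$, and then uses the second inequality $\langle v,n_2\rangle \leq 0$ together with a further orthogonal decomposition to obtain the explicit bound
\[
0 \leq \langle v,n_1\rangle \leq \frac{1}{\sqrt{\langle n_1,n_2\rangle^2 - 1}}.
\]
Your argument, by contrast, is a soft contradiction: normalize an escaping sequence to produce a lightlike limit $v_\infty$, and then observe that $n\mapsto \langle v_\infty,n\rangle$ is nowhere zero on the connected hyperboloid $\mathbb{H}^n$, hence of constant sign, which is incompatible with the two weak inequalities inherited in the limit. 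The paper's approach has the advantage of yielding a quantitative bound (which in principle could feed into later estimates), while yours is shorter, more conceptual, and avoids the slightly fiddly coordinate decompositions. Both are entirely adequate for the use the lemma is put to later in the paper.
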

\begin{proof}
Fix an origin of the Minkowski space $\mathbb{R}^{1,n}$.
Let $n_{1}$ and $n_{2}$ in $\mathbb{H}^{n}$ and $v$ in $\mathbb{R}^{1,n}$ such that $\left|v\right|^{2}=1$, $\left\langle v,n_{1}\right\rangle\geq 0$ and $\left\langle v,n_{2}\right\rangle\leq 0$.

 One can write $v=-\left\langle v,n_{1}\right\rangle n_{1}+v_{1}$, where $v_{1}$ is in $n_{1}^{\bot}$. We have then
$$-\left\langle v,n_{1}\right\rangle^{2}+\left|v_{1}\right|^{2}=1,$$
And hence $$\left|\left|v\right|\right|^{2}=1+2\left\langle v,n_{1}\right\rangle^{2},$$ where $\left|\left|.\right|\right|$ is the euclidean norm of $\mathbb{R}^{n+1}$.

Thus if we want to proof that  $v$ stays in a compact, we need to proof that $\left\langle v,n_{1}\right\rangle$ is bounded independently of $v$.

In the same way we can write $n_{2}=-\left\langle n_{1},n_{2} \right\rangle n_{1}+ u_{1}$, where $u_{1}$ is in $n_{1}^{\bot}$. Thus $$-(\left\langle n_{2},n_{1}\right\rangle)^{2}+\left|u_{1}\right|^{2}=-1.$$
But $\left\langle v,n_{2}\right\rangle\leq 0$, so $$-\left\langle n_{1},n_{2}\right\rangle\left\langle n_{1},v\right\rangle+\left\langle v_{1},u_{1}\right\rangle\leq 0,$$ Hence $$0\leq\left\langle v,n_{1}\right\rangle\leq \frac{\left\langle v_{1},u_{1}\right\rangle}{\left\langle n_{1},n_{2}\right\rangle}.$$

Let's write $v_{1}=-\left\langle v_{1},u_{1}\right\rangle  u_{1}+v'_{1}$, where $v'_{1}$ is in $n_{1}^{\bot}\cap u_{1}^{\bot}$. Thus $$-\left\langle v,n_{1}\right\rangle^{2}+\left\langle v_{1},u_{1}\right\rangle^{2}+\left|v'_{1}\right|^{2}=1,$$ Then $$\left\langle v_{1},u_{1}\right\rangle^{2}\leq \frac{\left\langle n_{1},n_{2}\right\rangle^{2}}{\left\langle n_{1},n_{2}\right\rangle^{2}-1}$$ And this proves that $$0\leq\left\langle v,n_{1}\right\rangle\leq \frac{1}{\sqrt{\left\langle n_{1},n_{2}\right\rangle^{2}-1}}.$$
\end{proof}
\begin{prop}
\label{provrt}
Let $\alpha:[0,l]\rightarrow S^{T_{cos}}_{a} $ be the geodesic joining two point $p$ and $q$ of $S^{T_{cos}}_{a}$. Then for every $s$ in $[0,l]$ we have $$\left\langle \dot{\alpha}(s), N_{p}\right\rangle \leq 0 \mbox{~and~} \left\langle \dot{\alpha}(s), N_{q}\right\rangle \geq 0,$$ where $N_{p}$ and $N_{q}$ are the normal vectors of $S^{T_{cos}}_{a}$ at $p$ and $q$ respectively.
\end{prop}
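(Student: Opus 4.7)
The plan is to introduce the scalar function $f:[0,l]\to\mathbb{R}$ defined by $f(s)=\langle\alpha(s)-p,N_p\rangle$ and show that it is strictly concave with $f'(0)=0$, which immediately forces $f'(s)\leq 0$ on $[0,l]$. A symmetric argument applied to $g(s)=\langle\alpha(s)-q,N_q\rangle$ will yield the companion inequality.

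I would first observe that $\alpha(0)=p$ together with $\dot\alpha(0)\in T_p S^{T_{cos}}_{a}=N_p^\perp$ gives $f(0)=0$ and $f'(0)=0$. I would then compute $f''$. Because $\alpha$ is an intrinsic geodesic of the spacelike hypersurface $S^{T_{cos}}_{a}$ endowed with its induced Riemannian metric, the ambient acceleration $\ddot\alpha(s)$ is purely normal to $S^{T_{cos}}_{a}$ at $\alpha(s)$, so in the sign convention of Section~3 one has $\ddot\alpha(s)=\Pi(\dot\alpha(s),\dot\alpha(s))\,N_{\alpha(s)}$, where $\Pi$ is the second fundamental form; convexity of $S^{T_{cos}}_{a}$ makes $\Pi$ positive-definite, hence $\Pi(\dot\alpha,\dot\alpha)>0$. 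Meanwhile $N_{\alpha(s)}$ and $N_p$ are both future-oriented unit timelike vectors in $\mathbb{R}^{1,n}$, so the reverse Cauchy--Schwarz inequality gives $\langle N_{\alpha(s)},N_p\rangle\leq -1$. Combining these,
$$f''(s)=\langle\ddot\alpha(s),N_p\rangle=\Pi(\dot\alpha,\dot\alpha)\,\langle N_{\alpha(s)},N_p\rangle<0,$$
so $f$ is strictly concave on $[0,l]$. Since $f'$ is thus non-increasing and $f'(0)=0$, we conclude $f'(s)\leq 0$ for every $s\in[0,l]$, which is the first inequality.

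For the second inequality I would repeat the identical computation with $g(s)=\langle\alpha(s)-q,N_q\rangle$ in place of $f$. The function $g$ is strictly concave on $[0,l]$ for the same reason, and the boundary condition becomes $g'(l)=\langle\dot\alpha(l),N_q\rangle=0$, since $\dot\alpha(l)\in T_q S^{T_{cos}}_{a}=N_q^\perp$. A non-increasing derivative with terminal value zero must be non-negative, so $g'(s)\geq 0$ on $[0,l]$.

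The one thing that requires care, rather than genuine difficulty, is keeping the sign conventions straight: namely that in the paper's convention the second fundamental form is positive-definite for convex spacelike hypersurfaces (so that $\ddot\alpha$ is a positive multiple of the future unit normal $N_\alpha$), and that two future-oriented unit timelike vectors in $\mathbb{R}^{1,n}$ have Lorentz inner product bounded above by $-1$. Once these are pinned down the argument is essentially a one-line differential inequality, and I do not anticipate any serious obstacle.
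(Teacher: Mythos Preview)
Your argument is correct and more self-contained than the paper's. The paper fixes affine coordinates with $p$ at the origin and $N_p=(1,0,\dots,0)$, writes $S^{T_{cos}}_{a}$ as the graph of a $1$-Lipschitz convex $C^{1}$ function $\phi:\mathbb{R}^{n}\to\mathbb{R}$, observes that $\langle\dot\alpha(s),N_p\rangle=-\dot\phi(s)$, and then simply cites \cite[Lemma~7.7]{bonsante1} for the fact that $\phi$ is non-decreasing along the geodesic issued from $p$. Your concavity computation is precisely a coordinate-free proof of that cited lemma: in those coordinates your $f$ is $-\phi$ (up to an additive constant), and $f''\le 0$ together with $f'(0)=0$ gives $\dot\phi\ge 0$. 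So the two proofs express the same mechanism; yours trades the external reference for a short differential inequality.

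Two small points to tighten. First, the cosmological levels are only $C^{1,1}$ (the paper itself writes $\phi$ as a convex $C^{1}$ function), so $\ddot\alpha$ and $\Pi$ exist only almost everywhere; your inequality $f''(s)\le 0$ should therefore be read a.e., which still forces the Lipschitz function $f'$ to be non-increasing. Second, convexity gives $\Pi\ge 0$ rather than $\Pi>0$ in general (cosmological levels can contain flat pieces over spacelike strata of the singularity), so $f$ is concave rather than strictly concave --- but non-strict concavity is all your argument actually uses.
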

\begin{proof}
Let $(x_{0},x_{1},...,x_{n})$ be a coordinate system of $\mathbb{R}^{1,n}$ such that $p=(0,...,0)$ and $N_{p}=(1,0,...,0)$. The hypersurface $S^{T_{cos}}_{a}$ is the graph of $1$-Lipschitz convex $C^{1}$ function $\phi:\mathbb{R}^{n}\rightarrow \mathbb{R}$. We have $\left\langle \dot{\alpha}(s),N_{p}\right\rangle=-\dot{\phi}(s)$. By \cite[Lemma~7.7]{bonsante1}, $\phi$ is increasing, hence $\left\langle \dot{\alpha}(s),N_{p}\right\rangle\leq 0$. In the same way we prove that $\left\langle \dot{\alpha}(s),N_{p}\right\rangle\geq 0$.

\end{proof}

Let $T_{cos}$ the cosmological time of $\Omega$ and consider $X_{T_{cos}}$ the space of  gradient lines of $T_{cos}$. Note that the normal application and the retraction map of $\Omega$ can be seen as maps on $X_{T_{cos}}$.

\textbf{Proof of Proposition \ref{propmercredi}}.
Let $\pi:\partial \Omega\rightarrow \partial \Omega/\sim $ be the projection of $\partial \Omega$ in $\partial \Omega/\sim $. Note that if $F$ is a compact of $\partial \Omega\subset \mathbb{R}^{1,n}$, then $\pi(F)$ is a compact of $(\Omega/\sim,\bar{d}_{\partial \Omega})$. Let $d_{euc}$ be the euclidean metric of $\mathbb{R}^{n+1}$ and $L_{euc}$ its associated euclidean length structure. Denote by $L$ the length structure defined on $\partial \Omega/\sim$ by the distance $\bar{d}_{\partial \Omega^{+}}$ and by $\mathcal{L}$ the one induced by the Minkowski metric.
 
 We want to proof that for every $\mathbf{p}$ and $\mathbf{q}$ in $X_{T_{cos}}$, there is a geodesic in $(\partial \Omega^{+}/\sim, \bar{d}_{\partial \Omega^{+}})$ joining $r(\mathbf{p})$ and $r(\mathbf{q})$. There are two distinct cases:

$1$) If $N_{\mathbf{p}}=N_{\mathbf{q}}$. Then by Proposition \cite[Proposition~4.14]{bonsante1},  $r(\mathbf{p})+ s(r(\mathbf{p})-r(\mathbf{q}))$ is contained in $\partial \Omega$ for every $s$ in $[0,1]$. Clearly $r(\mathbf{p})+ s(r(\mathbf{p})-r(\mathbf{q}))$ is a geodesic in $(\partial \Omega/\sim, \bar{d}_{\partial \Omega})$ joining $r(\mathbf{p})$ and $r(\mathbf{q})$.

$2$) If $N_{\mathbf{p}}\neq N_{\mathbf{q}}$. For every $0<a<1$, let $\alpha_{a}:[0,l_{a}]\rightarrow S^{T_{cos}}_{a}$ be the geodesic joining $\mathbf{p}$ and $\mathbf{q}$ i.e joining the intersection point of $\mathbf{p}$ and $ S^{T_{cos}}_{a}$ with the intersection point of $\mathbf{q}$ and $S^{T_{cos}}_{a}$. By Proposition \ref{provrt} we have $\left\langle \dot{\alpha}_{a}(s), N_{\mathbf{p}}\right\rangle \leq 0 \mbox{~and~} \left\langle \dot{\alpha}_{a}(s), N_{\mathbf{q}}\right\rangle \geq 0$, for every $s$ in $[0,l_{a}]$.
Therefore, by Lemma \ref{lemmmehdimb}, there is a compact $F\subset dS_{n}\subset \mathbb{R}^{n+1}$ such that $\dot{\alpha}_{a}(s)\in F$ for every $0<a<1$ and every $s$ in $[0,l_{a}]$. There is hence a constant $C>0$ such that $L_{euc}(\alpha_{a})\leq C$ for every $0<a<1$. This means that the geodesics $\alpha_{a}$ are contained in a compact $F'$ of $\overline{\Omega}$.

On the one hand, as $J^{-}(F')\cap \partial \Omega$ is compact in $\partial \Omega$, the curves $\pi\circ r\circ\alpha_{a}$ stay in a compact  of  $(\partial \Omega/\sim, \bar{d}_{\partial \Omega})$.

On the other hand, for every $0<a<1$ and every $s_{1}$, $s_{2}$ in $[0,l_{a}]$ we have,
$$\bar{d}_{\partial \Omega}(\pi(r(\alpha_{a}(s_{1}))),\pi(r(\alpha_{a}(s_{2})))) = d_{\Sigma}(r(\alpha_{a}(s_{1})),r(\alpha_{a}(s_{2}))).$$ But by \cite[Lemma~7.4, Proposition~7.8]{bonsante1},
 $$d_{\Sigma}(r(\alpha_{a}(s_{1})),r(\alpha_{a}(s_{2})))\leq  d^{T_{cos}}_{a}(\alpha_{a}(s_{1})), \alpha_{a}(s_{2})))$$

And hence, $$\bar{d}_{\partial \Omega}(\pi(r(\alpha_{a}(s_{1}))),\pi(r(\alpha_{a}(s_{2}))))\leq \left|s_{1}-s_{2}\right|.$$
This proves that the family $(\pi\circ r\circ\alpha_{a})_{0<a<1}$ is an equicontinuous family of curves. Thus by the Ascoli-Arzéla Theorem we deduce that $\pi\circ r\circ\alpha_{a}$ converges uniformly in $(\partial \Omega/\sim, \bar{d}_{\partial \Omega})$ to a curve $\alpha$ joining $r(\mathbf{p})$ and $r(\mathbf{q})$. Since $L(\pi\circ r\circ\alpha_{a})\leq \mathcal{L}(\pi\circ r\circ\alpha_{a})$  and $\lim_{a\rightarrow 0}\mathcal{L}(\pi\circ r\circ\alpha_{a}) =\bar{d}_{\partial \Omega}(r(\mathbf{p}),r(\mathbf{q}))$, we have that $\lim_{a\rightarrow 0}L(\pi\circ r\circ\alpha_{a})=\bar{d}_{\partial \Omega}(r(\mathbf{p}),r(\mathbf{q}))$. But the length structure $L$ is lower semi continuous, thus $L(\alpha)=\bar{d}_{\partial \Omega}(r(\mathbf{p}),r(\mathbf{q}))$.\fin

\begin{prop}
For every $a>0$, the cosmological level $(S_{a}^{T_{cos}},d_{a}^{T_{cos}})$ is a $CAT(0)$ metric space.
\end{prop}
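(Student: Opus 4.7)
The plan is to apply Proposition~\ref{prop.cat000000}. The cosmological level $(S_{a}^{T_{cos}}, d_{a}^{T_{cos}})$ is a complete length space: as a Cauchy hypersurface in a $MGHC$ space-time, its intrinsic Riemannian metric is complete, and the approximative midpoints property is automatic for a Riemannian length metric. Once completeness and the approximative midpoints property are in hand, everything reduces to verifying the $\operatorname{CAT}(0)$ $4$-points condition, which I would obtain from the non-positive sectional curvature of $g_{a}^{T_{cos}}$ via Cartan--Hadamard.

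The core computation is the Gauss equation. For a smooth strictly convex spacelike hypersurface $S \subset \mathbb{R}^{1,n}$ with future-pointing timelike unit normal $N$ and second fundamental form $II$, the Gauss equation (with the sign flip coming from $\langle N, N\rangle = -1$) reads, for $X, Y \in T_{p}S$,
\[
\langle R^{S}(X,Y)Y, X\rangle \;=\; II(X,Y)^{2} - II(X,X)\,II(Y,Y).
\]
Since $T_{cos}$ is concave by Bonsante, its levels are convex, i.e.\ $II$ is positive semi-definite. Cauchy--Schwarz then gives $II(X,Y)^{2} \leq II(X,X)\,II(Y,Y)$, so every sectional curvature of $(S_{a}^{T_{cos}}, g_{a}^{T_{cos}})$ is non-positive. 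Since $S_{a}^{T_{cos}}$ is a convex graph over $\mathbb{R}^{n}$, it is simply connected; together with completeness and non-positive curvature, Cartan--Hadamard yields that $(S_{a}^{T_{cos}}, d_{a}^{T_{cos}})$ is $\operatorname{CAT}(0)$.

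The main obstacle is that $T_{cos}$ is only $C^{1,1}$, so the second fundamental form of $S_{a}^{T_{cos}}$ exists only almost everywhere, and the Riemannian Cartan--Hadamard theorem does not apply literally. I would handle this by approximation: exhaust $\Omega$ by smooth strictly convex Cauchy hypersurfaces $S_{\varepsilon}$ (for instance obtained by convolving the convex $1$-Lipschitz graph function $f$ defining $S_{a}^{T_{cos}}$ with a smooth mollifier, so that $S_{\varepsilon}$ is a $C^{\infty}$ strictly convex spacelike hypersurface in $\Omega$). The argument above applies to each $S_{\varepsilon}$, showing $(S_{\varepsilon}, d_{\varepsilon})$ is $\operatorname{CAT}(0)$. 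The induced length metrics $d_{\varepsilon}$ converge uniformly on compact sets to $d_{a}^{T_{cos}}$ as $\varepsilon \to 0$, and $\operatorname{CAT}(0)$ is preserved under uniform convergence of complete length metrics, giving the conclusion for $S_{a}^{T_{cos}}$ itself.
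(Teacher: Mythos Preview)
Your argument is correct and follows essentially the same route as the paper: write $S_{a}^{T_{cos}}$ as the graph of a convex $C^{1}$ function, mollify to obtain smooth convex spacelike graphs, and pass to the limit. The paper is terser---it cites \cite[Theorem~II.1A.6]{bridson} in place of your explicit Gauss-equation/Cartan--Hadamard computation---but the underlying strategy is identical.
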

\begin{proof}
The cosmological hypersurface $S_{a}^{T_{cos}}$ is the graph of a $C^{1}$ convex function $\phi:\mathbb{R}^{n}\rightarrow \mathbb{R}$. Using convolution, one can get a uniform $C^{1}$ approximation of $\phi$ by smooth convex functions $\psi_{i}:\mathbb{R}^{n}\rightarrow \mathbb{R}$. The proposition follows then from \cite[Theorem~II.1A.6]{bridson}. 
\end{proof}

\begin{prop}
The completion $(\Sigma^{\bigstar}, d^{\bigstar}_{\Sigma})$ of the initial singularity $(\Sigma, d_{\Sigma})$ is a $CAT(0)$ metric space.
\end{prop}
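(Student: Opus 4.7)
The natural strategy is to invoke the characterization of Proposition \ref{prop.cat000000}: since $(\Sigma^{\bigstar}, d^{\bigstar}_\Sigma)$ is complete by construction, it is enough to check that it has the approximative midpoints property and satisfies the CAT(0) 4-points condition. Both conditions pass from a dense subset to a completion by continuity of distances, so the work reduces to establishing them on $\Sigma$. I propose to obtain both by a lift-and-project argument, transferring them from the cosmological levels $(S^{T_{cos}}_a, d^{T_{cos}}_a)$, which were just shown to be CAT(0).

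\textbf{Lift-and-project.} Given $x \in \Sigma$, lift it to a gradient line $\mathbf{p} \in X_{T_{cos}}$ with $r(\mathbf{p}) = x$ and set $p^a := \mathbf{p} \cap S^{T_{cos}}_a$; similarly $q^a$ from $y$. The key analytical input is the asymptotic identity
\[
\lim_{a \to 0} d^{T_{cos}}_a(p^a, q^a) \;=\; d_\Sigma(x, y).
\]
The inequality $d_\Sigma(x, y) \leq d^{T_{cos}}_a(p^a, q^a)$ is Bonsante's (used already in the proof of Proposition \ref{propmercredi}); the reverse asymptotic inequality should be obtained by lifting a near-geodesic of $(\partial \Omega/\sim, \bar d_{\partial \Omega})$ joining $x$ and $y$ to a curve on $S^{T_{cos}}_a$ via the decomposition $p = r(p) + a N_p$, and showing that the $g_a$-length of the lift differs from the Lorentzian length of its $r$-projection by an error that vanishes with $a$, in the same spirit as the convergence $\mathcal L(\pi\circ r \circ \alpha_a) \to \bar d_{\partial \Omega}(r(\mathbf{p}), r(\mathbf{q}))$ used inside the proof of Proposition \ref{propmercredi}.

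\textbf{Approximative midpoints and 4-points condition.} For the midpoint property, given $\epsilon > 0$, take $m^a$ the midpoint of the geodesic joining $p^a$ and $q^a$ in the CAT(0) space $S^{T_{cos}}_a$; then $z^a := r(m^a) \in \Sigma$ satisfies $d_\Sigma(x, z^a) \leq d^{T_{cos}}_a(p^a, m^a) = \tfrac{1}{2} d^{T_{cos}}_a(p^a, q^a)$ (and symmetrically with $y$), which is $\leq \tfrac{1}{2} d_\Sigma(x, y) + \epsilon$ for $a$ small. For the CAT(0) 4-points condition, lift $(x_1, y_1, x_2, y_2)$ to $(p_1^a, q_1^a, p_2^a, q_2^a) \in S^{T_{cos}}_a$, apply the condition in the CAT(0) space $S^{T_{cos}}_a$ to get a comparison quadruple $(\bar p_1^a, \bar q_1^a, \bar p_2^a, \bar q_2^a)$ in $\mathbb{R}^2$; normalize by placing $\bar p_1^a$ at the origin and $\bar q_1^a$ on a fixed ray. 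The convergence of all six pairwise $d^{T_{cos}}_a$-distances to the corresponding $d_\Sigma$-distances keeps the quadruples in a bounded region of $\mathbb{R}^2$, and a subsequential limit as $a \to 0$ yields the desired comparison quadruple for $(x_1, y_1, x_2, y_2)$ in $\Sigma$.

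\textbf{Main obstacle.} The principal technical difficulty is the upper asymptotic bound $\limsup_{a \to 0} d^{T_{cos}}_a(p^a, q^a) \leq d_\Sigma(x, y)$, since Bonsante's estimates only give the lower direction. Constructing curves on $S^{T_{cos}}_a$ that approximate the Horizon distance $d_\Sigma$ from above, uniformly in $a$ as $a \to 0$, is the substantive step; once this is in hand, extending from $\Sigma$ to its completion $\Sigma^{\bigstar}$, and invoking Proposition \ref{prop.cat000000}, is routine.
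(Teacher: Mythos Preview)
Your approach is essentially the same as the paper's: invoke Proposition~\ref{prop.cat000000}, and derive both the approximative midpoints property and the CAT(0) 4-points condition on $\Sigma$ by lifting to the CAT(0) levels $(S^{T_{cos}}_a,d^{T_{cos}}_a)$ and passing to the limit $a\to 0$. The paper's midpoint argument is organised slightly differently (it fixes the midpoint on a level $a_0$ and then sends a second parameter $a\le a_0$ to $0$ along the gradient line through that midpoint), but your version, projecting the midpoint by $r$, is equally valid and arguably cleaner.

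The one point worth flagging is your ``main obstacle''. The full convergence
\[
\lim_{a\to 0} d^{T_{cos}}_a(p^a,q^a)\;=\;d_\Sigma(x,y)
\]
is not something you need to establish from scratch: it is already proved by Bonsante (the paper cites \cite[Proposition~7.6, Proposition~7.8]{bonsante1}), and the paper simply invokes it. So your proposed construction of lifted curves to get the upper asymptotic bound is unnecessary extra work; once you replace that paragraph by the citation, your argument and the paper's coincide.
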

 
\begin{proof}
We are first going to proof that $(\Sigma^{\bigstar}, d^{\bigstar}_{\Sigma})$ possesses the approximative midpoints property. For that, it is sufficient to proof it for $(\Sigma,d_{\Sigma})$. 

Let $\mathbf{p}$, $\mathbf{q}$ two points of $X_{T_{cos}}$ the space of gradient lines of the cosmological time $T_{cos}$ and let $\epsilon>0$. For every $a>0$, denote by $p_{a}$ (respectively $q_{a}$) the intersection point of $\mathbf{p}$ and $S_{a}^{T_{cos}}$ (respectively the intersection point of $\mathbf{q}$ and $S_{a}^{T_{cos}}$). Since every $(S_{a}^{T_{cos}},d_{a}^{T_{cos}})$ is geodesic, it possesses the midpoints property. So for every $a>0$, let $z_{a}$ be the point in $S_{a}^{T_{cos}}$ such that $d_{a}(p_{a},z_{a})=d_{a}(q_{a},z_{a})=\frac{1}{2}d_{a}(p_{a},q_{a})$ . For every $a>0$, let us denote by  $\mathbf{z}_{a}$ the cosmological gradient line passing through $z_{a}$. 

By \cite[Proposition~7.6, Proposition~7.8]{bonsante1}, the distances $d_{a}^{T_{cos}}(p_{a},q_{a})$ converge, when $a$ goes to $0$, to $d_{\Sigma}(r(\mathbf{p}),r(\mathbf{q}))$. Then let,
\begin{itemize}
\item $a_{0}>0$ such that for every $0<a\leq a_{0}$ we have $\left|d_{\Sigma}(r(\mathbf{p}),r(\mathbf{q}))-d_{a}(p_{a},q_{a})\right|<\epsilon$;
\item $a_{1}>0$ so that for every $0<a\leq a_{1}$ we have $\left|d_{\Sigma}(r(\mathbf{p}),r(\mathbf{z}_{a_{0}}))-d_{a}(p_{a},\mathbf{z}_{a_{0}})\right|<\frac{\epsilon}{2}$.
\end{itemize}

For every $0<a<\min(a_{0},a_{1})$ we have, $$d_{\Sigma}(r(\mathbf{p}),r(\mathbf{z}_{a_{0}}))\leq d_{a}(p_{a},\mathbf{z}_{a_{0}})+\frac{\epsilon}{2}.$$
But $d_{a}(p_{a},\mathbf{z}_{a_{0}})\leq d_{a_{0}}(p_{a_{0}},z_{a_{0}})$, for $0<a<\min(a_{0},a_{1})$. Hence  $$d_{\Sigma}(r(\mathbf{p}),r(\mathbf{z}_{a_{0}}))\leq \frac{1}{2}d_{a_{0}}(p_{a_{0}},q_{a_{0}})+\frac{\epsilon}{2}\leq \frac{1}{2}d_{\Sigma}(r(\mathbf{p}),r(\mathbf{q}))+\epsilon.$$
In the same way we show that $$d_{\Sigma}(r(\mathbf{q}),r(\mathbf{z}_{a_{0}}))\leq \frac{1}{2}d_{\Sigma}(r(\mathbf{p}),r(\mathbf{q}))+\epsilon.$$ We obtain in this way an $\epsilon$-approximative midpoint $r(\mathbf{z}_{a_{0}})$.

By \cite[Proposition~7.6, Proposition~7.8]{bonsante1}, the $CAT(0)$ metric spaces $(S_{a}^{T_{cos}},d_{a}^{T_{cos}})$ converge in the compact open topology to $(\Sigma,d_{\Sigma})$. Thus the metric spaces $(\Sigma,d_{\Sigma})$ and $(\Sigma^{\bigstar}, d^{\bigstar}_{\Sigma})$ satisfy the $CAT(0)$ $4$-points condition. As $(\Sigma^{\bigstar}, d^{\bigstar}_{\Sigma})$ is complete, by Proposition \ref{prop.cat000000} it is $CAT(0)$.
\end{proof}

\subsection{Asymptotic convergence in the past}

In this part we will prove the last point of  Theorem \ref{mehditheorem2}. Let $M\simeq \Omega/\Gamma$ be a future complete $MGHC$ flat non elementary space-time of dimension $n+1$, where $\Omega$ is a future complete regular domain and $\Gamma$ a discrete subgroup of $SO^{+}(1,n)\ltimes \mathbb{R}^{1,n}$. 

Let $T_{cos}$ be the cosmological time of $\Omega$ and let and $T$ be a quasi-concave $\Gamma$-invariant Cauchy time of $\Omega$. Denote respectively by $X_{T_{cos}}$, $X_{T}$ the space of gradient lines of $T_{cos}$ and the space of gradient lines of $T$. The gradient lines of $T_{cos}$ (respectively $T$) being inextensible temporal curves, they intersect every level set of  $T_{cos}$ (respectively every level set of $T$), which are Cauchy hypersurfaces, at a unique point. It follows that every level set of $T_{cos}$ and every level set of $T$ is identified with the space $X_{T_{cos}}$ and the space $X_{T}$ respectively. Denote by  $d^{T_{cos}}_{a}$ (respectively $\delta^{T_{cos}}_{a}$) the distance of $S^{T_{cos}}_{a}$ transported on $X_{T_{cos}}$ (respectively on $X_{T}$). In the same way we define the distances $d^{T}_{a}$ on $X_{T}$ and $\delta^{T}_{a}$ on $X_{T_{cos}}$. Since the Cauchy hypersurfaces are homeomorphic one to each other, the distances $d^{T_{cos}}_{a}$ and $\delta^{T}_{a}$ (respectively $d^{T}_{a}$ and $\delta^{T_{cos}}_{a}$) define the same topology on $X_{T_{cos}}$ (respectively on $X_{T}$).

The three following results were proved in \cite{mehdi1} (see for instance \cite[Remark~1.2]{mehdi1}).

\begin{prop}
\label{propmehdi77}
The distances $d^{T}_{a}$ defined on $X_{T}$ converge in the compact open topology to a pseudo-distance $d^{T}_{0}$.
\end{prop}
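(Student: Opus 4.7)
The plan is to establish monotonicity of $a \mapsto d^T_a(x,y)$, extract a pointwise limit $d^T_0$ which is automatically a pseudo-distance, and then upgrade this pointwise convergence to compact open convergence via equicontinuity.

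First I would exploit the $\xi_T$-flow $\Phi^t_T$, which sends $S^T_a$ diffeomorphically onto $S^T_b$ and which is precisely the identification used to view every $d^T_a$ as a distance on the common space $X_T$. Proposition \ref{propositionmehdi1} asserts that a spacelike curve contained in the past of $S^T_1$ is shorter than its $\Phi_T$-projection onto $S^T_1$; by rerunning the same argument with the level $S^T_b$ replacing $S^T_1$ (or simply by rescaling $T$), the analogous inequality holds for arbitrary $b > 0$. Given $a < b$ and $x,y \in X_T$, pulling back the geodesic of $(S^T_b, g^T_b)$ joining $x_b, y_b$ to $S^T_a$ by $\Phi^{a-b}_T$ produces a spacelike curve from $x_a$ to $y_a$ of length at most $d^T_b(x,y)$. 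Hence
\[
d^T_a(x,y) \le d^T_b(x,y) \qquad (a \le b),
\]
so the net $(d^T_a)_{a>0}$ is monotone non-decreasing in $a$, and the pointwise limit $d^T_0(x,y) := \lim_{a \to 0^+} d^T_a(x,y) \in [0, +\infty)$ exists for all $x,y \in X_T$. Symmetry, non-negativity, and the triangle inequality pass to $d^T_0$ from the $d^T_a$, making $d^T_0$ a pseudo-distance. It need not separate points, as distinct gradient lines may converge to the same point of the initial singularity --- this is precisely the degeneration the convergence is meant to capture.

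To upgrade pointwise convergence to compact open convergence I would combine the monotonicity with the triangle inequality to obtain, for every $0 < a \le 1$,
\[
|d^T_a(x,y) - d^T_a(x',y')| \le d^T_a(x,x') + d^T_a(y,y') \le d^T_1(x,x') + d^T_1(y,y').
\]
This bounds the oscillation of every $d^T_a$ uniformly in $a$ by the fixed continuous function $d^T_1$, so the family $\{d^T_a\}_{0 < a \le 1}$ is equicontinuous for the canonical topology of $X_T \times X_T$ (which coincides with the topology induced by $d^T_1$). A standard Arzel\`a--Ascoli style argument then turns pointwise convergence of an equicontinuous family on a compact set into uniform convergence, so $d^T_a \to d^T_0$ in the compact open topology; in passing, $d^T_0$ inherits continuity.

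The one point requiring genuine work is extending Proposition \ref{propositionmehdi1} from $S^T_1$ to an arbitrary level $S^T_b$; once that is in hand, the monotonicity plus equicontinuity argument above is essentially formal. The remark in the text noting that the proof of \cite[Proposition~4.2]{mehdi1} is insensitive to dimension and to the ambient curvature (it uses only convexity of the levels and the structure of the $\xi_T$-flow) makes this extension straightforward, reducing either to a verbatim repetition with $b$ in place of $1$, or to a harmless reparameterization $T \leadsto T/b$.
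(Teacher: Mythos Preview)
Your argument is correct. The monotonicity $d^T_a \le d^T_b$ for $a\le b$ follows exactly as you say from Proposition~\ref{propositionmehdi1} (applied with $S^T_b$ in place of $S^T_1$, which is harmless since nothing in that proposition singles out the level $1$), and the equicontinuity bound $|d^T_a(x,y)-d^T_a(x',y')|\le d^T_1(x,x')+d^T_1(y,y')$ then upgrades pointwise convergence to uniform convergence on compacts.

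Note, however, that the paper does not actually prove Proposition~\ref{propmehdi77}: it simply cites \cite{mehdi1}, stating that this and the two results following it were established there (with the remark that the arguments carry over unchanged to the present setting). So there is no proof in the paper to compare against. Your approach---monotonicity from the expansive character of the $\xi_T$-flow, then equicontinuity---is exactly the natural one suggested by the surrounding material, and is almost certainly what the cited reference does as well.
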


In the case of the cosmological time, the cleaning of the pseudo-metric space $(X_{T_{cos}}, d^{T_{cos}}_{0})$ is isometric to the Initial Singularity $(\Sigma,d_{\Sigma})$. 

\begin{prop}
\label{bel1}
Up to a subsequence, the sequence  $(\delta^{T_{cos}}_{a_{n}})_{n\geq 0}$ (respectively  $(\delta^{T}_{a_{n}})_{n\geq 0}$) converge in the compact open topology to a pseudo-distance $\delta^{T_{cos}}_{0}$ (respectively $\delta^{T}_{0}$) when $n$ goes to $\infty$. Moreover, $$\delta^{T_{cos}}_{0}\leq d^{T}_{0};$$
$$\delta^{T}_{0}\leq d^{T_{cos}}_{0}.$$
\end{prop}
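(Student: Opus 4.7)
My plan has two components: extracting a subsequential compact-open limit of $\{\delta^{T_{cos}}_a\}$ on $X_T$ (and dually $\{\delta^T_a\}$ on $X_{T_{cos}}$) by Arzela-Ascoli, and passing a pointwise comparison to the limit using the expansive character of quasi-concave times (Proposition~\ref{propositionmehdi1}). Both ingredients arise from the same pathwise construction, so I describe the comparison first.

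For the inequality $\delta^{T_{cos}}_0 \leq d^T_0$ on $X_T$ (the dual inequality $\delta^T_0 \leq d^{T_{cos}}_0$ on $X_{T_{cos}}$ being strictly analogous, using that in the flat case $T_{cos}$ is concave so Proposition~\ref{propositionmehdi1} applies to $T_{cos}$ verbatim), fix $x, y \in X_T$. For each small $b > 0$, let $\gamma_b$ be a minimizing geodesic of $S^T_b$ between $x \cap S^T_b$ and $y \cap S^T_b$, of length $d^T_b(x,y)$, and set $c_b := \inf_{\gamma_b} T_{cos}$, so that $\gamma_b$ lies in the future of $S^{T_{cos}}_{c_b}$. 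Applying Proposition~\ref{propositionmehdi1} to $T_{cos}$, the projection $\tilde\gamma_b$ of $\gamma_b$ onto $S^{T_{cos}}_{c_b}$ along the cosmological flow $\Phi_{T_{cos}}$ has length at most $d^T_b(x,y)$. However, the endpoints $p'_b, q'_b$ of $\tilde\gamma_b$ are the $\Phi_{T_{cos}}$-images of $x \cap S^T_b$ and $y \cap S^T_b$, which in general differ from $x \cap S^{T_{cos}}_{c_b}$ and $y \cap S^{T_{cos}}_{c_b}$ because the $T$- and $T_{cos}$-gradient lines through a common point do not coincide.

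To control this endpoint defect, I observe that as $b \to 0$ both $p'_b$ and $x \cap S^{T_{cos}}_{c_b}$ converge in $\Omega \cup \Sigma$ to the same past limit $x_0 \in \Sigma$ of the inextensible timelike curve $x$ (and similarly at the $y$-endpoint). Invoking the convergence $d^{T_{cos}}_a \to d_\Sigma \circ (r,r)$ as $a \to 0$ established in \cite[Proposition~7.6, Proposition~7.8]{bonsante1} and already used in the $\operatorname{CAT}(0)$ proof above, the $d^{T_{cos}}_{c_b}$-distance between $p'_b$ and $x \cap S^{T_{cos}}_{c_b}$ tends to $d_\Sigma(x_0, x_0) = 0$. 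The triangle inequality on $S^{T_{cos}}_{c_b}$ then yields
\[
\delta^{T_{cos}}_{c_b}(x,y) \leq d^T_b(x,y) + o_{b \to 0}(1).
\]
Combined with Proposition~\ref{propmehdi77} this provides a pointwise uniform upper bound $\delta^{T_{cos}}_a \leq d^T_0 + o(1)$ valid on $X_T \times X_T$ for $a$ small, from which the triangle inequality for pseudo-distances deduces the equicontinuity on compacts needed by Arzela-Ascoli. Extracting a subsequence $a_n \to 0$ along which $\delta^{T_{cos}}_{a_n}$ converges uniformly on compacts to a pseudo-distance $\delta^{T_{cos}}_0$, and choosing $b_n \to 0$ with $c_{b_n} = a_n$ (possible since $b \mapsto c_b$ is continuous with $c_b \to 0$ as $b \to 0$), passage to the limit in the displayed estimate yields $\delta^{T_{cos}}_0 \leq d^T_0$.

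The main obstacle is precisely this endpoint control: establishing that the two gradient flows on $\widetilde{M}$ become asymptotically indistinguishable upon approach to the initial singularity. It rests on Bonsante's identification of the limiting cosmological geometry with the initial singularity metric and on the continuity of the retraction $r$, rather than on any direct pointwise comparison between $T$- and $T_{cos}$-gradient lines at finite distance from $\Sigma$.
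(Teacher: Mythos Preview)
Your core step contains a genuine error. You take the geodesic $\gamma_b$ on $S^T_b$ and project it onto $S^{T_{cos}}_{c_b}$ along the \emph{cosmological} flow $\Phi_{T_{cos}}$, claiming that Proposition~\ref{propositionmehdi1} (applied to $T_{cos}$) forces $L(\tilde\gamma_b)\leq L(\gamma_b)$. But Proposition~\ref{propositionmehdi1} only asserts that projecting a curve \emph{in the past} of a $T_{cos}$-level \emph{forward} onto that level increases length; equivalently, flowing \emph{back} from a single $T_{cos}$-level decreases length. Your curve $\gamma_b$ lies on $S^T_b$, not on a single cosmological level, so it is \emph{not} the $\Phi_{T_{cos}}$-projection of $\tilde\gamma_b$ onto any fixed level, and the proposition does not apply. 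A direct computation in the decomposition $p=r(p)+T_{cos}(p)N_p$ gives only
\[
|\dot{\tilde\gamma}_b|^2=|\dot r+c_b\dot N|^2\leq |\dot r+T_{cos}\dot N|^2=|\dot\gamma_b|^2+\dot T_{cos}^2,
\]
and the extra $\dot T_{cos}^2$ term (present precisely because $\gamma_b$ is transverse to the cosmological foliation) prevents the desired inequality. So the length comparison that drives your whole argument is unjustified, and the subsequent endpoint-defect machinery, even if it could be made rigorous (your appeal to $d^{T_{cos}}_a\to d_\Sigma\circ(r,r)$ concerns fixed cosmological gradient lines, not the varying points $p'_b$), would be built on a false premise.

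The paper's approach (carried out explicitly in the proof of the proposition immediately following Proposition~\ref{bel1}, and originating in \cite{mehdi1}) avoids all of this by projecting along the \emph{correct} flow. Choose $c$ so that $S^{T_{cos}}_{c}$ lies entirely in the past of $S^T_b$ (e.g.\ $c=\inf_{S^T_b}T_{cos}$), take the geodesic $\gamma$ on $S^T_b$ between $x\cap S^T_b$ and $y\cap S^T_b$, and push it down to $S^{T_{cos}}_{c}$ along $\Phi_T$. Since $x,y\in X_T$ are themselves $T$-gradient lines, the endpoints of the pushed curve are \emph{exactly} $x\cap S^{T_{cos}}_{c}$ and $y\cap S^{T_{cos}}_{c}$, with no defect; and Proposition~\ref{propositionmehdi1} applied to $T$ (the pushed curve is in the past of $S^T_b$ and its $\Phi_T$-projection back up is $\gamma$) gives length $\leq d^T_b(x,y)$. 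Hence $\delta^{T_{cos}}_{c}(x,y)\leq d^T_b(x,y)$ directly, and the equicontinuity and limiting inequality follow. The moral: match the flow to the space of gradient lines you are working on.
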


\begin{cor}
\label{bel3}
The marked spectrum of $d^{T_{cos}}_{a}$, $d^{T}_{a}$, $\delta^{T_{cos}}_{0}$ and $\delta^{T}_{0}$ are two by two equals.
\end{cor}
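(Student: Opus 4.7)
The argument rests on three observations. First, for every fixed $a>0$ the $T_{cos}$-flow identifies $X_{T_{cos}}$ with the Cauchy hypersurface $S^{T}_{a}$ in a $\Gamma$-equivariant way: $\mathbf{p}\mapsto \mathbf{p}\cap S^{T}_{a}$ is a $\Gamma$-equivariant bijection which, by the very definition of $\delta^{T}_{a}$, carries $\delta^{T}_{a}$ to $d^{T}_{a}$. Composing with the analogous bijection $X_{T}\to S^{T}_{a}$ coming from the $T$-flow yields a $\Gamma$-equivariant isometry $(X_{T_{cos}},\delta^{T}_{a})\to(X_{T},d^{T}_{a})$; taking translation lengths this gives $l^{\delta^{T}_{a}}(\gamma)=l^{d^{T}_{a}}(\gamma)$ for every $\gamma\in\Gamma$, and the symmetric argument produces $l^{\delta^{T_{cos}}_{a}}(\gamma)=l^{d^{T_{cos}}_{a}}(\gamma)$.

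Second, I would pass to the limit $a\to 0$ along the subsequence of Proposition~\ref{bel1}, showing that the marked spectrum is continuous under compact-open convergence of $\Gamma$-invariant pseudo-distances when the action is properly cocompact. Pointwise convergence already gives $\limsup l^{\delta^{T}_{a_n}}(\gamma)\le l^{\delta^{T}_{0}}(\gamma)$; for the reverse inequality I would translate near-minimizers $\mathbf{p}_{n}$ of $\delta^{T}_{a_{n}}(\cdot,\gamma\cdot)$ into a compact fundamental domain $F\subset X_{T_{cos}}$ by some $\delta_{n}\in\Gamma$. By $\Gamma$-invariance, $\delta^{T}_{a_{n}}(\mathbf{p}_{n},\gamma\mathbf{p}_{n})=\delta^{T}_{a_{n}}(\delta_{n}^{-1}\mathbf{p}_{n},(\delta_{n}^{-1}\gamma\delta_{n})\cdot\delta_{n}^{-1}\mathbf{p}_{n})$, and properness of the $\Gamma$-action confines the conjugates $\delta_{n}^{-1}\gamma\delta_{n}$ to a finite set once the displacement is bounded. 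Extracting a subsequence one obtains a limit $\mathbf{p}^{*}\in F$ and a fixed conjugate $\eta$ with $\delta^{T}_{0}(\mathbf{p}^{*},\eta\mathbf{p}^{*})\le\liminf l^{\delta^{T}_{a_{n}}}(\gamma)$, and conjugation invariance of the translation length ($l^{\delta^{T}_{0}}(\eta)=l^{\delta^{T}_{0}}(\gamma)$) closes the bound. Applying the same argument to $d^{T}_{a_n}\to d^{T}_{0}$ and combining with the first observation yields $l^{\delta^{T}_{0}}(\gamma)=l^{d^{T}_{0}}(\gamma)$, and analogously $l^{\delta^{T_{cos}}_{0}}(\gamma)=l^{d^{T_{cos}}_{0}}(\gamma)$.

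Third, I would invoke the inequalities $\delta^{T_{cos}}_{0}\le d^{T}_{0}$ and $\delta^{T}_{0}\le d^{T_{cos}}_{0}$ of Proposition~\ref{bel1}, which give $l^{\delta^{T_{cos}}_{0}}(\gamma)\le l^{d^{T}_{0}}(\gamma)$ and $l^{\delta^{T}_{0}}(\gamma)\le l^{d^{T_{cos}}_{0}}(\gamma)$. Substituting the two equalities obtained in the previous step squeezes $l^{d^{T_{cos}}_{0}}(\gamma)\le l^{d^{T}_{0}}(\gamma)\le l^{d^{T_{cos}}_{0}}(\gamma)$, so all four marked spectra coincide. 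The delicate technical point is the compact-open continuity of the marked spectrum in the second step: the infimum defining $l$ is taken over a noncompact space and fails to be continuous under compact-open convergence in general, but cocompactness of the $\pi_{1}(M)$-action combined with conjugation invariance of the translation length traps near-minimizers in a fundamental domain and makes the limit pass through.
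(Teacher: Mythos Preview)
The paper does not actually prove this corollary here: it is one of the ``three following results'' explicitly attributed to \cite{mehdi1}, so there is no in-text proof to compare against. Your three-step strategy (equivariant isometry at each level, passage to the limit, then the squeeze via Proposition~\ref{bel1}) is the natural one and is presumably close in spirit to the cited argument.

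There is, however, a genuine gap in your Step~2. You claim that once the displacement $\delta^{T}_{a_n}(\mathbf{p}'_n,\eta_n\mathbf{p}'_n)$ is bounded, properness of the $\Gamma$-action forces the conjugates $\eta_n$ into a finite set. But the metrics $\delta^{T}_{a_n}$ degenerate as $a_n\to 0$: balls of fixed radius in $\delta^{T}_{a_n}$ need not stay inside any fixed compact of $X_{T_{cos}}$, so a bound on $\delta^{T}_{a_n}(\mathbf{p}'_n,\eta_n\mathbf{p}'_n)$ does not, by itself, trap $\eta_n\mathbf{p}'_n$ in a compact set, and properness cannot be invoked. Cocompactness alone does not rescue this, because compact-open convergence gives no uniform lower bound for $\delta^{T}_{a_n}$ in terms of a fixed proper distance.

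The repair is to exploit the monotonicity coming from Proposition~\ref{propositionmehdi1}: since the $T$-flow is length-nondecreasing toward the future, the family $a\mapsto d^{T}_{a}$ on $X_{T}$ is pointwise nondecreasing, hence $d^{T}_{a_n}\ge d^{T}_{0}$ for all $n$, which immediately gives $l^{d^{T}_{a_n}}(\gamma)\ge l^{d^{T}_{0}}(\gamma)$ and therefore $\lim l^{d^{T}_{a_n}}(\gamma)=l^{d^{T}_{0}}(\gamma)$ (the $\limsup$ direction being elementary). Combined with your Step~1 this yields $\lim l^{\delta^{T}_{a_n}}(\gamma)=l^{d^{T}_{0}}(\gamma)$. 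Now only the trivial $\limsup$ direction is needed for $\delta^{T}_{0}$: from $\limsup l^{\delta^{T}_{a_n}}\le l^{\delta^{T}_{0}}$ and $\delta^{T}_{0}\le d^{T_{cos}}_{0}$ one gets $l^{d^{T}_{0}}\le l^{\delta^{T}_{0}}\le l^{d^{T_{cos}}_{0}}$, and the symmetric chain $l^{d^{T_{cos}}_{0}}\le l^{\delta^{T_{cos}}_{0}}\le l^{d^{T}_{0}}$ closes the loop. This avoids the delicate point entirely.
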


The next proposition gives a more precise description of the behavior of the distances $\delta^{T}_{a}$ near the initial singularity.

\begin{prop}
The distances $\delta^{T}_{a}$, converge in the compact open topology to the pseudo-distance $d^{T_{cos}}_{0}$.
\end{prop}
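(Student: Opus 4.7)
The goal is to prove $\delta^T_a \to d^{T_{cos}}_0$ in the compact-open topology. My plan is to establish the two pointwise inequalities $\limsup_{a\to 0}\delta^T_a(\mathbf{p},\mathbf{q})\le d^{T_{cos}}_0(\mathbf{p},\mathbf{q})$ and $\liminf_{a\to 0}\delta^T_a(\mathbf{p},\mathbf{q})\ge d^{T_{cos}}_0(\mathbf{p},\mathbf{q})$ for every $\mathbf{p},\mathbf{q}\in X_{T_{cos}}$, then upgrade to compact-open convergence by the equicontinuity already used in Proposition \ref{bel1}.

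The upper bound follows directly from Proposition \ref{prop.mehdi456677}. Setting $p_a:=\mathbf{p}\cap S^T_a$, $q_a:=\mathbf{q}\cap S^T_a$, and $a'_a:=\sup_{S^T_a}T_{cos}$, the cosmological flow furnishes a map $\Psi_a:S^T_a\to S^{T_{cos}}_{a'_a}$ with $g^T_a\le \Psi_a^* g^{T_{cos}}_{a'_a}$, so pulling back the $d^{T_{cos}}_{a'_a}$-geodesic joining $\mathbf{p}$ and $\mathbf{q}$ to a curve on $S^T_a$ from $p_a$ to $q_a$ gives $\delta^T_a(\mathbf{p},\mathbf{q})\le d^{T_{cos}}_{a'_a}(\mathbf{p},\mathbf{q})$. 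Since the $T$-levels crush to the initial singularity, $a'_a\to 0$ as $a\to 0$, and the monotonicity $c\mapsto d^{T_{cos}}_c$ supplied by Proposition \ref{bonsanteporputilis�preuve} gives $d^{T_{cos}}_{a'_a}\to d^{T_{cos}}_0$, yielding the desired $\limsup$ bound.

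For the lower bound I would let $\alpha_a$ be the $d^T_a$-geodesic on $S^T_a$ from $p_a$ to $q_a$ and, for $b\le b'_a:=\inf_{S^T_a}T_{cos}$, consider its cosmological projection $\alpha_a^b(s):=r(\alpha_a(s))+b\,N(\alpha_a(s))$ on $S^{T_{cos}}_b$, which joins $\mathbf{p}\cap S^{T_{cos}}_b$ and $\mathbf{q}\cap S^{T_{cos}}_b$. The identity $|\dot{\alpha}_a|^2=|\dot r+T_{cos}\dot N|^2-\dot T_{cos}^2$ (extracted from the proof of Proposition \ref{prop.mehdi456677}), together with the bound $|\dot T_{cos}|^2\le (K_a^2-1)|\dot{\alpha}_a|^2$ derived from the decomposition $N=-\langle N,\mathbf{n}\rangle\mathbf{n}+v$ with $|v|^2=\langle N,\mathbf{n}\rangle^2-1$ and Proposition \ref{propositionmehdi3} (where $K_a:=a'_a/b'_a$), and the monotonicity $|\dot r+b\dot N|^2\le|\dot r+T_{cos}\dot N|^2$ for $b\le T_{cos}$, produces $L_{S^{T_{cos}}_b}(\alpha_a^b)\le K_a\,\delta^T_a(\mathbf{p},\mathbf{q})$. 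Since $L_{S^{T_{cos}}_b}(\alpha_a^b)\ge d^{T_{cos}}_b(\mathbf{p},\mathbf{q})$ and $d^{T_{cos}}_b\to d^{T_{cos}}_0$ as $b\to 0$, I obtain $\delta^T_a\ge K_a^{-1}\,d^{T_{cos}}_0$.

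The main obstacle is that $K_a$ need not tend to $1$ as $a\to 0$, so the previous estimate does not yet match the upper bound. To close the gap, I would combine it with Proposition \ref{bel1}: extracting a subsequence $a_n\to 0$ along which $\delta^T_{a_n}$ converges compactly to a pseudo-distance $\delta^T_0$, one has simultaneously $\delta^T_0\le d^{T_{cos}}_0$ (Proposition \ref{bel1}) and equality of marked length spectra (Corollary \ref{bel3}). Using the geometry established in the first three items of Theorem \ref{mehditheorem2}, namely that the cleaning of $d^{T_{cos}}_0$ is isometric to $(\Sigma,d_\Sigma)$ which embeds in the $CAT(0)$ completion $(\Sigma^{\bigstar},d^{\bigstar}_\Sigma)$, a marked-length-spectrum rigidity argument in this $CAT(0)$ setting forces $\delta^T_0=d^{T_{cos}}_0$ for every subsequential limit, which promotes subsequential convergence into convergence of the full family.
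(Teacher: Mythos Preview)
Your upper bound is fine and essentially reproduces what Proposition \ref{bel1} already gives, so the content of the proof lies entirely in the lower bound $\liminf_{a\to 0}\delta^T_a\ge d^{T_{cos}}_0$. Here your argument has a genuine gap. You correctly observe that the bi-Lipschitz estimate only yields $\delta^T_a\ge K_a^{-1}d^{T_{cos}}_0$ with a constant $K_a=\sup_{S^T_a}T_{cos}/\inf_{S^T_a}T_{cos}$ that need not approach $1$ as $a\to 0$, and you then try to close the gap by invoking a ``marked-length-spectrum rigidity argument in this $CAT(0)$ setting''. No such rigidity theorem is available: marked length spectrum rigidity is not known (and is not expected to hold) for general cocompact actions on $CAT(0)$ spaces, and here one of the two pseudo-distances, $\delta^T_0$, is not even known to be a length pseudo-distance, let alone $CAT(0)$. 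Corollary \ref{bel3} gives equality of spectra, but equality of spectra together with the domination $\delta^T_0\le d^{T_{cos}}_0$ does not force $\delta^T_0=d^{T_{cos}}_0$ without additional structure that you have not established.

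The paper avoids this issue entirely by a direct estimate based on Proposition \ref{propositionmehdi1} rather than on the bi-Lipschitz constant. The key idea you are missing is to project along the $T$-flow (not the cosmological flow): take the $d^T_{a_i}$-geodesic on $S^T_{a_i}$ between $p_i=\mathbf{p}\cap S^T_{a_i}$ and $q_i=\mathbf{q}\cap S^T_{a_i}$ and push it \emph{backward} along the gradient lines of $T$ to a cosmological level $S^{T_{cos}}_{f(a_i)}$ lying in its past. Proposition \ref{propositionmehdi1} says this backward projection is length non-increasing, so one obtains a curve on $S^{T_{cos}}_{f(a_i)}$ of length at most $\delta^T_{a_i}(\mathbf{p},\mathbf{q})$, joining the points $x_{f(a_i)},y_{f(a_i)}$ where the $T$-gradient lines through $p_i,q_i$ meet $S^{T_{cos}}_{f(a_i)}$. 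The discrepancy between these endpoints and $\mathbf{p}\cap S^{T_{cos}}_{f(a_i)}$, $\mathbf{q}\cap S^{T_{cos}}_{f(a_i)}$ is then an additive error bounded by Euclidean distances in $\mathbb{R}^{n+1}$ that tend to $0$ because everything collapses to $r(\mathbf{p})$, $r(\mathbf{q})$. This yields $d^{T_{cos}}_0\le\delta^T_0$ with no multiplicative loss, which is exactly what your approach cannot produce.
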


\begin{proof}
By Proposition \ref{bel1}, it is sufficient to proof that every compact-open  limit point  $\delta^{T}_{0}$ of $(\delta^{T}_{a})_{a>0}$ verifies $\delta^{T}_{0}\geq d^{T_{cos}}_{0}$. 

Let $(\delta^{T}_{a_{i}})_{i\in \mathbb{N}}$  a subsequence of $(\delta^{T}_{a})_{a>0}$ converging to $\delta^{T}_{0}$. Let $\mathbf{p}$ and $\mathbf{q}$ in $X_{T_{cos}}$. For every $i\in \mathbb{N}$, denote respectively by $p_{i}$, $q_{i}$ the intersection points of $S^{T}_{a_{i}}$ and $\mathbf{p}$, $\mathbf{q}$. Note that $J^{-}(p_{i})\cap \overline{\Omega}$  (respectively $J^{-}(q_{i})\cap \overline{\Omega}$ ) is a decreasing sequence of compacts which converge to $r(\mathbf{p})$ (respectively $r(\mathbf{q})$).

Let $i\in \mathbb{N}$, there exists $f(a_{i})$ such that the hypersurface $S^{T_{cos}}_{f(a_{i})}$ is in the past of the hypersurface $S^{T}_{a_{i}}$. Denote respectively by $\mathbf{x_{i}}$, $\mathbf{y_{i}}$ the gradient lines of $T$ passing through the points $p_{i}$, $q_{i}$ of $S^{T}_{a_{i}}$. Let us denote again by $x_{f(a_{i})}$, $y_{f(a_{i})}$ respectively the intersection points of $\mathbf{x_{i}}$ and $\mathbf{y_{i}}$ with $S^{T_{cos}}_{f(a_{i})}$. We get then: $$d^{T_{cos}}_{f(a_{i})}(\mathbf{p},\mathbf{q})\leq \delta^{T_{cos}}_{f(a_{i})}(\mathbf{x_{i}},\mathbf{y_{i}})+d^{T_{cos}}_{f(a_{i})}(\mathbf{p},x_{i})+d^{T_{cos}}_{f(a_{i})}(\mathbf{q},y_{i}).$$
But by Proposition  \ref{propositionmehdi1}, we have, $$\delta^{T_{cos}}_{f(a_{i})}(\mathbf{x_{i}},\mathbf{y_{i}})\leq d^{T}_{a_{i}}(\mathbf{x_{i}},\mathbf{y_{i}})=\delta^{T}_{a_{i}}(\mathbf{p},\mathbf{q})$$ 
Hence $$d^{T_{cos}}_{f(a_{i})}(\mathbf{p},\mathbf{q})\leq \delta^{T}_{a_{i}}(\mathbf{p},\mathbf{q})+d^{T_{cos}}_{f(a_{i})}(\mathbf{p},x_{i})+d^{T_{cos}}_{f(a_{i})}(\mathbf{q},y_{i}).$$
On the one hand we have that $d^{T_{cos}}_{f(a_{i})}(\mathbf{p},x_{i})$ (respectively $d^{T_{cos}}_{f(a_{i})}(\mathbf{q},y_{i})$) is bounded from above by $\left|\left|p_{f(a_{i})}-x_{i}\right|\right|$ (respectively $\left|\left|q_{f(a_{i})}-y_{i}\right|\right|$), where $\left|\left|.\right|\right|$ is the euclidean norm of $\mathbb{R}^{n+1}$. 

But $x_{i}$, $p_{f(a_{i})}$ (respectively $y_{i}$, $q_{f(a_{i})}$) converge when $i$ goes ton $\infty$ to the same point which is $r(\mathbf{p})$ (respectively $r(\mathbf{q})$). This proves that $d^{T_{cos}}_{f(a_{i})}(\mathbf{p},x_{i})$ and $d^{T_{cos}}_{f(a_{i})}(\mathbf{q},y_{i})$ converge to $0$ when $i$ goes toward $\infty$.

On the other hand, the distances $d^{T_{cos}}_{f(a_{i})}$ and $\delta^{T}_{a_{i}}$ converge respectively, when $i$ goes to $\infty$, to $d^{T_{cos}}_{0}$ and $\delta^{T}_{0}$. Thus we have $$d^{T_{cos}}_{0}\leq \delta^{T}_{0}$$ and hence $d^{T_{cos}}_{0}=\delta^{T}_{0}$.
\end{proof}

This proposition proves that the $\Gamma$-metric spaces $(\Gamma,S^{T}_{a},d^{T}_{a})_{a>0}$ converge in the compact open topology, when $a$ goes to $0$, to the initial singularity $(\Gamma,\Sigma,d_{\Sigma})$. Thus the $\Gamma$-metric spaces $(\Gamma,S^{T}_{a},d^{T}_{a})_{a>0}$ converge in the Gromov equivariant topology, when $a$ goes to $0$ to the initial singularity $(\Gamma,\Sigma,d_{\Sigma})$ and hence to its completion $(\Sigma^{\bigstar}, d^{\bigstar}_{\Sigma})$. 

\subsection{Asymptotic convergence in the future}
The object of this part is to proof Theorem \ref{mehditheorem3}. We use the same notation of the previous part. 
Let $T:\Omega\rightarrow\mathbb{R}_{+}$ be a quasi-concave $\Gamma$-invariant Cauchy time. 
\begin{prop}
There exists a constant $C>0$ (depending only on $\Gamma$) such that:
\begin{itemize}
\item 1) for every $C'>C$ , the renormalized distances $\frac{\delta^{T}_{a}}{\sup_{S^{T}_{a}}T_{cos}}$ are, near the infinity, $C'$-quasi-isometric to the hyperbolic metric $d_{\mathbb{H}^{n}}$. In particular, the limit points, for the compact open topology, of the family $(\frac{\delta^{T}_{a}}{\sup_{S^{T}_{a}}T_{cos}})_{a}$ are all $C$-bi-Lipschitz to $d_{\mathbb{H}^{n}}$;
\item 2) In the $2+1$ case, the renormalized $CMC$ distances (respectively $k$ distances) converge for the compact open topology, when times goes to infinity, to the hyperbolic distance $d_{\mathbb{H}^{2}}$.
\end{itemize}
\end{prop}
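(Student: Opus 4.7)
The strategy is to factor the comparison between $\delta^T_a$ and $d_{\mathbb{H}^n}$ through the cosmological level $S^{T_{cos}}_{b(a)}$, where $b(a) := \sup_{S^T_a} T_{cos}$. Proposition 4.4 shows that the cosmological flow induces a $\Gamma$-equivariant $K(a)^4$-bi-Lipschitz identification between $(S^T_a, g^T_a)$ and $(S^{T_{cos}}_{b(a)}, g^{T_{cos}}_{b(a)})$, where $K(a) := \sup_{S^T_a} T_{cos}/\inf_{S^T_a} T_{cos}$. By the corollary of Proposition 3.2, $\limsup_{a\to\infty} K(a) \leq C_0$ with $C_0$ depending only on $\Gamma$. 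Consequently it suffices to understand the renormalized cosmological levels as $b\to\infty$.

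The key step is to prove: the $\Gamma$-metric spaces $(\Gamma, S^{T_{cos}}_b, b^{-1} d^{T_{cos}}_b)$ converge, in the compact-open topology, to $(\Gamma, \mathbb{H}^n, d_{\mathbb{H}^n})$ as $b\to\infty$, the identification being realized by the cosmological normal map $N\colon S^{T_{cos}}_b\to\mathbb{H}^n$ (which is $\Gamma$-equivariant via the linear part $L$ of the holonomy). Every $p\in \Omega$ decomposes as $p = r(p) + T_{cos}(p) N_p$, so for a Lipschitz curve $\alpha$ in $S^{T_{cos}}_b$ we have $\dot\alpha = \dot r + b\,\dot N$, giving
$$|\dot\alpha|^2 \;=\; |\dot r|^2 \;+\; 2b\,\langle \dot r, \dot N\rangle \;+\; b^2\,|\dot N|^2_{\mathbb{H}^n}.$$
Since $\Gamma_\tau$ acts cocompactly on $\Sigma$ and $\partial\Omega$ is a $1$-Lipschitz graph, on a fundamental domain of $\Gamma_\tau$ in $S^{T_{cos}}_b$ the velocity $|\dot r|$ is bounded independently of $b$. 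Dividing by $b^2$, the first two terms go to zero uniformly on compacts and the limit is $|\dot N|^2_{\mathbb{H}^n}$. In particular, for any $\epsilon>0$ and any compact, $(S^{T_{cos}}_b, b^{-1} d^{T_{cos}}_b)$ is $(1+\epsilon)$-bi-Lipschitz to $(\mathbb{H}^n, d_{\mathbb{H}^n})$ provided $b$ is large enough.

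Combining the two steps, for $a$ sufficiently large the renormalized $T$-levels $(S^T_a, b(a)^{-1} d^T_a)$ are $(K(a)^4(1+\epsilon))$-bi-Lipschitz to $(\mathbb{H}^n, d_{\mathbb{H}^n})$ on compact fundamental domains, whence $C'$-quasi-isometric to $(\pi_1(M),\mathbb{H}^n,d_{\mathbb{H}^n})$ for any $C'>C_0^4 =: C$; by the same estimate every compact-open limit point is $C$-bi-Lipschitz to $(\mathbb{H}^n,d_{\mathbb{H}^n})$. This proves 1). For 2), in the $2+1$ dimensional CMC and $k$-time cases, Proposition 3.3 and Corollary 3.4 give $\sup_{S^T_a} T_{cos}/a, \inf_{S^T_a} T_{cos}/a \to 1$, so $K(a)\to 1$; the bi-Lipschitz constant in Proposition 4.4 thus degenerates to $1$, which together with Step 2 upgrades the quasi-isometric statement to genuine compact-open convergence to $(\pi_1(M),\mathbb{H}^2,d_{\mathbb{H}^2})$. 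The main obstacle is controlling the cross term $b\langle \dot r, \dot N\rangle$ uniformly in $b$; this is where the $\Gamma_\tau$-cocompactness of $\Sigma$ combined with the $1$-Lipschitz bound on $\partial\Omega$ is essential, and it is precisely this geometric boundedness that makes the constant $C$ depend only on $\pi_1(M)$, as the subsequent remark asserts.
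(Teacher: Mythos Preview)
Your approach is essentially the paper's: sandwich $\delta^T_a$ between cosmological levels via the bi-Lipschitz control (Proposition~\ref{prop.mehdi456677}), use the barrier estimate (Proposition~\ref{propositionmehdi2}) to bound $K(a)$, and invoke the convergence of renormalized cosmological levels to $\mathbb{H}^n$. Two differences are worth noting.

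First, the paper does not reprove the convergence $(S^{T_{cos}}_b, b^{-1}d^{T_{cos}}_b)\to(\mathbb{H}^n,d_{\mathbb{H}^n})$; it simply cites \cite[Proposition~7.1]{bonsante1}. Your sketch of this step is the weakest part of your argument: the claim that ``$|\dot r|$ is bounded on a fundamental domain independently of $b$'' is not quite the right formulation, and the control of the cross term $2b\langle \dot r,\dot N\rangle$ does not follow from cocompactness of $\Sigma$ alone. What one actually uses (and what Bonsante proves) is $\langle \dot r,\dot N\rangle\geq 0$, which yields $|\dot r|^2\leq |\dot\alpha|^2$ and $b^2|\dot N|^2\leq |\dot\alpha|^2$ directly; the compact-open convergence then requires a more careful argument than the infinitesimal estimate you give.

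Second, the paper uses the intermediate inequalities from the \emph{proof} of Proposition~\ref{prop.mehdi456677}, namely
\[
\tfrac{a_-}{a_+}\,d^{T_{cos}}_{a_-}\;\leq\;\delta^T_a\;\leq\;d^{T_{cos}}_{a_+},
\]
rather than the cruder $K^4$-bi-Lipschitz statement you invoke; this yields a somewhat better constant. For part~2) the arguments coincide: once $K(a)\to 1$ (Proposition~\ref{propmehdi3}, Corollary~\ref{cormehdi3}), both routes give compact-open convergence.
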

\begin{proof}
Let $a>0$. Denote by  $a_{+}=\sup_{S^{T}_{a}}T_{cos}$ and by $a_{-}=\inf_{S^{T}_{a}}T_{cos}$. By Proposition \ref{prop.mehdi456677} we have that for every $x$ and $y$ in $X_{T_{cos}}$, $$\frac{a_{-}}{a_{+}}d^{T_{cos}}_{a_{-}}(x,y)\leq\delta^{T}_{a}(x,y)\leq d^{T_{cos}}_{a_{+}}(x,y)$$ So $$(\frac{a_{-}}{a_{+}})^{2}\frac{d^{T_{cos}}_{a_{-}}(x,y)}{a_{-}}\leq\frac{\delta^{T}_{a}(x,y)}{a_{+}}\leq \frac{d^{T_{cos}}_{a_{+}}(x,y)}{a_{+}}$$

1) The general case: by Proposition \ref{propositionmehdi2}, there exists a constant $C'$  such that  $\frac{a_{+}}{a_{-}}\leq C'$ for $a$ big enough. Together with  Proposition \cite[Proposition~7.1]{bonsante1}  we conclude that for $a$ big enough the distance $\delta^{T}_{a}$ is $C'$-quasi-isometric to the hyperbolic hyperbolic $d_{\mathbb{H}^{n}}$. In particular, all the limit points (for the compact open topology) of the family $(\delta^{T}_{a})_{a>0}$ are $C$-bi-Lipschitz to the hyperbolic distance $d_{\mathbb{H}^{n}}$ where $C$ is the constant depending only on $\Gamma$ given in Proposition \ref{propositionmehdi2}.

2) In the $2+1$ case: if $T$ is the $CMC$ time or the $k$-time then by Proposition \ref{propmehdi3} and the Corollary \ref{cormehdi3}, the constant $C$ is equal to one and hence  the family $(\delta^{T}_{a})_{a>0}$ converges in the compact open topology, when $a$ goes to infinity, to $d_{\mathbb{H}^{2}}$.
\end{proof}
This last proposition together with the fact that compact open convergence of $\Gamma$-metric spaces is stronger than the Gromov equivariant convergence conclude the proof of Theorem \ref{mehditheorem3}.

\section{Past convergence in (2+1)-de Sitter space-times}
In this section we will proof Theorem \ref{Theomehdidesianti} in de Sitter case. Let $M\simeq B(S)/\Gamma$ be a $2+1$ dimensional $MGHC$ future complete de Sitter space-time of hyperbolic type, where $B(S)\simeq (\Omega_{1},\mathfrak{g})$ is the associated hyperbolic $dS$-standard spacetime of dimension obtained by a Wick rotation from a flat regular domain $(\Omega,g)$. Let $(\lambda,\mu)$ be the measured geodesic lamination on $\mathbb{H}^{2}$ associated to $(\Omega,g)$. Let's denote respectively by $T_{cos}$ and $\mathcal{T}_{cos}$ the cosmological time of $(\Omega,g)$ and $(\Omega_{1},\mathfrak{g})$.

\begin{prop}
\label{propdesitterbelraouti}
The cosmological level $(\Gamma, S^{\mathcal{T}_{cos}}_{a}, d^{\mathcal{T}_{cos}}_{a})_{a>0}$ converge in the compact open topology, when $a$ goes to $0$, to $(\Gamma,\Sigma,d_{\Sigma})$ the real tree dual to the measured geodesic lamination $(\lambda,\mu)$.
\end{prop}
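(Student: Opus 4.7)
The plan is to reduce the de Sitter cosmological case to the flat cosmological case via the Wick rotation correspondence, exploiting the fact that the de Sitter and flat cosmological levels coincide set-theoretically and that their induced Riemannian metrics differ by an explicit conformal factor.

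First I would observe that the Wick rotation $g \rightsquigarrow \mathfrak{g}$ is performed along $\xi_{T_{cos}}$, so the gradient lines of $T_{cos}$ and $\mathcal{T}_{cos}$ coincide as curves and the space of gradient lines $X := X_{T_{cos}} = X_{\mathcal{T}_{cos}}$ is common to both pictures. Moreover the relation $\mathcal{T}_{cos} = \operatorname{argth}(T_{cos})$ gives $S^{\mathcal{T}_{cos}}_a = S^{T_{cos}}_{\tanh(a)}$ as subsets of $\Omega_1$. The tangent space to a cosmological level is $\langle \xi_{T_{cos}}\rangle^{\perp}$, on which the Wick rotation formula reads $\mathfrak{g} = \tfrac{1}{1-T_{cos}^2}\, g$. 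Evaluating on the level $\{T_{cos} = \tanh(a)\}$ yields
\[
\mathfrak{g}^{\mathcal{T}_{cos}}_a = \cosh^2(a)\, g^{T_{cos}}_{\tanh(a)},
\]
and therefore, passing to the associated length distances on $X$,
\[
d^{\mathcal{T}_{cos}}_a = \cosh(a)\, d^{T_{cos}}_{\tanh(a)},
\]
a relation which is manifestly $\Gamma$-equivariant.

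Next I would invoke the flat $(2+1)$ case. By Bonsante's analysis of the flat regular domain $(\Omega,g)$ (Proposition~7.6 and Proposition~7.8 of \cite{bonsante1}), the family $(\Gamma, S^{T_{cos}}_b, d^{T_{cos}}_b)_{b>0}$ converges in the $\Gamma$-equivariant compact open topology on $X$, as $b \to 0$, to the initial singularity $(\Gamma, \Sigma, d_\Sigma)$; and the Mess--Benedetti--Bonsante correspondence identifies $(\Sigma, d_\Sigma)$ with the real tree dual to the measured geodesic lamination $(\lambda,\mu)$ attached to $M$ via the Wick rotation.

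Combining both ingredients: as $a \to 0^+$, $\tanh(a) \to 0$ and $\cosh(a) \to 1$, so for any fixed $\mathbf{p}, \mathbf{q} \in X$,
\[
d^{\mathcal{T}_{cos}}_a(\mathbf{p}, \mathbf{q}) = \cosh(a)\, d^{T_{cos}}_{\tanh(a)}(\mathbf{p}, \mathbf{q}) \longrightarrow d_\Sigma(\mathbf{p}, \mathbf{q}),
\]
and uniformity on compact subsets of $X \times X$ is inherited from the flat case since the prefactor $\cosh(a)$ is globally close to $1$. The main obstacle is really only bookkeeping: one must check that the Wick rotation identifies $(\lambda,\mu)$ on both sides, that compact open convergence (and not only Gromov equivariant convergence) survives the $\cosh(a)$ rescaling, and that the flat cosmological time, although only $C^{1,1}$, is actually covered by Bonsante's convergence statements (as alluded to in the remarks following Proposition~\ref{propositionmehdi1}). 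Once these identifications are installed, the conclusion follows at once.
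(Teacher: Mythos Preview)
Your proposal is correct and follows essentially the same route as the paper: identify the de Sitter and flat cosmological gradient-line spaces via the Wick rotation, observe that $S^{\mathcal{T}_{cos}}_a = S^{T_{cos}}_{\tanh(a)}$ with the induced metrics differing by the conformal factor $\tfrac{1}{1-\tanh^2(a)}=\cosh^2(a)$, and then let $a\to 0$ so that the factor tends to $1$ and Bonsante's flat convergence $d^{T_{cos}}_b \to d_\Sigma$ (together with the Benedetti--Bonsante identification of $(\Sigma,d_\Sigma)$ with the dual tree of $(\lambda,\mu)$) yields the result. Your distance scaling $d^{\mathcal{T}_{cos}}_a=\cosh(a)\,d^{T_{cos}}_{\tanh(a)}$ is in fact the accurate one; the paper writes the metric factor $\tfrac{1}{1-\tanh^2(a)}$ in the distance relation, but since both tend to $1$ the argument is unaffected.
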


\begin{proof}
Note that the space of cosmological gradient lines of $(\Omega_{1},\mathfrak{g})$ is the same as the space of cosmological gradient lines of $(\Omega_{1},g)$. Let's denote it by $X_{cos}$. For every $a>0$, the distance $d^{\mathcal{T}_{cos}}_{a}$ of $S^{\mathcal{T}_{cos}}_{a}$ transported to $X_{cos}$ is also denoted by $d^{\mathcal{T}_{cos}}_{a}$.
\\

On the one hand, by Proposition \ref{propmehdi77} the distances $d^{\mathcal{T}_{cos}}_{a}$ and  $d^{T_{cos}}_{\tanh(a)}$ converge respectively in the compact open topology to the pseudo-distances $d^{\mathcal{T}_{cos}}_{0}$ and $d^{T_{cos}}_{0}$ on $X_{cos}$.
\\

On the other hand and for every $a>0$ we have: $d^{\mathcal{T}_{cos}}_{a}(x,y)=\frac{1}{1-\tanh^{2}(a)}d^{T_{cos}}_{\tanh(a)}(x,y)$. Thus, the distances $d^{\mathcal{T}_{cos}}_{a}$ converge  in the compact open topology, when $a$ goes to $0$, to the pseudo-distances $d^{T_{cos}}_{0}$. But the cleaning of $(X_{T_{cos}}, d^{T_{cos}}_{0})$ is isometric to $(\Sigma,d_{\Sigma})$, which is by \cite[Proposition~3.7.2]{benebonsan1} isometric to the real tree  dual to the measured geodesic lamination $(\lambda,\mu)$. So the $\Gamma$ metric spaces $(\Gamma,S^{\mathcal{T}_{cos}}_{a}, d^{\mathcal{T}_{cos}}_{a})_{a>0}$  converge, when $a$ goes to $0$, in the compact open topology to the real tree $(\Gamma,\Sigma,d_{\Sigma})$. Then the $\Gamma$ metric spaces $(\Gamma,S^{\mathcal{T}_{cos}}_{a}, d^{\mathcal{T}_{cos}}_{a})_{a>0}$ converge, when $a$ goes to $0$, in the Gromov equivariant topology to the real tree $(\Gamma,\Sigma,d_{\Sigma})$.

\end{proof}

\textbf{Proof of Theorem \ref{Theomehdidesianti} in the Sitter case}. 
Thanks to Proposition \ref{propositionmehdi1}, Proposition \ref{propdesitterbelraouti} and Remark \ref{jeudivac1}, one can reproduce the proof of Theorem \ref{theomehdi1} without any modification and proves Theorem \ref{Theomehdidesianti} in the de Sitter case.\fin

\section{Past convergence in (2+1)-anti de Sitter space-times}
In this section we will proof Theorem \ref{Theomehdidesianti} in the anti de Sitter case. Let $M\simeq \widetilde{M}/\Gamma$ be the tight past of a $2+1$ dimensional $MGHC$ anti de Sitter space-time, where $\widetilde{M}\simeq (\Omega,\mathfrak{g})$ is obtained by a Wick rotation from a flat regular domain $(\Omega,g)$. Let $(\lambda,\mu)$ be the measured geodesic lamination on $\mathbb{H}^{2}$ associated to $(\Omega,g)$. Let's denote respectively by $T_{cos}$ and $\mathcal{T}_{cos}$ the cosmological time of $(\Omega,g)$ and $(\Omega,\mathfrak{g})$.

\begin{prop}
\label{proplyon}
The cosmological level $(\Gamma, S^{\mathcal{T}_{cos}}_{a}, d^{\mathcal{T}_{cos}}_{a})_{a>0}$ converge in the compact open topology, when $a$ goes to $0$, to $(\Gamma,\Sigma,d_{\Sigma})$ the real tree dual to the measured geodesic lamination $(\lambda,\mu)$.
\end{prop}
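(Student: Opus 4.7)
The strategy mirrors the de Sitter case (Proposition \ref{propdesitterbelraouti}) very closely, replacing $\tanh$ and $\sinh$ by $\tan$ and $\sin$ according to the anti de Sitter Wick rotation formulas. First, since $\mathcal{T}_{cos} = \arctan(T_{cos})$ is a monotone reparametrization of $T_{cos}$, the cosmological flows of $(\Omega, g)$ and $(\Omega, \mathfrak{g})$ share the same space of gradient lines $X_{cos}$, and $S^{\mathcal{T}_{cos}}_a = S^{T_{cos}}_{\tan(a)}$ as subsets of $\Omega$. The tangent space to this common level is $\langle \xi_{T_{cos}}\rangle^{\perp}$, so the anti de Sitter Wick rotation formula $\mathfrak{g} = \frac{1}{1+T_{cos}^{2}}\, g$ on $\langle\xi_{T_{cos}}\rangle^{\perp}$ restricts on $S^{\mathcal{T}_{cos}}_a$ to
\[
\mathfrak{g}^{\mathcal{T}_{cos}}_a \;=\; \frac{1}{1+\tan^{2}(a)}\,g^{T_{cos}}_{\tan(a)} \;=\; \cos^{2}(a)\,g^{T_{cos}}_{\tan(a)}.
\]
Transported to $X_{cos}$ via the identification of levels with gradient lines, this yields the distance relation $d^{\mathcal{T}_{cos}}_a = \cos(a)\, d^{T_{cos}}_{\tan(a)}$.

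Next I would take the limit $a \to 0^{+}$. By Proposition \ref{propmehdi77} applied to the flat regular domain $(\Omega, g)$, the distances $d^{T_{cos}}_b$ converge in the compact open topology, as $b\to 0$, to a pseudo-distance $d^{T_{cos}}_0$ on $X_{cos}$ whose cleaning is isometric to the initial singularity $(\Sigma, d_\Sigma)$ of $(\Omega, g)$. Since $\tan(a) \to 0$ and $\cos(a) \to 1$ as $a \to 0$, it follows from the relation above that $d^{\mathcal{T}_{cos}}_a$ converges in the compact open topology to the same pseudo-distance $d^{T_{cos}}_0$. By \cite[Proposition~3.7.2]{benebonsan1}, $(\Sigma, d_\Sigma)$ is isometric to the real tree dual to the measured geodesic lamination $(\lambda,\mu)$ associated to $(\Omega,g)$; and because the Wick rotation is $\Gamma$-equivariant, $(\lambda,\mu)$ coincides with the measured geodesic lamination associated to $M$. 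Finally, since compact open convergence of $\Gamma$-metric spaces implies Gromov equivariant convergence, one concludes that $(\Gamma, S^{\mathcal{T}_{cos}}_a, d^{\mathcal{T}_{cos}}_a)$ converges in the Gromov equivariant topology, as $a \to 0$, to the real tree $(\Gamma, \Sigma, d_\Sigma)$.

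The proof is essentially routine once one has both the Wick rotation conformal factor on the levels and the flat-case convergence (Proposition \ref{propmehdi77}) at hand; the only mildly delicate point is checking that the identification of spaces of gradient lines is legitimate, but this is immediate from the monotonicity of $\arctan$ and the fact that the Wick rotation is performed along $\xi_{T_{cos}}$ itself. There is thus no substantive new obstacle compared to the de Sitter case.
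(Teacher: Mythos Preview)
Your proposal is correct and follows essentially the same approach as the paper's proof: identify the gradient-line spaces via $\mathcal{T}_{cos}=\arctan(T_{cos})$, use the Wick rotation conformal factor on the levels to relate $d^{\mathcal{T}_{cos}}_a$ to $d^{T_{cos}}_{\tan(a)}$, and then invoke Proposition~\ref{propmehdi77} and the identification of $(\Sigma,d_\Sigma)$ with the dual real tree. In fact your version is slightly more careful with the scaling (writing the distance factor $\cos(a)$ rather than the metric factor $\frac{1}{1+\tan^2(a)}$) and avoids the paper's typographical slip of $\tanh$ for $\tan$.
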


\begin{proof}
Note that the space of cosmological gradient lines of $(\Omega_{1},\mathfrak{g})$ is the same as the space of cosmological gradient lines of $(\Omega_{1},g)$. Let's denote it by $X_{cos}$. For every $a>0$, the distance $d^{\mathcal{T}_{cos}}_{a}$ of $S^{\mathcal{T}_{cos}}_{a}$ transported to $X_{cos}$ is also denoted by $d^{\mathcal{T}_{cos}}_{a}$.
\\

On the one hand, by Proposition \ref{propmehdi77}  the distances $d^{\mathcal{T}_{cos}}_{a}$ and  $d^{T_{cos}}_{\tanh(a)}$ converge respectively in the compact open topology to the pseudo-distances $d^{\mathcal{T}_{cos}}_{0}$ and $d^{T_{cos}}_{0}$ on $X_{cos}$.
\\

On the other hand and for every $a>0$ we have: $d^{\mathcal{T}_{cos}}_{a}(x,y)=\frac{1}{1+\tan^{2}(a)}d^{T_{cos}}_{\tanh(a)}(x,y)$. Thus, the distances $d^{\mathcal{T}_{cos}}_{a}$ converge  in the compact open topology, when $a$ goes to $0$, to the pseudo-distances $d^{T_{cos}}_{0}$. So the $\Gamma$ metric spaces $(\Gamma,S^{\mathcal{T}_{cos}}_{a}, d^{\mathcal{T}_{cos}}_{a})_{a>0}$ converge, when $a$ goes to $0$, in the Gromov equivariant topology to the real tree $(\Gamma,\Sigma,d_{\Sigma})$

\end{proof}

\textbf{Proof of Theorem \ref{Theomehdidesianti} in the anti Sitter case}.
Thanks to  Proposition \ref{propositionmehdi1}, Proposition \ref{proplyon} and Remark \ref{jeudivac2}, one can reproduce the proof of Theorem \ref{theomehdi1} without any modification and proves Theorem \ref{Theomehdidesianti} in the anti de Sitter case.\fin

\section{Asymptotic behavior in the Teichmüller space}
The aim object of this part is to proof Theorem \ref{theormehditeich1}. Let $S\simeq \mathbb{H}^{2}/\Gamma$ be a closed hyperbolic surface. Denote by $\operatorname{Teich}(S)$ the Teichmüller space of $S$. On $\operatorname{Teich}(S)$ consider the Teichmüller metric $d_{\operatorname{Teich}}$.
\begin{prop}
\label{proptheich}
Let $g_{1}$ and $g_{2}$ two Riemmannian metric on $S$ such that $(S,g_{1})$ is $K$-bilipchitz to $(S,g_{2})$. Then $d_{\operatorname{Teich}}(\left[g_{1}\right],\left[g_{2}\right])\leq\log K$
\end{prop}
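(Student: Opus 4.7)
The plan is to use the very definition of the Teichmüller metric as an infimum of quasiconformal dilatations, and to bound the dilatation of the identity map by $K^{2}$ using the bi-Lipschitz hypothesis.

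Recall that for two marked Riemann surface structures $[g_{1}]$, $[g_{2}]$ on $S$, the Teichmüller distance is
\[
d_{\operatorname{Teich}}([g_{1}],[g_{2}]) \;=\; \tfrac{1}{2}\,\log\, \inf_{f} K(f),
\]
where the infimum runs over all quasiconformal homeomorphisms $f:(S,[g_{1}])\to(S,[g_{2}])$ isotopic to the identity, and $K(f)$ denotes the quasiconformal dilatation of $f$. So it suffices to exhibit a single such map with $K(f)\le K^{2}$, and the identity map $\operatorname{id}:(S,g_{1})\to(S,g_{2})$ is the natural candidate.

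The key pointwise computation is the following: if $\operatorname{id}:(S,g_{1})\to(S,g_{2})$ is $K$-bi-Lipschitz, then at every point $p\in S$ and every nonzero tangent vector $v\in T_{p}S$ one has
\[
\frac{1}{K}\,|v|_{g_{1}}\;\le\;|v|_{g_{2}}\;\le\; K\,|v|_{g_{1}}.
\]
The quasiconformal dilatation of the identity at $p$ is the ratio of the largest to the smallest singular value of the differential seen as a linear map $(T_{p}S,g_{1})\to(T_{p}S,g_{2})$, i.e.
\[
K_{\operatorname{id}}(p)\;=\;\frac{\max_{v\ne 0}|v|_{g_{2}}/|v|_{g_{1}}}{\min_{v\ne 0}|v|_{g_{2}}/|v|_{g_{1}}}\;\le\;\frac{K}{1/K}\;=\;K^{2}.
\]
Taking the supremum over $p$ gives $K(\operatorname{id})\le K^{2}$, and since the identity is visibly isotopic to itself, plugging this into the definition of $d_{\operatorname{Teich}}$ yields
\[
d_{\operatorname{Teich}}([g_{1}],[g_{2}])\;\le\;\tfrac{1}{2}\log K^{2}\;=\;\log K.
\]

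There is no real obstacle here; the statement is essentially the textbook relation between bi-Lipschitz and quasiconformal constants for maps of Riemann surfaces. The only small care needed is to ensure that a $K$-bi-Lipschitz homeomorphism is automatically quasiconformal (almost everywhere differentiable by Rademacher, absolutely continuous on lines, with bounded distortion), which is standard and certainly applies to smooth metrics $g_{1},g_{2}$; and to remember the factor $\tfrac{1}{2}$ in the definition of $d_{\operatorname{Teich}}$, which is what turns the dilatation bound $K^{2}$ into the final bound $\log K$.
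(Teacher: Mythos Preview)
The paper states this proposition without proof, treating it as a standard fact about the Teichm\"uller metric; there is nothing to compare against. Your argument is the standard one and is correct: bound the pointwise dilatation of the identity map by $K^{2}$ using the two-sided Lipschitz inequality on tangent vectors, then apply the definition $d_{\operatorname{Teich}}=\tfrac{1}{2}\log\inf_{f} K(f)$. The only implicit assumption worth flagging is that the $K$-bi-Lipschitz map in the hypothesis is (or is isotopic to) the identity of $S$; in the paper's applications the comparison map is always the cosmological flow, which is indeed a diffeomorphism isotopic to the identity, so this is harmless here.
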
 

Let $(\lambda, \mu)$ be a measured geodesic lamination on $S$. Let $M$ be the unique flat (or de Sitter, or the tight past of anti de Sitter) $MGHC$ space-time of dimension $2+1$ associated to $(\lambda, \mu)$. Let $T_{cmc}$ and $T_{k}$ be respectively the $CMC$ time and the $k$ time of $\widetilde{M}$.  For each of the cosmological time, the $k$ time and the $CMC$ time, let us consider respectively the associated curves $a\mapsto \left[g^{T_{cos}}_{a}\right]$, $a\mapsto \left[g^{T_{k}}_{a}\right]$ and $a\mapsto \left[g^{T_{cmc}}_{a}\right]$ in the Teichmüller space $\operatorname{Teich}(S)$ of $S$. 
\begin{prop}\em {\textbf{The flat case}.}
\label{flatlast}
The curves $a\mapsto \left[g^{T_{k}}_{a}\right]$ and $a\mapsto \left[g^{T_{cmc}}_{a}\right]$ converge when $a$ goes to infinity to the hyperbolic
structure $\mathbb{H}^{2}/\Gamma$.
\end{prop}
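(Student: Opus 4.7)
The plan is to reduce the convergence of $[g^{T_{k}}_{a}]$ and $[g^{T_{cmc}}_{a}]$ to the Benedetti--Bonsante convergence of the cosmological curve, which in the flat case identifies with the grafting curve $\operatorname{gra}_{\lambda/b}(S)$ and tends to the hyperbolic structure $\mathbb{H}^{2}/\Gamma$ as $b\to +\infty$ (see the introduction). For each $a>0$, set
\[
b_{k}(a):=\sup_{S^{T_{k}}_{a}}T_{cos},\qquad K_{k}(a):=\frac{\sup_{S^{T_{k}}_{a}}T_{cos}}{\inf_{S^{T_{k}}_{a}}T_{cos}},
\]
and define $b_{cmc}(a)$, $K_{cmc}(a)$ analogously. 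Since the $k$-levels and $CMC$-levels are convex $\Gamma$-invariant Cauchy surfaces, Proposition~\ref{prop.mehdi456677} directly provides a $K_{k}(a)^{4}$-bi-Lipschitz identification between $(S,g^{T_{k}}_{a})$ and $(S,g^{T_{cos}}_{b_{k}(a)})$, and similarly a $K_{cmc}(a)^{4}$-bi-Lipschitz identification between $(S,g^{T_{cmc}}_{a})$ and $(S,g^{T_{cos}}_{b_{cmc}(a)})$.

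By Proposition~\ref{propmehdi3} one has $b_{k}(a)/a\to 1$ and $K_{k}(a)\to 1$ as $a\to +\infty$, and by Corollary~\ref{cormehdi3} the analogous conclusions hold for $b_{cmc}(a)$ and $K_{cmc}(a)$. Combined with Proposition~\ref{proptheich}, this yields
\[
d_{\operatorname{Teich}}\!\left([g^{T_{k}}_{a}],\,[g^{T_{cos}}_{b_{k}(a)}]\right)\leq 4\log K_{k}(a)\longrightarrow 0 \quad\text{as } a\to +\infty,
\]
and similarly in the $CMC$ case. Since $b_{k}(a), b_{cmc}(a)\to +\infty$, the known convergence $[g^{T_{cos}}_{b}]=[\operatorname{gra}_{\lambda/b}(S)]\to [\mathbb{H}^{2}/\Gamma]$ in $\operatorname{Teich}(S)$ forces $[g^{T_{cos}}_{b_{k}(a)}]$ and $[g^{T_{cos}}_{b_{cmc}(a)}]$ to converge to the hyperbolic structure, and the triangle inequality for $d_{\operatorname{Teich}}$ concludes.

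The only non-routine ingredient is the asymptotic rigidity $K_{k}(a),K_{cmc}(a)\to 1$, which was already established in Section~3 through a Maximum Principle argument comparing Gauss curvatures. Once this is available, transferring the Riemannian bi-Lipschitz comparison of Proposition~\ref{prop.mehdi456677} into a Teichmüller-distance bound via Proposition~\ref{proptheich} is immediate. The geometric picture is that near future infinity, the $k$-time and $CMC$-time level sets become indistinguishable up to conformal class from cosmological levels, whose Teichmüller behavior is known explicitly from \cite{benebonsan1}.
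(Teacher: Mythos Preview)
Your proof is correct and follows essentially the same route as the paper: apply Proposition~\ref{prop.mehdi456677} to get a $K^{4}$-bi-Lipschitz comparison with a cosmological level, use Proposition~\ref{propmehdi3} and Corollary~\ref{cormehdi3} to see that $K\to 1$, translate this into a Teichm\"uller distance estimate via Proposition~\ref{proptheich}, and conclude with the Benedetti--Bonsante identification of the cosmological curve with the grafting curve. Your version is in fact slightly more precise than the paper's, since you explicitly track the target level $b_{k}(a)=\sup_{S^{T_{k}}_{a}}T_{cos}$ (the paper loosely writes $g^{T_{cos}}_{a}$), but the argument is the same.
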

\begin{proof}
On the one hand and by Proposition \ref{prop.mehdi456677},  $g^{T_{k}}_{a}$ (respectively $g^{T_{cmc}}_{a}$) is $C_{a}^{4}$ bi-Lipschitz to $g^{T_{cos}}_{a}$ for every $a>0$. Moreover by  Proposition \ref{propmehdi3} and the Corollary \ref{cormehdi3}, $C_{a}$ goes to one when $a$ goes to $\infty$. Thus by Proposition \ref{proptheich} we have that  $d_{\operatorname{Teich}}(\left[g^{T_{k}}_{a}\right],\left[g^{T_{cos}}_{a}\right])$ (respectively $d_{\operatorname{Teich}}(\left[g^{T_{cmc}}_{a}\right],\left[g^{T_{cos}}_{a}\right])$) goes to $0$ when $a$ goes to $\infty$.

On the other hand, by a result of Bonsante-Benedetti \cite{benebonsan1}, the cosmological curve $a\mapsto \left[g^{T_{cos}}_{a}\right]$ corresponds to the grafting  associated
to the measured geodesic lamination $(\lambda,\mu)$. The grafting curve converges when times goes to $+\infty$, to the hyperbolic
structure $\mathbb{H}^{2}/\Gamma$. Hence $\left[g^{T_{k}}_{a}\right]$ (respectively  $\left[g^{T_{cmc}}_{a}\right]$) converges when $a$ goes to infinity to to the hyperbolic
structure $\mathbb{H}^{2}/\Gamma$.
\end{proof}



\begin{prop}\em {\textbf{The de Sitter case}.}
The limit points, when time goes to $+\infty$, of the curve $a\mapsto \left[g^{T_{k}}_{a}\right]$ are at bounded Teichmüller distance from  the hyperbolic structure $\mathbb{H}^{2}/\Gamma$.
\end{prop}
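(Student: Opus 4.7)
The strategy closely parallels the flat case (Proposition~\ref{flatlast}), replacing the flat bi-Lipschitz control by its de Sitter counterpart and exploiting the Wick rotation to transport the Teichmüller question back to the flat cosmological curve.

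First I would apply Proposition~\ref{mehdipropantidesitteralg} to $S = S^{T_{k}}_{a}$. Writing $\alpha_{a} = \inf_{S^{T_{k}}_{a}} \mathcal{T}_{cos}$ and $\beta_{a} = \sup_{S^{T_{k}}_{a}} \mathcal{T}_{cos}$, the cosmological flow identifies $(S^{T_{k}}_{a}, \mathfrak{g}^{T_{k}}_{a})$ with the cosmological level $(S^{\mathcal{T}_{cos}}_{\beta_{a}}, \mathfrak{g}^{\mathcal{T}_{cos}}_{\beta_{a}})$ in a $K_{a}^{4}$-bi-Lipschitz way, where $K_{a} = \sinh(\beta_{a})/\sinh(\alpha_{a})$. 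Proposition~\ref{propbelraoutidesitteralg} supplies the two facts needed to control $K_{a}$: both $\alpha_{a}$ and $\beta_{a}$ tend to $+\infty$ as $a \to +\infty$ (they are asymptotic to $\operatorname{argcoth}(\sqrt{(a^{2}+1)/a^{2}})$), and their difference $\beta_{a} - \alpha_{a}$ stays bounded by an absolute constant $C$. The $\sinh$ addition formula then yields
\[
K_{a} = \cosh(\beta_{a}-\alpha_{a}) + \coth(\alpha_{a}) \sinh(\beta_{a}-\alpha_{a}),
\]
which is bounded by $e^{C}$ in the limit since $\coth(\alpha_{a}) \to 1$. Proposition~\ref{proptheich} therefore provides a uniform bound on $d_{\operatorname{Teich}}([g^{T_{k}}_{a}],[\mathfrak{g}^{\mathcal{T}_{cos}}_{\beta_{a}}])$.

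Next, I would identify the cosmological curve inside $\operatorname{Teich}(S)$. By the Wick rotation formula $\mathfrak{g}^{\mathcal{T}_{cos}}_{t} = \frac{1}{1-\tanh^{2}(t)}\, g^{T_{cos}}_{\tanh(t)}$ on the spacelike tangent directions, these two Riemannian metrics on the Cauchy surface are conformally equivalent and thus represent the same point in $\operatorname{Teich}(S)$. By Benedetti--Bonsante the flat cosmological curve satisfies $[g^{T_{cos}}_{b}] = \operatorname{gra}_{\lambda/b}(S)$; since $\tanh(\beta_{a}) \to 1$ as $a \to +\infty$, one concludes $[\mathfrak{g}^{\mathcal{T}_{cos}}_{\beta_{a}}] \to \operatorname{gra}_{\lambda}(S)$ in Teichmüller space. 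Combined with the previous paragraph, every accumulation point of $a \mapsto [g^{T_{k}}_{a}]$ lies in a bounded Teichmüller neighbourhood of $\operatorname{gra}_{\lambda}(S)$, hence at bounded distance from the hyperbolic structure $\mathbb{H}^{2}/\Gamma$.

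The main obstacle, compared with the flat case, is that $K_{a}$ does not tend to $1$: in the flat setting the ratio $\sup T_{cos}/\inf T_{cos} \to 1$ forces the bi-Lipschitz constant to $1$ and one obtains genuine convergence in $\operatorname{Teich}(S)$. Here one only controls the additive difference $\beta_{a}-\alpha_{a}$, so the argument degrades from \emph{convergence} to \emph{boundedness}; moreover the de Sitter cosmological curve itself no longer reaches the hyperbolic structure but only the grafting metric $\operatorname{gra}_{\lambda}(S)$. Converting the additive control of $\beta_{a}-\alpha_{a}$ into the multiplicative control of $K_{a}$ via the $\sinh$ addition formula is the short but essential computational step, and the rest is bookkeeping assembling the bi-Lipschitz bound, the Teichmüller estimate of Proposition~\ref{proptheich}, and the Benedetti--Bonsante identification of the cosmological curve with the grafting family.
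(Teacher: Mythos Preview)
Your proposal is correct and follows essentially the same route as the paper: combine the bi-Lipschitz control of Proposition~\ref{mehdipropantidesitteralg} with the barrier estimate of Proposition~\ref{propbelraoutidesitteralg} to bound the Teichm\"uller distance between $[g^{T_k}_a]$ and the cosmological curve, then use that the latter converges to the grafting metric $\operatorname{gra}_{\lambda}(S)$. You make explicit two steps the paper leaves implicit---the $\sinh$ addition formula turning the additive bound on $\beta_a-\alpha_a$ into a multiplicative bound on $K_a$, and the Wick rotation identifying $[\mathfrak{g}^{\mathcal{T}_{cos}}_{t}]$ with $[g^{T_{cos}}_{\tanh t}]$---but the overall architecture is the same.
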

\begin{proof}
On the one hand and by Propositions \ref{mehdipropantidesitteralg}, \ref{propbelraoutidesitteralg} we have that $d_{\operatorname{Teich}}(\left[g^{T_{k}}_{a}\right],\left[g^{T_{cos}}_{a}\right])\leq \log(3-H_{0})$ where $H_{0}$ is the constant given in the proof of Proposition \ref{propbelraoutidesitteralg}.

On the other hand $\left[g^{T_{cos}}_{a}\right]$, goes to the grafting metric $\operatorname{gra}_{\lambda}(S)$ when time goes to $+\infty$. Hence the limit points, when $a$ goes to infinity, of $\left[g^{T_{k}}_{a}\right]$ stay at $\log(3-H_{0})$ Teichmüller distance from the grafting metric $\operatorname{gra}_{\lambda}(S)$.
\end{proof}



\nocite{*}

\bibliographystyle{plain}

\bibliography{bibliographiemehdiarticle2}

\end{document}